\shorttitle{Implicit Renewal Theory on Trees} 
\newcommand{\boldC}{\Pi}
\def\Indicator{\mathop{\hskip0pt{1}}\nolimits}
\begin{document}

\title{Implicit Renewal Theory and Power Tails on Trees} 

\authorone[Columbia University]{Predrag R. Jelenkovi\'c } 

\addressone{Department of Electrical Engineering, Columbia University, New York, NY 10027 \\ Supported by the NSF, grant no. CMMI-1131053} 

\authortwo[Columbia University]{Mariana Olvera-Cravioto}

\addresstwo{Department of Industrial Engineering and Operations Research, Columbia University, New York, NY 10027 \\ Supported by the NSF, grant no. CMMI-1131053}


\begin{abstract}
We extend Goldie's (1991) Implicit Renewal Theorem to enable the analysis of recursions on weighted branching trees. We illustrate the developed method by deriving the power tail asymptotics of the distributions of the solutions $R$ to
$$R \stackrel{\mathcal{D}}{=} \sum_{i=1}^N C_i R_i + Q, \qquad R \stackrel{\mathcal{D}}{=} \left( \bigvee_{i=1}^N C_i R_i \right) \vee Q,$$
and similar recursions, where $(Q, N, C_1, C_2, \dots)$ is a nonnegative random vector with  $N  \in \{0, 1, 2, 3, \dots\} \cup \{\infty\}$, and $\{R_i\}_{i\in \mathbb{N}}$ are iid copies of $R$, independent of $(Q, N, C_1, C_2, \dots)$; here $\vee$ denotes the maximum operator.
\end{abstract}

\keywords{Implicit renewal theory; weighted branching processes;  multiplicative cascades; stochastic recursions; power laws; large deviations; stochastic fixed point equations} 

\ams{60H25}{60J80;60F10;60K05} 

\section{Introduction}

This paper is motivated by the study of the nonhomogeneous linear recursion 
\begin{equation} \label{eq:IntroLinear} 
R \stackrel{\mathcal{D}}{=} \sum_{i=1}^N C_i R_i + Q,
\end{equation}
where $(Q, N, C_1, C_2, \dots)$ is a nonnegative random vector with $N \in \mathbb{N} \cup \{\infty\}$, $\mathbb{N} = \{0, 1, 2, 3, \dots\}$, $P(Q >0) > 0$, and $\{R_i\}_{i \in \mathbb{N}}$ is a sequence of iid random variables, independent of $(Q, N, C_1, C_2, \dots)$, having the same distribution as $R$. This recursion appeared recently in the stochastic analysis of Google's PageRank algorithm, see \cite{Volk_Litv_08, Jel_Olv_09} and the references therein for the latest work in the area. These types of weighted recursions, also studied in the literature on weighted branching processes \cite{Rosler_93} and branching random walks \cite{Biggins_77}, are found in the probabilistic analysis of other algorithms as well \cite{Ros_Rus_01, Nei_Rus_04}, e.g., Quicksort algorithm \cite{Fill_Jan_01}. 

In order to study the preceding recursion in its full generality we extend the implicit renewal theory of Goldie \cite{Goldie_91} to cover recursions on trees. The extension of Goldie's theorem is presented in Theorem~\ref{T.NewGoldie} of Section~\ref{S.Renewal}. One of the observations that allows this extension is that an appropriately constructed measure on a weighted branching tree is a renewal measure, see Lemma \ref{L.RenewalMeasure} and equation \eqref{eq:RenewalMeasure}. In the remainder of the paper we apply the newly developed framework to analyze a number of linear and non-linear stochastic recursions on trees, starting with \eqref{eq:IntroLinear}. Note that the majority of the work in the rest of the paper goes into the application of the main theorem to specific problems. 

In this regard, in Section~\ref{S.LinearRec}, we first construct an explicit solution \eqref{eq:ExplicitConstr} to \eqref{eq:IntroLinear} on a weighted branching tree and then provide sufficient conditions for the finiteness of moments and the uniqueness of this solution in Lemmas \ref{L.Moments_R} and \ref{L.Convergence}, respectively. Furthermore, it is worth noting that our moment estimates are explicit, see Lemma~\ref{L.GeneralMoment}, which may be of independent interest. Then, the main result, which characterizes the power-tail behavior of $R$ is presented in Theorem \ref{T.LinearRecursion}. In addition, for integer power exponent ($\alpha \in \{ 1, 2, 3, \dots\}$) the asymptotic tail behavior can be explicitly computed as stated in Corollary \ref{C.explicit}. Furthermore, for non integer $\alpha$, Lemma \ref{L.Alpha_Moments} yields an explicit bound on the tail behavior of $R$.   Related work in the literature of weighted branching processes (WBPs) for the case when $N = \infty$ and $Q, \{C_i\}$ are nonnegative deterministic constants can be found in \cite{Rosler_93} (see Theorem 5), and more recently, for real valued constants, in \cite{Alsm_Rosl_05}. However, these deterministic assumptions fall outside of the scope of this paper; for more details see the remarks after Theorem~\ref{T.LinearRecursion} in Section~\ref{SS.MainLinear}.

Next, we show how our technique can be applied to study the tail asymptotics of the solution to the critical, $E\left[ \sum_{i=1}^N C_i \right] = 1$, homogeneous linear equation 
\begin{equation} \label{eq:IntroLinearHomog} 
R \stackrel{\mathcal{D}}{=} \sum_{i=1}^N C_i R_i,
\end{equation}
where $(N, C_1, C_2, \dots)$ is a nonnegative random vector with $N \in \mathbb{N} \cup \{\infty\}$ and $\{ R_i\}_{i\in \mathbb{N}}$ is a sequence of iid random variables independent of $(N, C_1, C_2, \dots)$ having the same distribution as $R$. This type of recursion has been studied to a great extent under a variety of names, including branching random walks and multiplicative cascades. Our work is more closely related to the results of \cite{Liu_00} and \cite{Iksanov_04}, where the conditions for power-tail asymptotics of the distribution of $R$ with power exponent $\alpha>1$ were derived.  In Theorem \ref{T.LinearHomog} of Section \ref{SS.MainLinear} we provide an alternative derivation of Theorem 2.2 in \cite{Liu_00} and Proposition 7 in \cite{Iksanov_04}. Furthermore, we note that our method yields a more explicit characterization of the power-tail proportionality constant, see Corollary~\ref{C.explicitHom}. For the full description of the set of solutions to \eqref{eq:IntroLinearHomog} see the very recent work in \cite{Als_Big_Mei_10}.  For additional references on weighted branching processes and multiplicative cascades see \cite{Alsm_Kuhl_07, Liu_00, Liu_98, Way_Will_95, Nei_Rus_04} and the references therein. For earlier historical references see \cite{Kah_Pey_76, Holl_Ligg_81, Durr_Ligg_83}.

As an additional illustration of the newly developed framework, in Section \ref{S.MaxRec} we study the recursion 
\begin{equation} \label{eq:IntroMaximum}
R \stackrel{\mathcal{D}}{=} \left(\bigvee_{i=1}^N C_i R_i \right) \vee Q,
\end{equation}
where $(Q, N, C_1, C_2, \dots)$ is a nonnegative random vector with $N \in \mathbb{N} \cup \{\infty\}$, \linebreak $P\left(Q >0 \right)>0$ and $\{R_i\}_{i\in \mathbb{N}}$ is a sequence of iid random variables independent of $(Q, N, C_1, C_2, \dots)$ having the same distribution as $R$. We characterize the tail behavior of $P(R > x)$ in Theorem \ref{T.MaximumRecursion}. Similarly to the homogeneous linear case, this recursion was previously studied in \cite{Alsm_Rosl_08} under the assumption that $Q \equiv 0$, $N = \infty$, and the $\{C_i\}$ are real valued deterministic constants. 
The more closely related case of $Q \equiv 0$ and $\{C_i \} \geq 0$ being random was studied earlier in \cite{Jag_Ros_04}.
Furthermore, these max-type stochastic recursions appear in a wide variety of applications, ranging from the average case analysis of algorithms to statistical physics; see \cite{Aldo_Band_05} for a recent survey.

We conclude the paper with a brief discussion of other non-linear recursions that could be studied using the developed techniques, including the solution to
$$R \stackrel{\mathcal{D}}{=} \left(\bigvee_{i=1}^N C_i R_i \right) + Q.$$
The majority of the proofs are postponed to Section~\ref{S.Proofs}.

\section{Model description} \label{S.ModelDescription}

First we construct a random tree $\mathcal{T}$. We use the notation $\emptyset$ to denote the root node of $\mathcal{T}$, and $A_n$, $n \geq 0$, to denote the set of all individuals in the $n$th generation of $\mathcal{T}$, $A_0 = \{\emptyset\}$. Let $Z_n$ be the number of individuals in the $n$th generation, that is, $Z_n = |A_n|$, where $| \cdot |$ denotes the cardinality of a set; in particular, $Z_0 = 1$. 

Next, let $\mathbb{N}_+ = \{1, 2, 3, \dots\}$ be the set of positive integers and let $U = \bigcup_{k=0}^\infty (\mathbb{N}_+)^k$ be the set of all finite sequences ${\bf i} = (i_1, i_2, \dots, i_n)$, where by convention $\mathbb{N}_+^0 = \{ \emptyset\}$ contains the null sequence $\emptyset$. To ease the exposition, for a sequence ${\bf i} = (i_1, i_2, \dots, i_k) \in U$ we write ${\bf i}|n = (i_1, i_2, \dots, i_n)$, provided $k \geq n$, and  ${\bf i}|0 = \emptyset$ to denote the index truncation at level $n$, $n \geq 0$. Also, for ${\bf i} \in A_1$ we simply use the notation ${\bf i} = i_1$, that is, without the parenthesis. Similarly, for ${\bf i} = (i_1, \dots, i_n)$ we will use $({\bf i}, j) = (i_1,\dots, i_n, j)$ to denote the index concatenation operation, if ${\bf i} = \emptyset$, then $({\bf i}, j) = j$. 

We iteratively construct the tree as follows. Let $N$ be the number of individuals born to the root node $\emptyset$, $N_\emptyset = N$, and let $\{N_{\bf i} \}_{{\bf i} \in U}$ be iid copies of $N$. Define now 
\begin{align} 
A_1 &= \{ i \in \mathbb{N}_+: 1 \leq i \leq N \}, \notag \\
A_n &= \{ (i_1, i_2, \dots, i_n) \in U: (i_1, \dots, i_{n-1}) \in A_{n-1}, 1 \leq i_n \leq N_{(i_1, \dots, i_{n-1})} \}. \label{eq:AnDef}
\end{align}
It follows that the number of individuals $Z_n = |A_n|$ in the $n$th generation, $n \geq 1$, satisfies the branching recursion 
$$Z_{n} = \sum_{{\bf i} \in A_{n-1}} N_{\bf i}.$$ 

Now, we construct the weighted branching tree $\mathcal{T}_{Q,C}$ as follows. The root node $\emptyset$ is assigned a vector $(Q_\emptyset, N_\emptyset, C_{(\emptyset, 1)}, C_{(\emptyset, 2)}, \dots) = (Q, N, C_1, C_2, \dots)$ with $N \in \mathbb{N} \cup \{\infty\}$ and $P(Q > 0) > 0$; $N$ determines the number of nodes in the first generation of $\mathcal{T}$ according to \eqref{eq:AnDef}. Each node in the first generation is then assigned an iid copy $(Q_i, N_i, C_{(i,1)}, C_{(i, 2)}, \dots)$ of the root vector and the $\{N_i\}$ are used to define the second generation in $\mathcal{T}$ according to \eqref{eq:AnDef}. 
In general, for $n\ge 2$, to each node ${\bf i} \in A_{n-1}$, we assign an iid copy 
$(Q_{\bf i}, N_{\bf i}, C_{({\bf i},1)}, C_{({\bf i}, 2)}, \dots)$ of the root vector and construct 
$A_{n} = \{({\bf i}, i_{n}): {\bf i} \in A_{n-1}, 1 \leq i_{n} \leq N_{\bf i}\}$;  the vectors $(Q_{\bf i}, N_{\bf i}, C_{({\bf i},1)}, C_{({\bf i}, 2)},  \dots)$, ${\bf i} \in A_{n-1}$ are chosen independently of all the 
previously assigned vectors  $(Q_{\bf j}, N_{\bf j}, C_{({\bf j},1)}, C_{({\bf j}, 2)}, \dots)$, ${\bf j} \in A_{k}, 0\le k\le n-2$.
For each node in $\mathcal{T}_{Q,C}$ we also define the weight $\boldC_{(i_1,\dots,i_n)}$ via the recursion
$$ \boldC_{i_1} =C_{i_1}, \qquad \boldC_{(i_1,\dots,i_n)} = C_{(i_1,\dots, i_n)} \boldC_{(i_1,\dots,i_{n-1})}, \quad n \geq 2,$$
where $\boldC =1$ is the weight of the root node. Note that the weight $\boldC_{(i_1,\dots, i_n)}$ is equal to the product of all the weights $C_{(\cdot)}$ along the branch leading to node $(i_1, \dots, i_n)$, as depicted in Figure \ref{F.Tree}.  
In some places, e.g. in the following section, the value of $Q$ may be of no importance, and thus we will consider a 
weighted branching tree defined by the smaller vector $(N, C_1, C_2, \dots)$.
This tree can be obtained form $\mathcal{T}_{Q,C}$ by simply disregarding the values for $Q_{(\cdot)}$ and is denoted by $\mathcal{T}_C$.

\begin{center}
\begin{figure}[h,t]
\begin{picture}(430,160)(0,0)
\put(0,0){\includegraphics[scale = 0.8, bb = 0 510 500 700, clip]{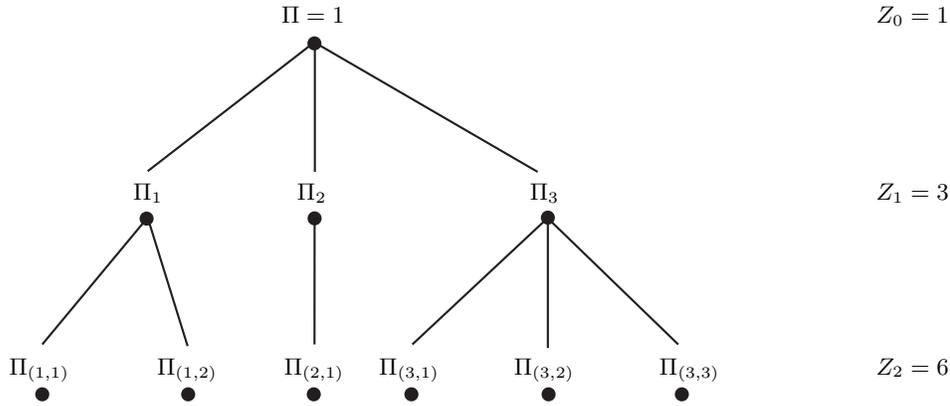}}
\put(125,150){\small $\boldC = 1$}
\put(69,83){\small $\boldC_{1}$}
\put(131,83){\small $\boldC_{2}$}
\put(219,83){\small $\boldC_{3}$}
\put(22,17){\small $\boldC_{(1,1)}$}
\put(78,17){\small $\boldC_{(1,2)}$}
\put(126,17){\small $\boldC_{(2,1)}$}
\put(162,17){\small $\boldC_{(3,1)}$}
\put(213,17){\small $\boldC_{(3,2)}$}
\put(268,17){\small $\boldC_{(3,3)}$}
\put(350,150){\small $Z_0 = 1$}
\put(350,83){\small $Z_1 = 3$}
\put(350,17){\small $Z_2 = 6$}
\end{picture}
\caption{Weighted branching tree}\label{F.Tree}
\end{figure}
\end{center}

Studying the tail behavior of the solutions to recursions and fixed point equations embedded in this weighted branching tree is the objective of this paper.

\section{Implicit renewal theorem on trees} \label{S.Renewal}

In this section we present an extension of Goldie's Implicit Renewal Theorem \cite{Goldie_91} to weighted branching trees. The observation that facilitates this generalization is the following lemma which shows that a certain measure on a tree is actually a product measure; a similar measure was used in a different context in \cite{Biggins_Kyprianou_77}. Its proof is given in Section~\ref{SS.ImplicitProofs} for completeness.  Throughout the paper we use the standard convention $0^\alpha \log 0 = 0$ for all $\alpha > 0$.

\begin{lem} \label{L.RenewalMeasure}
Let  $\mathcal{T}_{C}$ be the weighted branching tree defined by the nonnegative vector $(N, C_1, C_2, \dots)$, where $N \in \mathbb{N} \cup \{\infty\}$.  For any $n \in \mathbb{N}$ and ${\bf i} \in A_n$,
let $V_{{\bf i}} = \log \boldC_{\bf i}$. For $\alpha > 0$ define the measure
$$\mu_n(dt) = e^{\alpha t} E\left[  \sum_{{\bf i} \in A_n} \Indicator(V_{{\bf i}} \in dt )  \right], \quad n = 1, 2, \dots,$$
and let $\eta(dt) = \mu_1(dt)$. 
Suppose that there exists $j \geq 1$ with $P(N\ge j,C_j>0)~>~0$ such that the measure $P(\log C_j\in du, C_j > 0, N\ge j)$ is nonarithmetic, \linebreak $0 < E\left[ \sum_{i=1}^N C_i^\alpha \log C_i  \right] ~< ~\infty$ and $E\left[ \sum_{i=1}^N C_i^\alpha \right] = 1$. Then, $\eta(\cdot)$ is a nonarithmetic probability measure on $\mathbb{R}$ that places no mass at $-\infty$ and has mean
$$\int_{-\infty}^\infty u\, \eta(du) = E\left[ \sum_{j=1}^N C_j^\alpha \log C_j  \right] .$$
Furthermore, $\mu_n(dt) = \eta^{*n}(dt)$, where $\eta^{*n}$ denotes the $n$th convolution of $\eta$ with itself.
\end{lem}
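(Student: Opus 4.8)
The plan is to verify the claimed properties of $\eta$ one at a time, and then establish the convolution identity $\mu_n = \eta^{*n}$ by induction on $n$ using the recursive self-similar structure of the weighted branching tree. First I would check that $\eta$ is a probability measure: by definition $\eta(\mathbb{R}) = E\left[\sum_{i=1}^N e^{\alpha V_i}\right] = E\left[\sum_{i=1}^N C_i^\alpha\right]$, since $V_i = \log \mathbf{\Pi}_i = \log C_i$ for ${\bf i} = i \in A_1$; the hypothesis $E\left[\sum_{i=1}^N C_i^\alpha\right] = 1$ gives total mass one. Note that because of the convention $C_i^\alpha = e^{\alpha V_i} = 0$ when $C_i = 0$ (i.e. $V_i = -\infty$), those terms contribute nothing, so $\eta$ places no mass at $-\infty$. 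The mean is immediate from the definition: $\int u\,\eta(du) = E\left[\sum_{i=1}^N e^{\alpha V_i} V_i\right] = E\left[\sum_{j=1}^N C_j^\alpha \log C_j\right]$, which lies in $(0,\infty)$ by hypothesis (using the convention $0^\alpha \log 0 = 0$ for the vanishing weights). For nonarithmeticity: if $\eta$ were concentrated on a lattice $d\mathbb{Z}$, then in particular the restriction coming from the $j$th summand, which up to the factor $C_j^\alpha$ is the law $P(\log C_j \in du,\, C_j > 0,\, N \ge j)$, would have to be supported on that lattice; since $P(N \ge j, C_j > 0) > 0$ this restriction is a genuine nonzero measure, and its being nonarithmetic by hypothesis contradicts lattice support. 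Hence $\eta$ is nonarithmetic.

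For the convolution identity, the key is to write, for ${\bf i} = (i_1,\dots,i_n) \in A_n$, the decomposition $V_{\bf i} = V_{i_1} + \widetilde V_{(i_2,\dots,i_n)}$, where $V_{i_1} = \log C_{i_1}$ depends only on the first-generation data at the root, and $\widetilde V_{(i_2,\dots,i_n)} = \log\left(\mathbf{\Pi}_{\bf i}/\mathbf{\Pi}_{i_1}\right)$ is the analogous log-weight computed in the subtree rooted at $i_1$. The construction of $\mathcal{T}_C$ makes this subtree an independent copy of the whole tree, independent across distinct $i_1 \in A_1$ and independent of $(N, C_1, C_2, \dots)$ at the root. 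Therefore, conditioning on the root vector,
\begin{align*}
\mu_n(dt) &= e^{\alpha t}\, E\left[\sum_{i_1=1}^N \ \sum_{(i_2,\dots,i_n)} \Indicator\bigl(V_{i_1} + \widetilde V_{(i_2,\dots,i_n)} \in dt\bigr)\right] \\
&= E\left[\sum_{i_1=1}^N e^{\alpha V_{i_1}} \int_{\mathbb{R}} \mu_{n-1}^{(i_1)}(dt - s)\,\Indicator(V_{i_1} \in ds)\right],
\end{align*}
where $\mu_{n-1}^{(i_1)}$ is the measure $\mu_{n-1}$ built from the subtree at $i_1$; by the i.i.d. subtree structure its expectation equals $\mu_{n-1}$, which by the induction hypothesis equals $\eta^{*(n-1)}$. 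Taking the outer expectation over the root vector and recalling that $E\left[\sum_{i_1=1}^N e^{\alpha V_{i_1}} \Indicator(V_{i_1} \in ds)\right] = \eta(ds)$, this collapses to $\mu_n(dt) = \int_{\mathbb{R}} \eta^{*(n-1)}(dt-s)\,\eta(ds) = \eta^{*n}(dt)$. The base case $n=1$ is the definition $\mu_1 = \eta$.

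The main obstacle is making the subtree decomposition and the interchange of summation/expectation rigorous when $N$ can be infinite and the generations $A_n$ are random (countably) infinite sets. I would handle this by working with nonnegative integrands throughout so that Tonelli's theorem applies without integrability concerns, and by being careful that the event $\{{\bf i} \in A_n\}$ factors as $\{i_1 \in A_1 \text{ at the root}\} \cap \{(i_2,\dots,i_n) \in A_{n-1} \text{ in subtree } i_1\}$, which is exactly how $A_n$ is defined in \eqref{eq:AnDef}. One should also confirm that the convention $0^\alpha \log 0 = 0$ together with $C_i = 0 \Rightarrow V_i = -\infty \Rightarrow e^{\alpha V_i} = 0$ consistently removes the degenerate branches from every sum, so that all measures involved live on $\mathbb{R}$ rather than $[-\infty,\infty)$; this is what guarantees $\eta$ and hence each $\eta^{*n}$ places no mass at $-\infty$.
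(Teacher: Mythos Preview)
Your proposal is correct and matches the paper's proof in all essential points: the verifications that $\eta$ is a nonarithmetic probability measure with the stated mean and no mass at $-\infty$ are identical, and the convolution identity is established by the same induction-plus-conditioning scheme justified by Tonelli. The only cosmetic difference is the direction of the peel in the induction step: the paper writes $V_{({\bf i},j)} = V_{\bf i} + \log C_{({\bf i},j)}$ for ${\bf i} \in A_n$ and conditions on the filtration $\mathcal{F}_n$ of the first $n$ generations (so the last step contributes the factor $\eta$), whereas you split $V_{\bf i} = V_{i_1} + \widetilde V_{(i_2,\dots,i_n)}$ and condition on the root vector, invoking the i.i.d.\ subtree structure (so the first step contributes $\eta$); both routes give the same recursion $\mu_n = \eta * \mu_{n-1}$.
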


We now present a generalization of Goldie's Implicit Renewal Theorem \cite{Goldie_91} that will enable the analysis of recursions on weighted branching trees. Note that except for the independence assumption, the random variable $R$ and the vector $(N, C_1, C_2, \dots)$ are arbitrary, and therefore the applicability of this theorem goes beyond the recursions that we study here. Throughout the paper we use $g(x) \sim f(x)$ as $x \to \infty$ to denote $\lim_{x \to \infty} g(x)/f(x) = 1$.

\begin{thm} \label{T.NewGoldie}
Let $(N, C_1, C_2, \dots)$ be a nonnegative random vector, where $N \in \mathbb{N} \cup \{\infty\}$. 
Suppose that there exists $j \geq 1$ with $P(N\ge j,C_j>0)>0$ such that the measure $P(\log C_j\in du, C_j > 0, N\ge j)$ is nonarithmetic.
Assume further that \linebreak 
$0 <  E\left[ \sum_{j=1}^N C_j^\alpha \log C_j  \right] < \infty$, $E\left[ \sum_{j=1}^N C_j^\alpha \right] = 1$, $E\left[ \sum_{j=1}^N C_j^\gamma \right] < \infty$ for some $0 \leq \gamma < \alpha$, and that $R$ is independent of $(N, C_1, C_2, \dots)$ with $E[R^\beta] < \infty$ for any $0< \beta < \alpha$. If 
\begin{equation} \label{eq:Goldie_condition}
\int_0^\infty \left| P(R > t) - E\left[ \sum_{j=1}^N \Indicator (C_j R > t ) \right] \right| t^{\alpha-1} dt < \infty,
\end{equation}
then
$$P(R > t) \sim H t^{-\alpha}, \qquad t \to \infty,$$
where $0 \leq H < \infty$ is given by
\begin{align*}
H &= \frac{1}{E\left[ \sum_{j=1}^N C_j^\alpha \log C_j  \right] } \int_{0}^\infty v^{\alpha-1} \left( P(R > v) - E\left[ \sum_{j=1}^{N} \Indicator(C_{j} R > v ) \right]    \right) dv .
\end{align*}
\end{thm}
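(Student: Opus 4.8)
The plan is to reduce the statement to the classical Implicit Renewal Theorem of Goldie, using Lemma~\ref{L.RenewalMeasure} to replace the tree structure with a genuine renewal measure. First I would introduce the measure $\mu_n(dt) = e^{\alpha t} E[\sum_{\mathbf{i}\in A_n}\Indicator(V_{\mathbf{i}}\in dt)]$ and recall from the lemma that, under the stated hypotheses (nonarithmeticity of $P(\log C_j\in du,C_j>0,N\ge j)$, the condition $E[\sum_j C_j^\alpha]=1$, and $0<E[\sum_j C_j^\alpha\log C_j]<\infty$), the measure $\eta=\mu_1$ is a nonarithmetic probability measure on $\mathbb{R}$ with finite positive mean $m = E[\sum_j C_j^\alpha\log C_j]$, and that $\mu_n = \eta^{*n}$. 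Hence $U(dt) := \sum_{n=0}^\infty \mu_n(dt)$ is a genuine renewal measure associated to a random walk with i.i.d.\ increments distributed as $\eta$.

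Next I would set up the Goldie-style change of variables. Define $r(t) = e^{\alpha t} P(R > e^t)$ and the ``one-step'' perturbation
$$
g(t) = e^{\alpha t}\left( P(R>e^t) - E\Big[\sum_{j=1}^N \Indicator(C_j R > e^t)\Big]\right).
$$
The core identity to establish is a renewal equation of the form
$$
r(t) = g(t) + \int_{-\infty}^\infty r(t-s)\,\eta(ds),
$$
which should follow by conditioning on the first generation: writing $P(R>e^t)$ in terms of the event that some $C_j R_j$ exceeds $e^t$ plus the correction term, iterating, and using the product/convolution structure $\mu_n=\eta^{*n}$ from Lemma~\ref{L.RenewalMeasure} together with the independence of $R$ from $(N,C_1,C_2,\dots)$. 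Iterating this renewal equation $n$ times and letting $n\to\infty$ gives the representation $r = g * U$, i.e.
$$
e^{\alpha t} P(R > e^t) = \int_{-\infty}^\infty g(t-s)\, U(ds),
$$
provided the tail contributions $\int r(t-s)\,\mu_n(ds)$ vanish as $n\to\infty$. This last point is where the auxiliary moment assumptions $E[\sum_j C_j^\gamma]<\infty$ for some $\gamma<\alpha$ and $E[R^\beta]<\infty$ for all $\beta<\alpha$ are needed: they let one bound $\mu_n$ shifted mass and show $e^{\alpha t}P(R>e^t)$ cannot retain mass ``at infinity along the walk,'' exactly as in Goldie's Lemma~9.3.

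Once the renewal representation is in place, the Key Renewal Theorem (in the nonarithmetic case, which is guaranteed by the nonarithmeticity of $\eta$ shown in Lemma~\ref{L.RenewalMeasure}) yields
$$
\lim_{t\to\infty} e^{\alpha t} P(R>e^t) = \frac{1}{m}\int_{-\infty}^\infty g(u)\,du = \frac{1}{m}\int_0^\infty v^{\alpha-1}\Big(P(R>v) - E\Big[\sum_{j=1}^N \Indicator(C_j R > v)\Big]\Big)\,dv,
$$
after undoing the substitution $v=e^u$; the absolute integrability of the right-hand side is precisely hypothesis~\eqref{eq:Goldie_condition}, which also guarantees $g$ is directly Riemann integrable once one checks it is the difference of monotone-ish pieces with the requisite decay. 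Translating back, $P(R>t)\sim H t^{-\alpha}$ with $H$ as stated, and $H\ge 0$ follows since $P(R>t)t^{-\alpha}\to H$ forces the limit to be nonnegative (even though $g$ itself need not be signed), while $H<\infty$ is immediate from \eqref{eq:Goldie_condition}.

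I expect the main obstacle to be the rigorous justification that the tree-indexed quantities collapse to the one-dimensional renewal equation and that the remainder terms vanish: specifically, establishing $r = g*U$ requires (i) carefully conditioning on the entire first generation vector $(N,C_1,C_2,\dots)$ rather than a single offspring, handling the possibility $N=\infty$, and (ii) controlling $\int_{-\infty}^\infty r(t-s)\,\eta^{*n}(ds)$ uniformly so that it tends to $0$; the boundedness of $r$ is not obvious a priori and must be extracted from $E[R^\beta]<\infty$ for $\beta<\alpha$ via a Markov-inequality argument, combined with the submultiplicative-type bound on $E[\sum_{\mathbf{i}\in A_n}\boldC_{\mathbf{i}}^\gamma]$ coming from $E[\sum_j C_j^\gamma]<\infty$. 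The remaining direct-Riemann-integrability check on $g$, and the verification that $g$ is locally bounded and eventually dominated by an integrable monotone function, is routine but must be done; it mirrors Goldie's treatment and relies again on \eqref{eq:Goldie_condition} together with the finiteness of $E[R^\beta]$.
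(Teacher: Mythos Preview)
Your overall strategy matches the paper's closely: telescope to obtain $r(t) = (g * \nu_{n-1})(t) + \delta_n(t)$, use Lemma~\ref{L.RenewalMeasure} to recognize $\nu = \sum_n \eta^{*n}$ as a genuine renewal measure, control the remainder via the moment hypotheses (the paper does this by choosing $\beta<\alpha$ with $\rho_\beta<1$, which follows from convexity and the assumption $E[\sum_j C_j^\gamma]<\infty$ for some $\gamma<\alpha$), and then invoke the Key Renewal Theorem. Where your plan diverges from the paper---and where there is a genuine gap---is in your claim that hypothesis~\eqref{eq:Goldie_condition} ``also guarantees $g$ is directly Riemann integrable once one checks it is the difference of monotone-ish pieces with the requisite decay,'' and that this check is ``routine.'' It is not, and the paper does \emph{not} establish direct Riemann integrability of $g$.

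The problem is that \eqref{eq:Goldie_condition} gives only $g\in L_1$. The two pieces $e^{\alpha t}P(R>e^t)$ and $e^{\alpha t}E\bigl[\sum_j \Indicator(C_jR>e^t)\bigr]$ are \emph{not} separately integrable in the interesting case $H>0$ (each tends to a positive constant), so you cannot decompose $g$ into monotone integrable summands. And $L_1$ alone does not imply dRi: you would need $\sum_n \sup_{[n,n+1]}|g|<\infty$, which nothing in the hypotheses provides. Hence the Key Renewal Theorem cannot be applied to $r=g*\nu$ directly, and your conclusion $\lim_{t\to\infty}e^{\alpha t}P(R>e^t)=H$ is not justified as stated.

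The paper circumvents this exactly as Goldie does for $N=1$: introduce the smoothing operator $\breve f(t)=\int_{-\infty}^t e^{-(t-u)}f(u)\,du$ and pass to the smoothed identity $\breve r(t)=(\breve g*\nu_{n-1})(t)+\breve\delta_n(t)$. By Goldie's Lemma~9.1, $g\in L_1$ implies $\breve g$ \emph{is} directly Riemann integrable; after letting $n\to\infty$ (controlling $\breve\delta_n$ with the $\beta$-moment argument you correctly identified), the Key Renewal Theorem for two-sided walks yields $\breve r(t)\to H$. But this is only the averaged statement $e^{-t}\int_0^{e^t}v^\alpha P(R>v)\,dv\to H$; a separate Tauberian step (Goldie's Lemma~9.3, which exploits the monotonicity of $v\mapsto P(R>v)$) is then required to upgrade it to $P(R>t)\sim Ht^{-\alpha}$. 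Both the smoothing and the de-smoothing are absent from your plan, and without them the argument does not close.
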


{\sc Remarks:} (i) As pointed out in \cite{Goldie_91}, the statement of the theorem only has content when $R$ has infinite moment of order $\alpha$, since otherwise the constant $H$ is zero. (ii) Similarly as in \cite{Goldie_91}, this theorem can be generalized to incorporate negative weights $\{C_i\}$ at the expense of additional technical complications. However, when the $\{C_i\} \geq 0$ and $R$ is real-valued, one can use exactly the same proof to derive the asymptotics of $P(-R > t)$; we omit the statement here since our applications do not require it. (iii) When the $\{\log C_i\}$ are lattice valued, a similar version of the theorem can be derived by using the corresponding Renewal Theorem for lattice random walks. (iv) It appears, as noted in \cite{Goldie_91}, that some of the early ideas of applying renewal theory to study the power tail asymptotics of autoregressive processes (perpetuities) is due to \cite{Kesten_73} and \cite{Grincevicius_75}. The proof given below follows the corresponding proof in \cite{Goldie_91}.


%

\begin{proof}[Proof of Theorem \ref{T.NewGoldie}]
Let  $\mathcal{T}_{C}$ be the weighted branching tree defined by the nonnegative vector $(N, C_1, C_2, \dots)$.  For each ${\bf i} \in A_n$ and all $k \leq n$ define $V_{{\bf i}|k} = \log \boldC_{{\bf i}|k}$; note that $\boldC_{{\bf i}|k}$ is independent of $N_{{\bf i}|k}$ but not of $N_{{\bf i}|s}$ for any $0\leq s \leq k-1$. Also note that ${\bf i}|n = {\bf i}$ since ${\bf i} \in A_n$. Let  $\mathcal{F}_k$, $k \geq 1$, denote the $\sigma$-algebra generated by $\left\{ (N_{\bf i}, C_{({\bf i}, 1)}, C_{({\bf i}, 2)}, \dots) : {\bf i} \in A_j, 0 \leq j \leq k-1 \right\}$, and let $\mathcal{F}_0 = \sigma(\emptyset, \Omega)$, $\boldC_{{\bf i}|0} \equiv 1$.  Assume also that $R$ is independent of the entire weighted tree, $\mathcal{T}_{C}$. Then, for any $t \in \mathbb{R}$, we can write $P(R > e^t)$ via a telescoping sum as follows (note that all the expectations in \eqref{eq:telescoping} are finite by Markov's inequality and \eqref{eq:PiMoments})
\begin{align}
&P(R > e^t) \notag \\
&= \sum_{k=0}^{n-1} \left( E\left[ \sum_{({\bf i}|k) \in A_{k}} \Indicator(\boldC_{{\bf i}|k} R > e^t ) \right] - E\left[ \sum_{({\bf i}|k+1) \in A_{k+1}} \Indicator(\boldC_{{\bf i}|k+1} R > e^t) \right]  \right) \label{eq:telescoping} \\
&\hspace{5mm} + E\left[ \sum_{({\bf i}|n) \in A_n} \Indicator(\boldC_{{\bf i}|n} R > e^t ) \right] \notag \\
&= \sum_{k=0}^{n-1}  E\left[ \sum_{({\bf i}|k) \in A_{k}} \left( \Indicator (\boldC_{{\bf i}|k} R > e^t) -  \sum_{j=1}^{N_{{\bf i}|k}} \Indicator(\boldC_{{\bf i}|k} C_{({\bf i}|k,j)} R > e^t) \right) \right]  \notag \\
&\hspace{5mm} + E\left[ \sum_{({\bf i}|n) \in A_n} \Indicator(\boldC_{{\bf i}|n} R > e^t ) \right] \notag \\
&= \sum_{k=0}^{n-1} E\left[ \sum_{({\bf i}|k) \in A_{k}} E\left[ \left.   \Indicator( R > e^{t-V_{{\bf i}|k}} ) -  \sum_{j=1}^{N_{{\bf i}|k}} \Indicator( C_{({\bf i}|k,j)} R > e^{t-V_{{\bf i}|k}} )  \right| \mathcal{F}_k  \right] \right]  \notag \\
&\hspace{5mm} + E\left[ \sum_{({\bf i}|n) \in A_n} \Indicator(\boldC_{{\bf i}|n} R > e^t ) \right] .
\end{align}

Now, define the measures $\mu_n$ according to Lemma \ref{L.RenewalMeasure} and let
$$\nu_n(dt) = \sum_{k=0}^n \mu_k(dt), \qquad g(t) = e^{\alpha t} \left( P(R > e^t) - E\left[ \sum_{j=1}^{N} \Indicator(C_{j} R > e^{t} ) \right]    \right),$$
$$r(t) = e^{\alpha t} P(R > e^t) \qquad \text{and} \qquad \delta_n(t) = e^{\alpha t} E\left[ \sum_{({\bf i}|n) \in A_n} \Indicator( \boldC_{{\bf i}|n} R > e^t ) \right].$$

Recall that $R$ and $(N_{{\bf i}|k}, C_{({\bf i}|k,1)}, C_{({\bf i}|k,2)}, \dots)$ are independent of $\mathcal{F}_k$, from where it follows that
$$E\left[ \left.   \Indicator( R > e^{t-V_{{\bf i}|k}} ) -  \sum_{j=1}^{N_{{\bf i}|k}} \Indicator( C_{({\bf i}|k,j)} R > e^{t-V_{{\bf i}|k}} )  \right| \mathcal{F}_k  \right]  = e^{\alpha(V_{{\bf i}|k}-t)} g\left( t-V_{{\bf i}|k}  \right). $$
Then, for any $t \in \mathbb{R}$ and $n \in \mathbb{N}$, 
\begin{align*}
r(t) &= \sum_{k=0}^{n-1} E\left[ \sum_{({\bf i}|k) \in A_k} e^{\alpha V_{{\bf i}|k}} g(t-V_{{\bf i}|k}) \right] + \delta_n(t) = (g*\nu_{n-1})(t) + \delta_n(t).
\end{align*}

Next, define the operator $\breve{f}(t) = \int_{-\infty}^t e^{-(t-u)} f(u) \, du$ and note that 
\begin{equation} \label{eq:SmoothOperator}
\breve{r}(t) = (\breve{g}* \nu_{n-1})(t) + \breve{\delta}_n(t).
\end{equation} 


Now, we will show that one can let $n \to \infty$ in the preceding identity. To this end, let $\eta(du) = \mu_1(du)$, and note that by Lemma \ref{L.RenewalMeasure} $\eta(\cdot)$ is a nonarithmetic probability measure on $\mathbb{R}$ that places no mass at $-\infty$ and has mean,
$$\mu \triangleq \int_{-\infty}^\infty u\, \eta(du) = E\left[ \sum_{j=1}^N C_j^\alpha \log C_j  \right] > 0 .$$
Moreover, by Lemma \ref{L.RenewalMeasure},
\begin{equation} \label{eq:RenewalMeasure}
\nu(dt) \triangleq  \sum_{k=0}^\infty e^{\alpha t} E\left[ \sum_{({\bf i}|k) \in A_{k}}  \Indicator(V_{{\bf i}|k} \in dt )   \right]   = \sum_{k=0}^\infty \eta^{*k}(dt)
\end{equation}
is its renewal measure. Since $\mu \neq 0$, then $(|f|*\nu)(t) < \infty$ for all $t$ whenever $f$ is directly Riemann integrable. By \eqref{eq:Goldie_condition} we know that $g \in L_1$, so by Lemma 9.1 from \cite{Goldie_91}, $\breve{g}$ is directly Riemann integrable, resulting in $(|\breve{g}|*\nu)(t) < \infty$ for all $t$.  Thus, $(|\breve{g}|*\nu)(t) =  E\left[ \sum_{k=0}^\infty  \sum_{({\bf i}|k) \in A_{k}} e^{\alpha V_{{\bf i}|k}} |\breve{g}(t-V_{{\bf i}|k})| \right]  < \infty$, which implies that $E\left[ \sum_{k=0}^\infty  \sum_{({\bf i}|k) \in A_{k}} e^{\alpha V_{{\bf i}|k}} \breve{g}(t-V_{{\bf i}|k}) \right]$ exists and, by Fubini's theorem, 
\begin{align*}
(\breve{g}*\nu)(t) &= E\left[ \sum_{k=0}^\infty  \sum_{({\bf i}|k) \in A_{k}} e^{\alpha V_{{\bf i}|k}} \breve{g}(t-V_{{\bf i}|k}) \right]  \\
&= \sum_{k=0}^\infty E\left[ \sum_{({\bf i}|k) \in A_{k}} e^{\alpha V_{{\bf i}|k}} \breve{g}(t-V_{{\bf i}|k}) \right]  = \lim_{n\to \infty}  (\breve{g}*\nu_n)(t).
\end{align*}

To see that $\breve{\delta}_n(t) \to 0$ as $n \to \infty$ for all fixed $t$, note that from the assumptions $0 < E\left[ \sum_{j=1}^N C_j^\alpha \log C_j \right] < \infty$, $E\left[ \sum_{j=1}^N C_j^\alpha \right] = 1$, and $E\left[ \sum_{j=1}^N C_j^\gamma \right] < \infty$ for some $0 \leq \gamma < \alpha$, there exists $0 < \beta <\alpha$ such that $E\left[ \sum_{j=1}^N C_j^{\beta} \right] < 1$ (by convexity). Then, for such $\beta$, 
\begin{align}
\breve{\delta}_n(t) &= \int_{-\infty}^t e^{-(t-u)} e^{\alpha u} E\left[ \sum_{({\bf i}|n) \in A_n} \Indicator\left( \boldC_{{\bf i}|n} R > e^{u} \right) \right]  du \notag \\
&\leq e^{(\alpha-\beta)t}  E\left[ \sum_{({\bf i}|n) \in A_n}  \int_{-\infty}^t e^{ \beta u} \Indicator\left(\boldC_{{\bf i}|n}  R > e^u \right)  du \, \right]  \notag \\
&= e^{(\alpha-\beta) t}  E\left[ \sum_{({\bf i}|n) \in A_n}  \int_{-\infty}^{\min\{t, \log(\boldC_{{\bf i}|n} R)\}}  e^{\beta u}   du \, \right]  \notag \\
&\leq \frac{e^{(\alpha-\beta) t}}{\beta} E\left[ \sum_{({\bf i}|n) \in A_n}   (\boldC_{{\bf i}|n} R)^{\beta}   \right]  .  \label{eq:delta_error}
\end{align}
It remains to show that the expectation in \eqref{eq:delta_error} converges to zero as $n \to \infty$. First note that from the independence of $R$ and $\mathcal{T}_C$, 
$$ E\left[ \sum_{({\bf i}|n) \in A_n}   (\boldC_{{\bf i}|n} R)^{\beta}   \right]  = E[R^\beta]  E\left[ \sum_{({\bf i}|n) \in A_n}   (\boldC_{{\bf i}|n})^{\beta}   \right],$$
where $E[R^\beta] < \infty$, for $0 < \beta < \alpha$. For the expectation involving $\boldC_{{\bf i}|n}$ condition on $\mathcal{F}_{n-1}$ and use the independence of $(N_{{\bf i}|n-1}, C_{({\bf i}|n-1, 1)}, C_{({\bf i}|n-1, 2)}, \dots)$ from $\mathcal{F}_{n-1}$ as follows
\begin{align}
E\left[   \sum_{({\bf i}|n) \in A_n}  (\boldC_{{\bf i}|n})^{\beta}  \right] &= E\left[   \sum_{({\bf i}|n-1) \in A_{n-1}} E\left[ \left. \sum_{j=1}^{N_{{\bf i}|n-1}}   (\boldC_{{\bf i}|n-1})^{\beta} C_{({\bf i}|n-1,j)}^{\beta} \right| \mathcal{F}_{n-1} \right] \right] \notag \\
&= E\left[   \sum_{({\bf i}|n-1) \in A_{n-1}} (\boldC_{{\bf i}|n-1})^{\beta} E\left[ \left. \sum_{j=1}^{N_{{\bf i}|n-1}}    C_{({\bf i}|n-1,j)}^{\beta} \right| \mathcal{F}_{n-1} \right] \right] \notag \\
&= E\left[  \sum_{j=1}^N    C_j^{\beta}  \right]  E\left[   \sum_{({\bf i}|n-1) \in A_{n-1}} (\boldC_{{\bf i}|n-1})^{\beta} \right] \notag \\
&= \left( E\left[     \sum_{j=1}^{N}  C_{j}^{\beta}   \right] \right)^n \qquad \text{(iterating $n-1$ times)}. \label{eq:PiMoments}
\end{align}
Since $E\left[     \sum_{j=1}^{N}  C_{j}^{\beta}   \right] < 1$, then the above converges to zero as $n \to \infty$. Hence, the preceding arguments allow us to pass $n \to \infty$ in \eqref{eq:SmoothOperator}, and obtain
$$\breve{r}(t) = (\breve{g}*\nu)(t).$$

Now, by the key renewal theorem for two-sided random walks, see Theorem 4.2 in \cite{Ath_McD_Ney_78},
$$e^{- t} \int_{0}^{e^t} v^{\alpha} P(R > v) \, dv = \breve{r}(t) \to \frac{1}{\mu} \int_{-\infty}^\infty \breve{g}(u) \, du \triangleq H, \qquad t \to \infty.$$
Clearly, $H \geq 0$ since the left-hand side of the preceding equation is positive, and thus, by 
Lemma 9.3 in \cite{Goldie_91},
$$P(R > t) \sim H t^{-\alpha}, \qquad t \to \infty.$$

Finally, 
\begin{align*}
H &=  \frac{1}{\mu} \int_{-\infty}^\infty \int_{-\infty}^u e^{-(u-t)} g(t) \, dt \, du \\
&=  \frac{1}{\mu} \int_{-\infty}^\infty e^{ t} g(t)  \int_{t}^\infty  e^{- u} \, du \, dt \\
&= \frac{1}{ \mu} \int_{-\infty}^\infty  g(t) \, dt \\
&= \frac{1}{ \mu} \int_{-\infty}^\infty e^{\alpha t} \left( P(R > e^t) - E\left[ \sum_{j=1}^{N} \Indicator(C_{j} R > e^{t} ) \right]    \right) dt \\
&= \frac{1}{ \mu} \int_{0}^\infty v^{\alpha-1} \left( P(R > v) - E\left[ \sum_{j=1}^{N} \Indicator(C_{j} R > v ) \right]    \right) dv. 
\end{align*}
\end{proof}

\vspace{-15pt}

\section{The linear recursion: $R = \sum_{i=1}^N C_i R_i + Q$} \label{S.LinearRec}

Motivated by the information ranking problem on the Internet, e.g. Google's PageRank algorithm \cite{Jel_Olv_09, Volk_Litv_08}, in this section we apply the implicit renewal theory for trees developed in the previous section to the following linear recursion:
\begin{equation} \label{eq:Linear}
R \stackrel{\mathcal{D}}{=} \sum_{i=1}^N C_i R_i + Q,
\end{equation}
where $(Q,N, C_1, C_2, \dots)$ is a nonnegative random vector with $N \in \mathbb{N} \cup \{\infty\}$, \linebreak $P(Q > 0 ) > 0$, and $\{R_i\}_{i\in \mathbb{N}}$ is a sequence of iid random variables independent of $(Q, N, C_1, C_2, \dots)$ having the same distribution as $R$. Note that the power tail of $R$ in the critical homogeneous case $(Q \equiv 0)$ was previously studied in \cite{Liu_00} and \cite{Iksanov_04}. In Section~\ref{SS.Homogeneous} we will give an alternative derivation of those results using our method and will provide pointers to the appropriate literature. 

As for the nonhomogeneous case, the first result we need to establish is the existence and finiteness of a solution to \eqref{eq:Linear}. For the purpose of existence we will provide an explicit construction of the solution $R$ to \eqref{eq:Linear} on a tree.  Note that such constructed $R$ will be the main object of study of this section. 

Recall that throughout the paper the convention is to denote the random vector associated to the root node  $\emptyset$ by $(Q, N, C_1, C_2, \dots) \equiv (Q_\emptyset, N_\emptyset, C_{(\emptyset, 1)}, C_{(\emptyset, 2)}, \dots)$. 

We now define the process
\begin{equation} \label{eq:W_k}
W_0 =  Q, \quad W_n =  \sum_{{\bf i} \in A_n} Q_{{\bf i}} \boldC_{{\bf i}}, \qquad n \geq 1,
\end{equation}
on the weighted branching tree $\mathcal{T}_{Q, C}$, as constructed in Section \ref{S.ModelDescription}. Define the process $\{R^{(n)}\}_{n \geq 0}$ according to
\begin{equation} \label{eq:R_nDef}
R^{(n)} = \sum_{k=0}^n W_k , \qquad n \geq 0,
\end{equation}
that is, $R^{(n)}$ is the sum of the weights of all the nodes on the tree up to the $n$th generation. It is not hard to see that $R^{(n)}$ satisfies the recursion
\begin{equation} \label{eq:LinearRecSamplePath} 
R^{(n)} = \sum_{j=1}^{N_\emptyset} C_{(\emptyset,j)} R^{(n-1)}_{j} + Q_{\emptyset} = \sum_{j=1}^{N} C_{j} R^{(n-1)}_{j} + Q, \qquad n \geq 1,
\end{equation}
where $\{R_{j}^{(n-1)}\}$ are independent copies of $R^{(n-1)}$ corresponding to the tree starting with individual $j$ in the first generation and ending on the $n$th generation; note that $R_j^{(0)} = Q_j$. Similarly, since the tree structure repeats itself after the first generation, $W_n$ satisfies
\begin{align}
W_n &= \sum_{{\bf i} \in A_n} Q_{{\bf i}} \boldC_{{\bf i}} \notag\\
&= \sum_{k = 1}^{N_\emptyset} C_{(\emptyset,k)}  
\sum_{(k,\dots,i_n) \in A_n} Q_{(k,\dots,i_n)} \prod_{j=2}^n C_{(k,\dots,i_j)}  \notag\\
&\stackrel{\mathcal{D}}{=} \sum_{k=1}^N C_k W_{(n-1),k},\label{eq:WnRec}
\end{align}
where $\{W_{(n-1),k}\}$ is a sequence of iid random variables independent of $(N, C_1, C_2, \dots)$ and having the same distribution as $W_{n-1}$. 

Next, define the random variable $R$ according to
\begin{equation} \label{eq:ExplicitConstr}
R \triangleq \lim_{n\to \infty} R^{(n)} = \sum_{k=0}^\infty W_k ,
\end{equation}
where the limit is properly defined by \eqref{eq:R_nDef} and monotonicity. Hence, it is easy to verify, by applying monotone convergence in \eqref{eq:LinearRecSamplePath}, that $R$ must solve
$$R = \sum_{j=1}^{N_\emptyset} C_{(\emptyset,j)} R_{j}^{(\infty)} + Q_\emptyset = \sum_{j=1}^N C_{j} R_j^{(\infty)} + Q ,$$
where $\{R_j^{(\infty)}\}_{j \in \mathbb{N}}$ are iid, have the same distribution as $R$, and are independent of $(Q, N, C_1, C_2, \dots)$. 

The derivation provided above implies in particular the existence of a solution in distribution to \eqref{eq:Linear}. Moreover, under additional technical conditions, $R$ is the unique solution under iterations as we will define and show in the following section. The constructed $R$, as defined in \eqref{eq:ExplicitConstr}, is the main object of study in the remainder of this section.

\subsection{Moments of $W_n$ and $R$} \label{SS.MomentsLinear}

In this section we derive estimates for the moments of $W_n$ and $R$. We start by stating a lemma about the moments of a sum of random variables.  The proofs of Lemmas \ref{L.Alpha_Moments}, \ref{L.MomentSmaller_1} and \ref{L.GeneralMoment} are given in Section~\ref{SS.MomentsProofs}. 

\begin{lem} \label{L.Alpha_Moments}
For any $k \in \mathbb{N} \cup \{\infty\}$ let $\{C_i \}^k_{i=1}$ be a sequence of nonnegative random variables and let $\{Y_i\}_{i = 1}^k$ be a sequence of nonnegative iid random variables, independent of the $\{C_i\}$, having the same distribution as $Y$. For $\beta > 1$ set $p = \lceil \beta \rceil \in \{2, 3, 4, \dots\}$, and if $k = \infty$ assume that $\sum_{i=1}^\infty C_i Y_i < \infty$ a.s. Then,
$$E\left[ \left( \sum_{i=1}^k C_i Y_i \right)^\beta - \sum_{i=1}^k (C_iY_i)^\beta \right] \leq \left( E\left[ Y^{p-1} \right] \right)^{\beta/(p-1)} E\left[ \left(\sum_{i=1}^k C_i \right)^\beta  \right].$$
\end{lem}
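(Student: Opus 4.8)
The plan is to reduce to the finite case $k < \infty$ by monotone convergence and then expand the $\beta$-th power using an inequality that separates the "diagonal'' terms $(C_iY_i)^\beta$ from the "cross'' terms. First, assume $k<\infty$; the case $k=\infty$ follows by applying the bound to the partial sums $\sum_{i=1}^m C_iY_i$ and letting $m\to\infty$ via monotone convergence, using the a.s.\ finiteness assumption on both sides. With $p=\lceil\beta\rceil$ so that $p-1 < \beta \le p$ and $p-1\ge 1$, the key deterministic inequality I would invoke is that for nonnegative reals $x_1,\dots,x_k$,
$$\Bigl(\sum_{i=1}^k x_i\Bigr)^\beta - \sum_{i=1}^k x_i^\beta \le \Bigl(\sum_{i=1}^k x_i\Bigr)^\beta - \Bigl(\max_i x_i\Bigr)^\beta \cdot(\text{something})\dots$$
More precisely, I would use the subadditivity-type bound: writing $s_i = x_1+\dots+x_i$, telescoping gives $s_k^\beta - \sum_i x_i^\beta = \sum_{i\ge2}\bigl(s_i^\beta - s_{i-1}^\beta - x_i^\beta\bigr)$, and for $\beta\ge1$ each bracket is bounded using the mean value theorem by $\beta\, x_i\,(s_i^{\beta-1} - x_i^{\beta-1})\le \beta x_i s_{i-1}^{\beta-1}$ when $\beta-1\le 1$, and more care is needed when $\beta-1>1$. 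The cleaner route, which I expect to be what the authors use, is to write the multinomial/binomial-type expansion directly: treat $\bigl(\sum_i C_iY_i\bigr)^\beta$ by first conditioning on the $\{C_i\}$, so that it suffices to bound $E\bigl[(\sum_i c_iY_i)^\beta - \sum_i (c_iY_i)^\beta\bigr]$ for fixed nonnegative constants $c_i$ in terms of $E[Y^{p-1}]$ and $(\sum_i c_i)^\beta$.

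For that conditional bound I would proceed as follows. Since $\beta \le p$ and $t\mapsto t^{\beta/p}$ is concave, and since $\bigl(\sum_i c_iY_i\bigr)^p$ expands by the multinomial theorem into $\sum_i (c_iY_i)^p$ plus a sum of "mixed'' monomials $\binom{p}{k_1,\dots}\prod c_i^{k_i}Y_i^{k_i}$ each of which has at least two distinct indices present with exponents $k_i \le p-1$, I would take expectations over the independent $Y_i$'s: each mixed monomial's expectation factors, and every factor $E[Y_i^{k_i}]=E[Y^{k_i}]$ with $1\le k_i\le p-1$ can be bounded by $\bigl(E[Y^{p-1}]\bigr)^{k_i/(p-1)}$ using Jensen/Lyapunov (moments are log-convex, so $E[Y^{k}]^{1/k}$ is nondecreasing, giving $E[Y^k]\le E[Y^{p-1}]^{k/(p-1)}$ for $k\le p-1$). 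Hence in each mixed monomial the product of the $Y$-expectations is at most $\bigl(E[Y^{p-1}]\bigr)^{(\sum_i k_i)/(p-1)} = \bigl(E[Y^{p-1}]\bigr)^{p/(p-1)}$. Re-assembling, $E\bigl[(\sum_i c_iY_i)^p\bigr] - \sum_i c_i^p E[Y^p] \le \bigl(E[Y^{p-1}]\bigr)^{p/(p-1)}\bigl((\sum_i c_i)^p - \sum_i c_i^p\bigr) \le \bigl(E[Y^{p-1}]\bigr)^{p/(p-1)}(\sum_i c_i)^p$, after also bounding $c_i^p E[Y^p]$ appropriately on the diagonal; one must be slightly careful that the diagonal term in the target is $\sum_i (c_iY_i)^\beta$, not $\sum_i(c_iY_i)^p$, which is where the exponent $\beta$ rather than $p$ finally enters.

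To pass from the $p$-th power back to the $\beta$-th power I would use concavity of $u\mapsto u^{\beta/p}$ together with the elementary inequality $a^{\beta/p}-b^{\beta/p}\le (a-b)^{\beta/p}$ for $a\ge b\ge 0$ (valid since $\beta/p\le 1$): applying it with $a = (\sum_i c_iY_i)^p$ and $b=\sum_i (c_iY_i)^p$, and noting $a^{\beta/p}=(\sum_i c_iY_i)^\beta$ while $b^{\beta/p}=\bigl(\sum_i(c_iY_i)^p\bigr)^{\beta/p}\ge\sum_i(c_iY_i)^\beta$ (again by concavity / subadditivity of $u\mapsto u^{\beta/p}$), one gets $(\sum_i c_iY_i)^\beta - \sum_i(c_iY_i)^\beta \le \bigl((\sum_i c_iY_i)^p - \sum_i(c_iY_i)^p\bigr)^{\beta/p}$. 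Taking expectations and using Jensen on the concave map $x\mapsto x^{\beta/p}$ converts the $p$-th-power estimate above into $\bigl(E[Y^{p-1}]\bigr)^{\beta/(p-1)} E\bigl[(\sum_i c_i)^\beta\bigr]$ after the $c_i$ are made random again, i.e.\ $\bigl(E[Y^{p-1}]\bigr)^{\beta/(p-1)} E\bigl[(\sum_i C_i)^\beta\bigr]$, which is exactly the claimed bound. The main obstacle I anticipate is organizing the combinatorics of the multinomial expansion cleanly enough to see that \emph{every} off-diagonal monomial contributes at most $\bigl(E[Y^{p-1}]\bigr)^{p/(p-1)}$ times its $c$-monomial, and simultaneously tracking the switch between exponent $p$ (needed for a clean multinomial expansion) and exponent $\beta$ (what the statement wants) via the concavity inequalities; getting the constant to come out as the stated $\bigl(E[Y^{p-1}]\bigr)^{\beta/(p-1)}$ rather than something weaker requires the Lyapunov bound to be applied at exactly the right exponent $p-1$.
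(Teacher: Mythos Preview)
Your approach is essentially the paper's: both expand $(\sum_i C_iY_i)^p$ multinomially, bound each cross-term's $Y$-expectation via Lyapunov by $\|Y\|_{p-1}^p$, and pass from exponent $p$ to $\beta$ using subadditivity of $u\mapsto u^{\beta/p}$ together with (conditional) Jensen. One slip to fix: subadditivity gives $\bigl(\sum_i(c_iY_i)^p\bigr)^{\beta/p}\le\sum_i(c_iY_i)^\beta$, not $\ge$ as you wrote---but it is precisely this $\le$ that you need, since it yields $(\sum_i c_iY_i)^\beta-\sum_i(c_iY_i)^\beta\le a^{\beta/p}-b^{\beta/p}\le(a-b)^{\beta/p}$, after which your argument goes through exactly as in the paper.
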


{\sc Remark:} Note that the preceding lemma does not exclude the case when \linebreak $E \left[ \left( \sum_{i=1}^k C_i Y_i \right)^\beta  \right] = \infty$ but $E\left[ \left( \sum_{i=1}^k C_i Y_i \right)^\beta - \sum_{i=1}^k (C_iY_i)^\beta \right] < \infty$. 

\bigskip

We now give estimates for the $\beta$-moments of $W_n$ for $\beta \in (0, 1]$ and $\beta > 1$ in Lemmas~\ref{L.MomentSmaller_1} and \ref{L.GeneralMoment}, respectively. Throughout the rest of the paper define \linebreak $\rho_\beta = E\left[ \sum_{i=1}^N C_i^\beta \right]$ for any $\beta > 0$, and $\rho \equiv \rho_1$.

\begin{lem} \label{L.MomentSmaller_1}
For $0 < \beta \leq 1$ and all $n \geq 0$,
$$E[ W_n^\beta ] \leq  E[Q^\beta] \rho_\beta^{n}.$$
\end{lem}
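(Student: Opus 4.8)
The plan is to run a one-step induction on $n$ using the distributional recursion \eqref{eq:WnRec} for $W_n$, namely $W_n \stackrel{\mathcal{D}}{=} \sum_{k=1}^N C_k W_{(n-1),k}$, where the $\{W_{(n-1),k}\}$ are i.i.d.\ copies of $W_{n-1}$ independent of $(N, C_1, C_2, \dots)$. First I would dispose of the trivial cases: if $E[Q^\beta] = \infty$ or $\rho_\beta = \infty$ there is nothing to prove, so assume both are finite and argue by induction on $n$, the base case $n=0$ being the identity $E[W_0^\beta] = E[Q^\beta] = E[Q^\beta]\rho_\beta^0$ since $W_0 = Q$.

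For the inductive step, the key elementary fact is that for $0 < \beta \leq 1$ the map $t \mapsto t^\beta$ is subadditive on $[0,\infty)$, so that $\left( \sum_{k} x_k \right)^\beta \leq \sum_k x_k^\beta$ for any collection of nonnegative reals (including countably many, by monotone convergence). Applying this to \eqref{eq:WnRec} gives
$$W_n^\beta \stackrel{\mathcal{D}}{=} \left( \sum_{k=1}^N C_k W_{(n-1),k} \right)^\beta \leq \sum_{k=1}^N C_k^\beta W_{(n-1),k}^\beta .$$
Taking expectations and conditioning on $(N, C_1, C_2, \dots)$, I would use the independence of the $\{W_{(n-1),k}\}$ from $(N, C_1, C_2, \dots)$ together with Tonelli's theorem (all summands are nonnegative, which also covers the case $N = \infty$) to pull out $E[W_{n-1}^\beta]$:
$$E[W_n^\beta] \leq E\left[ \sum_{k=1}^N C_k^\beta \, E[W_{n-1}^\beta] \right] = E\left[ \sum_{k=1}^N C_k^\beta \right] E[W_{n-1}^\beta] = \rho_\beta \, E[W_{n-1}^\beta].$$
Combining this with the induction hypothesis $E[W_{n-1}^\beta] \leq E[Q^\beta]\rho_\beta^{n-1}$ yields $E[W_n^\beta] \leq E[Q^\beta]\rho_\beta^n$, completing the induction.

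There is essentially no serious obstacle here; the only points requiring a modicum of care are the justification of the interchange of the (possibly infinite) sum over $k$ with the expectation — handled by Tonelli since every term is nonnegative — and ensuring the conditioning step is legitimate, for which the independence of $(N, C_1, C_2, \dots)$ from the i.i.d.\ family $\{W_{(n-1),k}\}$ in \eqref{eq:WnRec} is exactly what is needed. One should also note that the induction carries finiteness of $E[W_{n-1}^\beta]$ along automatically (it is bounded by $E[Q^\beta]\rho_\beta^{n-1} < \infty$), so no separate integrability hypothesis is required.
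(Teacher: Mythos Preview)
Your proof is correct and follows essentially the same route as the paper: apply the subadditivity inequality $\left(\sum_k x_k\right)^\beta \le \sum_k x_k^\beta$ for $0<\beta\le 1$ to the recursion \eqref{eq:WnRec}, condition on $(N,C_1,C_2,\dots)$ and use Fubini/Tonelli to extract $E[W_{n-1}^\beta]$, obtaining $E[W_n^\beta]\le \rho_\beta\,E[W_{n-1}^\beta]$, and then iterate. The paper phrases the last step as ``iterating $n$ times'' rather than as a formal induction, and omits your explicit treatment of the trivial $\rho_\beta=\infty$ or $E[Q^\beta]=\infty$ cases, but the argument is otherwise identical.
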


\begin{lem} \label{L.GeneralMoment}
For $\beta > 1$ suppose $E[Q^\beta]< \infty$, $E\left[ \left( \sum_{i=1}^N C_i \right)^\beta \right] < \infty$, and $\rho \vee \rho_\beta < 1$. Then, there exists a constant $K_\beta > 0$ such that for all $n \geq 0$, 
\begin{equation*} 
E[ W_n^\beta ] \leq K_\beta ( \rho \vee \rho_\beta  )^{n}.
\end{equation*}
\end{lem}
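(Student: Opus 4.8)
The plan is to prove Lemma~\ref{L.GeneralMoment} by induction on $n$, using the recursion \eqref{eq:WnRec}, namely $W_n \stackrel{\mathcal{D}}{=} \sum_{k=1}^N C_k W_{(n-1),k}$ with the $\{W_{(n-1),k}\}$ i.i.d.\ copies of $W_{n-1}$ independent of $(N,C_1,C_2,\dots)$. For the inductive step I would split the $\beta$-th moment of the sum into the ``diagonal'' part $\sum_k (C_k W_{(n-1),k})^\beta$ and the remainder, and apply Lemma~\ref{L.Alpha_Moments} with $Y_i = W_{(n-1),i}$ and $p=\lceil\beta\rceil$ to obtain
\begin{align*}
E[W_n^\beta] &\le E\left[ \sum_{k=1}^N C_k^\beta W_{(n-1),k}^\beta \right] + \left( E\left[ W_{n-1}^{p-1} \right] \right)^{\beta/(p-1)} E\left[ \left( \sum_{k=1}^N C_k \right)^\beta \right] \\
&= \rho_\beta \, E[W_{n-1}^\beta] + D \left( E\left[ W_{n-1}^{p-1} \right] \right)^{\beta/(p-1)},
\end{align*}
where $D \triangleq E[(\sum_i C_i)^\beta] < \infty$ by hypothesis and I used the independence of the $W$'s from the $(N,C)$'s to factor the first term. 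Here $p-1 \in \{1,2,\dots\}$ and $p-1 < \beta$, so $E[W_{n-1}^{p-1}]$ is an already-controlled lower-order moment.

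The key point is that the lower-order moments $E[W_{n-1}^{p-1}]$ must themselves be bounded by something that decays geometrically at rate $\rho\vee\rho_\beta$ raised to an appropriate power. I would set this up as a simultaneous induction over all integer moments $j \in \{1,2,\dots,p-1\}$ together with the target moment $\beta$: Lemma~\ref{L.MomentSmaller_1} (for $j$ where $\rho_j$ applies with $j\le 1$, i.e.\ $j=1$) and a version of the $\beta>1$ bound applied at integer exponents $j \le p-1$ give, by a downward induction on the moment order, $E[W_n^j] \le \tilde K_j (\rho\vee\rho_j)^n$ with $\rho_j \le \rho \vee \rho_\beta$ for $1 \le j \le \beta$ (this last inequality by log-convexity of $\beta \mapsto \rho_\beta$: since $\rho_1 = \rho < 1$ and $\rho_\beta < 1$, every $\rho_j$ with $j$ between $1$ and $\beta$ is at most $\rho \vee \rho_\beta$; the hypotheses $E[Q^\beta]<\infty$ and $E[(\sum_i C_i)^\beta]<\infty$ also imply the corresponding lower-order hypotheses by Jensen/H\"older). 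Writing $s = \rho \vee \rho_\beta < 1$, the recursion becomes $a_n \le \rho_\beta a_{n-1} + D(\tilde K_{p-1})^{\beta/(p-1)} s^{(p-1)n \cdot \beta/(p-1)} = \rho_\beta a_{n-1} + c\, s^{\beta n}$ for a constant $c$, where $a_n = E[W_n^\beta]$.

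Finally I would solve this linear recursion explicitly: $a_n \le \rho_\beta^n a_0 + c \sum_{m=1}^n \rho_\beta^{\,n-m} s^{\beta m}$ with $a_0 = E[Q^\beta] < \infty$. Since $\rho_\beta \le s < 1$ and $s^\beta \le s < 1$, both $\rho_\beta^n$ and the convolution sum are $O(n \, s^n)$ at worst, hence bounded by $K_\beta\, s^n = K_\beta (\rho\vee\rho_\beta)^n$ after absorbing the polynomial factor into a slightly larger geometric rate (or, more cleanly, bounding $\sum_{m=1}^n \rho_\beta^{n-m} s^{\beta m} \le \sum_{m=1}^n s^{n-m} s^{\beta m} = s^n \sum_{m=1}^n s^{(\beta-1)m} \le s^n/(1-s^{\beta-1})$ when $\beta>1$, which is exactly the claimed form). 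I expect the main obstacle to be organizing the nested induction cleanly: one needs the integer-moment bounds at exponents $1,\dots,p-1$ before bounding the $\beta$-moment, so the argument should really be phrased as ``prove the bound for all real $\beta>0$ simultaneously, by induction on $\lceil\beta\rceil$,'' with Lemma~\ref{L.MomentSmaller_1} as the base case $\lceil\beta\rceil=1$; verifying that the moment and weight hypotheses propagate down to the lower exponents, and that $\rho_j \le \rho\vee\rho_\beta$ throughout, are the points that need care rather than heavy computation.
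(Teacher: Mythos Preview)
Your approach matches the paper's: it first proves the integer-moment case by induction on $p$ (stated there as a separate Lemma~\ref{L.IntegerMoment}), then applies Lemma~\ref{L.Alpha_Moments} exactly as you do to obtain $E[W_n^\beta] \le \rho_\beta\, E[W_{n-1}^\beta] + D\bigl(E[W_{n-1}^{p-1}]\bigr)^{\beta/(p-1)}$ and iterates, using convexity of $\theta\mapsto\rho_\theta$ to replace $\rho_{p-1}$ by $\rho\vee\rho_\beta$. One arithmetic slip to fix: from $E[W_{n-1}^{p-1}] \le \tilde K_{p-1}\, s^{\,n-1}$ the forcing term is $c\,s^{(n-1)\beta/(p-1)}$, not $c\,s^{\beta n}$; since $\gamma \triangleq \beta/(p-1) > 1$ your clean summation bound still works verbatim with $\gamma-1$ in place of $\beta-1$, so the conclusion is unaffected.
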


Now we are ready to establish the finiteness of moments of the solution $R$ given by \eqref{eq:ExplicitConstr}  in Section \ref{S.LinearRec}. The proof of this lemma uses well known contraction arguments, but for completeness we provide the details below.

\begin{lem} \label{L.Moments_R}
Assume that $E[Q^\beta] < \infty$ for some $\beta > 0$. In addition, suppose that either (i) $\rho_\beta < 1$ if $0 < \beta < 1$, or (ii) $(\rho \vee \rho_\beta)  < 1$ and $E\left[\left( \sum_{i=1}^N C_i \right)^\beta\right] < \infty$ if $\beta \geq 1$. Then, $E[R^\gamma] < \infty$ for all $0  < \gamma \leq \beta$, and in particular, $R < \infty$ a.s. Moreover, if $\beta \geq 1$, $R^{(n)} \stackrel{L_\beta}{\to} R$, where $L_\beta$ stands for convergence in $(E|\cdot|^\beta)^{1/\beta}$ norm. 
\end{lem}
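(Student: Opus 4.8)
The plan is to combine the moment estimates for the partial sums $W_n$ from Lemmas~\ref{L.MomentSmaller_1} and \ref{L.GeneralMoment} with the representation $R = \sum_{k=0}^\infty W_k$ from \eqref{eq:ExplicitConstr}, treating the cases $0<\beta<1$ and $\beta\geq 1$ separately because of the availability (or not) of the triangle inequality for $(E|\cdot|^\beta)^{1/\beta}$.

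First I would handle the sub-additive range $0<\gamma\le\beta<1$ (case (i)). Here I use the elementary inequality $(\sum_k a_k)^\gamma \le \sum_k a_k^\gamma$ for nonnegative $a_k$, so that $E[R^\gamma] = E\big[(\sum_{k\ge0} W_k)^\gamma\big] \le \sum_{k\ge0} E[W_k^\gamma]$. By Lemma~\ref{L.MomentSmaller_1}, $E[W_k^\gamma]\le E[Q^\gamma]\rho_\gamma^k$, and since $\gamma\le\beta<1$ and $\rho_\beta<1$, Jensen/convexity of $\beta\mapsto\rho_\beta$ (or a direct monotonicity argument on $C_i^\gamma$ versus $C_i^\beta$ on the relevant ranges — more carefully, one notes $\rho_\gamma\le\rho_\beta\vee\rho$ is not automatic, so I would instead invoke that $E[Q^\gamma]<\infty$ follows from $E[Q^\beta]<\infty$ by Jensen and that $\rho_\gamma<1$ follows from log-convexity of $\beta\mapsto\rho_\beta$ together with $\rho_\beta<1$ and $\rho_0=E[N]$ — actually the cleanest route is: it suffices to prove $E[R^\beta]<\infty$, since then $E[R^\gamma]<\infty$ for $\gamma<\beta$ by Jensen). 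So I reduce to $\gamma=\beta$: $E[R^\beta]\le\sum_{k\ge0}E[W_k^\beta]\le E[Q^\beta]\sum_{k\ge0}\rho_\beta^k = E[Q^\beta]/(1-\rho_\beta)<\infty$. This gives $R<\infty$ a.s.\ as well.

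For case (ii), $\beta\ge1$, I would work in the $L_\beta$ norm. Write $\|X\|_\beta=(E|X|^\beta)^{1/\beta}$; since $\beta\ge1$ this is a genuine norm, so $\|R^{(n)}\|_\beta = \|\sum_{k=0}^n W_k\|_\beta \le \sum_{k=0}^n\|W_k\|_\beta$. By Lemma~\ref{L.GeneralMoment}, $\|W_k\|_\beta \le K_\beta^{1/\beta}(\rho\vee\rho_\beta)^{k/\beta}$, and since $\rho\vee\rho_\beta<1$ the geometric series $\sum_{k\ge0}(\rho\vee\rho_\beta)^{k/\beta}$ converges; hence $\sup_n\|R^{(n)}\|_\beta<\infty$ and moreover $\{R^{(n)}\}$ is Cauchy in $L_\beta$ because $\|R^{(m)}-R^{(n)}\|_\beta\le\sum_{k=n+1}^m\|W_k\|_\beta\to0$. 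Since $R^{(n)}\uparrow R$ a.s.\ (monotone, as each $W_k\ge0$), the $L_\beta$ limit must coincide with $R$ (extract an a.s.-convergent subsequence, or apply Fatou to get $\|R\|_\beta\le\liminf\|R^{(n)}\|_\beta<\infty$ and then dominated/monotone convergence), giving $R^{(n)}\stackrel{L_\beta}{\to}R$ and $E[R^\beta]<\infty$, hence $R<\infty$ a.s.\ and $E[R^\gamma]<\infty$ for $0<\gamma\le\beta$ by Jensen.

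The main obstacle is not any single estimate but the bookkeeping needed to pass from $E[R^\beta]<\infty$ (the only thing the geometric bounds directly give) down to $E[R^\gamma]<\infty$ for all smaller $\gamma$, and in case (ii) to rigorously identify the $L_\beta$-limit of $R^{(n)}$ with the a.s.-limit $R$ from \eqref{eq:ExplicitConstr}. Both are handled by standard arguments — Jensen's inequality for the former, and uniqueness of limits together with Fatou/monotone convergence for the latter — but they are the points where one must be careful rather than formulaic.
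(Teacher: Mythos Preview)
Your proposal is correct and follows essentially the same route as the paper: both arguments feed the geometric bounds $E[W_k^\beta]\le K\eta^k$ from Lemmas~\ref{L.MomentSmaller_1} and \ref{L.GeneralMoment} into the series $R=\sum_k W_k$, using subadditivity of $x\mapsto x^\beta$ when $\beta<1$ and Minkowski's inequality when $\beta\ge1$, then derive the lower moments via Jensen. The only cosmetic difference is that the paper passes to $E[R^\beta]$ directly by monotone convergence and then bounds $E[|R^{(n)}-R|^\beta]$ separately, whereas you go through a Cauchy-in-$L_\beta$ argument and identify the limit; these are equivalent.
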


{\sc Remark:} It is interesting to observe that for $\beta > 1$ the conditions $\rho_\beta < 1$ and $E\left[ \left( \sum_{i=1}^N C_i \right)^\beta \right] < \infty$ are consistent with Theorem 3.1 in \cite{Alsm_Kuhl_07}, Proposition 4 in \cite{Iksanov_04} and Theorem 2.1 in \cite{Liu_00}, 
which give the conditions for the finiteness of the $\beta$-moment of the solution to the related critical ($\rho_1 = 1$) homogeneous ($Q \equiv 0$) equation.

\begin{proof}
Let 
$$\eta = \begin{cases} \rho_\beta & \text{ if }\beta < 1 \\ \rho \vee \rho_\beta, & \text{ if } \beta \geq 1. \end{cases}$$ 
Then by Lemmas \ref{L.MomentSmaller_1} and \ref{L.GeneralMoment}, 
\begin{equation} \label{eq:EW_n}
E[W_n^\beta] \leq K \eta^n 
\end{equation}
for some $K > 0$. Suppose $\beta \geq 1$, then, by monotone convergence and Minkowski's inequality, 
\begin{align*}
E[R^\beta] &= E\left[ \lim_{n\to\infty} \left(\sum_{k=0}^n W_k \right)^\beta  \right] = \lim_{n\to \infty} E\left[ \left(\sum_{k=0}^n W_k\right)^\beta  \right] \\
&\leq \lim_{n\to\infty} \left( \sum_{k=0}^n \left( E[W_k^\beta] \right)^{1/\beta} \right)^\beta \leq K \left( \sum_{k=0}^\infty \eta^{k/\beta}   \right)^\beta < \infty.
\end{align*}
This implies that $R < \infty$ a.s. When $0 < \beta \le 1$ use the inequality $\left( \sum_{k=0}^n y_k \right)^\beta \leq \sum_{k=0}^n y_k^\beta$ for any $y_i \geq 0$ instead of Minkowski's inequality. Furthermore, for any \linebreak $0 < \gamma \leq \beta$, 
$$E[R^\gamma] = E\left[ (R^\beta)^{\gamma/\beta}\right] \leq \left(E[R^\beta] \right)^{\gamma/\beta} < \infty.$$

That $R^{(n)} \stackrel{L_\beta}{\to} R$ whenever $\beta \geq 1$ follows from noting that $E[|R^{(n)} - R|^\beta]$ \linebreak $= E\left[ \left( \sum_{k = n+1}^\infty W_k \right)^\beta \right]$ and applying the same arguments used above to obtain the bound $E[|R^{(n)} - R|^\beta] \leq K \eta^{n+1}/(1 - \eta^{1/\beta})^\beta$. 
\end{proof}

Next, we show that under some technical conditions, the iteration of recursion \eqref{eq:Linear} results in a process that converges in distribution to $R$ for any initial condition $R_0^*$. To this end, consider a weighted branching tree $\mathcal{T}_{Q,C}$, as defined in Section \ref{S.ModelDescription}. Now, define 
$$R_n^*  \triangleq R^{(n-1)} + W_n(R_0^*), \qquad n \geq 1,$$
where $R^{(n-1)}$ is given by \eqref{eq:R_nDef},
\begin{equation} \label{eq:LastWeights}
W_n(R_0^*) = \sum_{{\bf i} \in A_n} R_{0,{\bf i}}^* \boldC_{\bf i},
\end{equation}
and $\{ R_{0,{\bf i}}^*\}_{{\bf i} \in U}$ are iid copies of an initial value $R_0^*$, independent of the entire weighted tree $\mathcal{T}_{Q,C}$. It follows from \eqref{eq:LinearRecSamplePath} and \eqref{eq:LastWeights} that, for $n \geq 0$, 
\begin{equation} \label{eq:StarRecursion}
R_{n+1}^* = \sum_{j=1}^{N} C_j R_{j}^{(n-1)}  + Q + W_{n+1}(R_0^*) = \sum_{j=1}^{N} C_j \left( R_{j}^{(n-1)}  + \sum_{{\bf i} \in A_{n,j}} R_{0,{\bf i}}^* \prod_{k=2}^n C_{(j,\dots,i_k)} \right) + Q,
\end{equation}
where $\{ R_{j}^{(n-1)} \}$ are independent copies of $R^{(n-1)}$ corresponding to the tree starting with individual $j$ in the first generation and ending on the $n$th generation, and $A_{n,j}$ is the set of all nodes in the $(n+1)$th generation that are descendants of individual $j$ in the first generation.  It follows that
$$R_{n+1}^*= \sum_{j=1}^N C_j R_{n,j}^* + Q,$$
where $\{R_{n,j}^*\}$ are the expressions inside the parenthesis in \eqref{eq:StarRecursion}. Clearly, $\{R_{n,j}^*\}$ are iid copies of $R_n^*$, thus we show that $R_n^*$ is equal in distribution to the process derived by iterating \eqref{eq:Linear} with an initial condition $R_0^*$. The following lemma shows that $R_n^* \Rightarrow R$ for any initial condition $R_0^*$ satisfying a moment assumption, where $\Rightarrow$ denotes convergence in distribution.

\begin{lem} \label{L.Convergence}
For any initial condition $R_0^* \geq 0$, if $E[Q^\beta], E[(R_0^*)^\beta] < \infty$ and $\rho_\beta = E\left[ \sum_{i=1}^N C_i^\beta \right] < 1$ for some $0 < \beta \leq 1$, then
$$R_n^* \Rightarrow R,$$
with $E[R^\beta] < \infty$. Furthermore, under these assumptions, the distribution of $R$ is the unique solution with finite $\beta$-moment to recursion \eqref{eq:Linear}. 
\end{lem}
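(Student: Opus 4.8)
The plan is to establish the three assertions of the lemma in order: finiteness of $E[R^\beta]$, the distributional convergence $R_n^*\Rightarrow R$, and uniqueness. For the \textbf{finiteness} part, $E[R^\beta]<\infty$ follows directly from Lemma~\ref{L.Moments_R}: case~(i) applies when $0<\beta<1$, and case~(ii) applies when $\beta=1$, since then $\rho\vee\rho_\beta=\rho_1=\rho_\beta<1$ and $E\big[\big(\sum_{i=1}^N C_i\big)^1\big]=\rho_1<\infty$. In particular $R<\infty$ a.s., and by the construction \eqref{eq:ExplicitConstr} together with the nonnegativity of the $W_k$, the partial sums satisfy $R^{(n)}\uparrow R$ a.s.\ as $n\to\infty$.

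For the \textbf{convergence} part, I work on the common probability space carrying the weighted tree $\mathcal{T}_{Q,C}$ and the iid copies $\{R_{0,\mathbf{i}}^*\}$, on which $R_n^*=R^{(n-1)}+W_n(R_0^*)$ holds by definition. Since $R^{(n-1)}\to R$ a.s., it suffices to prove that $W_n(R_0^*)\to 0$ in probability. Combining the elementary inequality $\big(\sum_k y_k\big)^\beta\le\sum_k y_k^\beta$ valid for $0<\beta\le1$ and $y_k\ge0$, the independence of $\{R_{0,\mathbf{i}}^*\}$ from $\mathcal{T}_C$, and the conditioning/iteration computation that yields \eqref{eq:PiMoments}, one obtains
$$E\big[W_n(R_0^*)^\beta\big]\;\le\; E\Big[\,\sum_{\mathbf{i}\in A_n}(R_{0,\mathbf{i}}^*)^\beta\,\boldC_{\mathbf{i}}^\beta\,\Big]\;=\;E[(R_0^*)^\beta]\,\rho_\beta^{\,n},$$
which tends to $0$ as $n\to\infty$ because $\rho_\beta<1$. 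Hence $W_n(R_0^*)\to0$ in $L_\beta$, so $R_n^*\to R$ in probability and a fortiori $R_n^*\Rightarrow R$.

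For \textbf{uniqueness}, let $\widetilde R\ge0$ be any solution of \eqref{eq:Linear} with $E[\widetilde R^\beta]<\infty$. Running the iteration from the initial value $R_0^*=\widetilde R$: as explained in the discussion preceding the lemma, $R_n^*$ is then distributed as the $n$-fold iterate of \eqref{eq:Linear} started from a copy of $\widetilde R$, and since $\widetilde R$ is itself a fixed point of \eqref{eq:Linear}, $R_n^*\stackrel{\mathcal{D}}{=}\widetilde R$ for every $n$. On the other hand, the convergence part applies here (its only hypothesis on the initial condition is $E[\widetilde R^\beta]<\infty$) and gives $R_n^*\Rightarrow R$. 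Therefore $\widetilde R\stackrel{\mathcal{D}}{=}R$, which is the claimed uniqueness.

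The only step requiring genuine care is the moment bound in the convergence part when $N=\infty$: one must justify interchanging the expectation with the (possibly infinite) sum over $A_n$ and with the conditioning on the $\sigma$-algebras generated by earlier generations. This is legitimate by Tonelli's theorem since all the summands are nonnegative, exactly as in the derivation of \eqref{eq:PiMoments}; everything else is routine bookkeeping.
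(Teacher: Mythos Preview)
Your proof is correct and follows essentially the same route as the paper's: invoke Lemma~\ref{L.Moments_R} for $E[R^\beta]<\infty$, bound $E[W_n(R_0^*)^\beta]\le E[(R_0^*)^\beta]\rho_\beta^n$ (the paper cites Lemma~\ref{L.MomentSmaller_1} for this, while you reproduce the argument via \eqref{eq:PiMoments}), deduce $W_n(R_0^*)\to0$ in probability, and combine with $R^{(n-1)}\to R$ a.s.\ to get $R_n^*\Rightarrow R$ (the paper phrases this as Slutsky's theorem). Your uniqueness argument spells out in full what the paper's proof states in one sentence, but the content is the same.
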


\begin{proof}
Since $R^{(n )}\to R$ a.s., the result will follow from Slutsky's Theorem (see Theorem 25.4, p. 332 in \cite{Billingsley_1995}) once we show that $W_n(R_0^*) \Rightarrow 0$. To this end, note that $W_n(R_0^*)$, as defined by \eqref{eq:LastWeights}, is the same as $W_n$ if we substitute the $Q_{{\bf i}}$ by the $R_{0,{\bf i}}^*$. Then, for every $\epsilon > 0$ we have that
\begin{align*}
P( W_n(R_0^*) > \epsilon) &\leq \epsilon^{-\beta} E[ W_n(R_0^*)^\beta] \\
&\leq \epsilon^{-\beta} \rho_\beta^n E[(R_0^*)^\beta] \qquad \text{(by Lemma \ref{L.MomentSmaller_1})} .
\end{align*}
Since by assumption the right-hand side converges to zero as $n \to \infty$, then $R_n^* \Rightarrow R$. Furthermore, $E[R^\beta] < \infty$ by Lemma \ref{L.Moments_R}. Clearly, under the assumptions, the distribution of $R$ represents the unique solution to \eqref{eq:Linear}, since any other possible solution with finite $\beta$-moment would have to converge to the same limit.  
\end{proof}

{\sc Remarks:} (i) Note that when $E[N] < 1$ the branching tree is a.s. finite and no conditions on the $\{C_i\}$ are necessary for $R < \infty$ a.s. This corresponds to the second condition in Theorem 1 of \cite{Brandt_86}. (ii) In view of the same theorem from \cite{Brandt_86}, one could possibly establish the convergence of $R_n^* \Rightarrow R < \infty$ under milder conditions. However, since in this paper we only study the power tails of $R$, the assumptions of Lemma \ref{L.Convergence} are not restrictive.  (iii) Note that if $E\left[ \sum_{i=1}^N C_i^\alpha \right] = 1$ with $\alpha \in (0, 1]$, then there might not be a $0 < \beta < \alpha$ for which $E\left[ \sum_{i=1}^N C_i^\beta \right] < 1$, e.g., the case of deterministic $C_i$'s that was studied in \cite{Rosler_93}.

\subsection{Main result} \label{SS.MainLinear}

We now characterize the tail behavior of the distribution of the solution $R$ to the nonhomogeneous equation \eqref{eq:Linear}, 
as defined by \eqref{eq:ExplicitConstr}.

\begin{thm} \label{T.LinearRecursion}
Let $(Q, N, C_1, C_2, \dots)$ be a nonnegative random vector, with $N \in \mathbb{N} \cup \{\infty\}$, $P(Q > 0) > 0$
and $R$ be the solution to \eqref{eq:Linear} given by  \eqref{eq:ExplicitConstr}. 
Suppose that there exists $j \geq 1$ with $P(N\ge j,C_j>0)>0$ such that the measure $P\left(\log C_j\in du, C_j > 0, N\ge j\right)$ is nonarithmetic, and 
that for some $\alpha > 0$,  $E[Q^\alpha] < \infty$,  $0 < E \left[ \sum_{i=1}^N C_i^\alpha \log C_i \right] < \infty$ and  $ E \left[ \sum_{i=1}^N C_i^\alpha \right] = 1$. In addition, assume
\begin{enumerate}
\item $E\left[ \sum_{i=1}^N C_i \right] < 1$ and $E\left[ \left( \sum_{i=1}^N C_i \right)^\alpha \right] < \infty$, if $\alpha > 1$; or,
\item $E\left[ \left( \sum_{i=1}^N C_i^{\alpha/(1+\epsilon)}\right)^{1+\epsilon} \right] < \infty$ for some $0 < \epsilon< 1$, if $0 < \alpha \leq 1$. 
\end{enumerate}
Then,
$$P(R > t) \sim H t^{-\alpha}, \qquad t \to \infty,$$
where $0 \leq H < \infty$ is given by
\begin{align*}
H &= \frac{1}{E\left[ \sum_{i=1}^N C_i^\alpha \log C_i  \right] } \int_{0}^\infty v^{\alpha-1} \left( P(R > v) - E\left[ \sum_{i=1}^{N} \Indicator(C_{i} R > v ) \right]    \right) dv \\
&= \frac{E\left[ \left( \sum_{i=1}^N C_i R_i +Q \right)^\alpha - \sum_{i=1}^N (C_i R_i )^\alpha \right]}{\alpha E\left[ \sum_{i=1}^N C_i^\alpha \log C_i  \right] }.
\end{align*}
\end{thm}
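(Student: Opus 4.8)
The plan is to verify the hypotheses of Theorem~\ref{T.NewGoldie} with $R$ the explicitly constructed solution of \eqref{eq:ExplicitConstr}, and then to evaluate the constant $H$ in closed form. First I would check the structural hypotheses: the nonarithmeticity condition on $P(\log C_j \in du, C_j>0, N\ge j)$, the moment condition $0 < E[\sum_i C_i^\alpha \log C_i] < \infty$, and the criticality $E[\sum_i C_i^\alpha]=1$ are all assumed outright. The condition $E[\sum_i C_i^\gamma]<\infty$ for some $0\le\gamma<\alpha$ needed by Theorem~\ref{T.NewGoldie} follows from the assumed hypotheses: in case (1) with $\alpha>1$, from $E[(\sum_i C_i)^\alpha]<\infty$ (hence $\rho=\rho_1<\infty$, and one takes $\gamma=1$); in case (2) with $\alpha\le 1$, from $E[(\sum_i C_i^{\alpha/(1+\epsilon)})^{1+\epsilon}]<\infty$, which bounds $\rho_{\alpha/(1+\epsilon)}$ with $\gamma=\alpha/(1+\epsilon)<\alpha$. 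Next I would check $E[R^\beta]<\infty$ for all $0<\beta<\alpha$ by invoking Lemma~\ref{L.Moments_R}: for $\beta<1$ one needs $\rho_\beta<1$, which holds by convexity of $\beta\mapsto\rho_\beta$ near any $\beta<\alpha$ together with $\rho_\alpha=1$ and finiteness of some $\rho_\gamma$; for $1\le\beta<\alpha$ (only relevant when $\alpha>1$) one additionally needs $\rho<1$ (assumed in (1)) and $E[(\sum_i C_i)^\beta]<\infty$, which follows from $E[(\sum_i C_i)^\alpha]<\infty$ and Jensen/Lyapunov. Also $E[Q^\alpha]<\infty$ is assumed, so $E[Q^\beta]<\infty$ for $\beta\le\alpha$.

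The substantive step is verifying the integrability condition \eqref{eq:Goldie_condition}, i.e.
$$\int_0^\infty \Bigl| P(R>t) - E\Bigl[\sum_{i=1}^N \Indicator(C_i R > t)\Bigr] \Bigr| t^{\alpha-1}\,dt < \infty.$$
Here I would use that $R$ solves $R \stackrel{\mathcal D}{=} \sum_{i=1}^N C_i R_i + Q$ with the $\{R_i\}$ iid copies independent of $(Q,N,C_1,C_2,\dots)$. The idea is the standard one: $P(R>t)$ and $E[\sum_i \Indicator(C_i R_i > t)]$ differ by a quantity controlled by the event that at least two of the summands $C_i R_i$ (or a summand and $Q$) are simultaneously large, plus the contribution of $Q$. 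Concretely, writing $S = \sum_i C_i R_i + Q$, one has the elementary bound
$$\Bigl| P(S>t) - \sum_i P(C_i R_i > t, \text{conditioned}) \Bigr| \le P(Q + \textstyle\sum_{i\ne \text{argmax}} C_i R_i \text{ pushes a non-large term over}) + \cdots,$$
which after multiplying by $t^{\alpha-1}$ and integrating reduces to showing finiteness of expressions like $E[(\sum_i C_i R_i + Q)^\alpha - \sum_i (C_i R_i)^\alpha]$. For $\alpha>1$ this is exactly the setting of Lemma~\ref{L.Alpha_Moments} (with $p=\lceil\alpha\rceil$, $Y_i=R_i$, plus the $Q$ term handled separately using $E[Q^\alpha]<\infty$ and $\rho<1$), giving the bound $(E[R^{p-1}])^{\alpha/(p-1)} E[(\sum_i C_i)^\alpha] < \infty$ since $p-1<\alpha$. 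For $0<\alpha\le 1$ one uses the subadditivity inequality $(\sum a_i)^\alpha \le \sum a_i^\alpha$ together with a more delicate splitting to extract the cross terms, and here condition (2), $E[(\sum_i C_i^{\alpha/(1+\epsilon)})^{1+\epsilon}]<\infty$, is what makes the relevant moment finite. This is the step I expect to be the main obstacle — getting the integrability with the right moment conditions for both the $\alpha>1$ and $\alpha\le 1$ regimes, and correctly handling the $Q$ contribution.

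Finally, with \eqref{eq:Goldie_condition} verified, Theorem~\ref{T.NewGoldie} gives $P(R>t)\sim H t^{-\alpha}$ with
$$H = \frac{1}{E[\sum_i C_i^\alpha \log C_i]} \int_0^\infty v^{\alpha-1}\Bigl(P(R>v) - E\Bigl[\sum_i \Indicator(C_i R > v)\Bigr]\Bigr)\,dv,$$
which is the first displayed formula. For the second formula I would evaluate the integral using Fubini: $\int_0^\infty v^{\alpha-1} P(R>v)\,dv = \frac1\alpha E[R^\alpha]$ and $\int_0^\infty v^{\alpha-1} E[\sum_i \Indicator(C_i R>v)]\,dv = \frac1\alpha E[\sum_i (C_i R)^\alpha] = \frac1\alpha E[R^\alpha]\rho_\alpha = \frac1\alpha E[R^\alpha]$ — but both are infinite when $H>0$, so this formal cancellation is not rigorous. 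Instead I would use the fixed-point equation: substitute $P(R>v) = P(\sum_i C_i R_i + Q > v)$ and recognize the numerator as the limit, via monotone/dominated convergence applied to the truncated integrals $\int_0^T$, of $\frac1\alpha E[(\sum_i C_i R_i + Q)^\alpha - \sum_i (C_i R_i)^\alpha]$, the difference of two quantities whose individual divergences cancel exactly. The integrability established in the previous step is precisely what justifies this identification, yielding
$$H = \frac{E\bigl[(\sum_i C_i R_i + Q)^\alpha - \sum_i (C_i R_i)^\alpha\bigr]}{\alpha\, E[\sum_i C_i^\alpha \log C_i]},$$
and by Lemma~\ref{L.Alpha_Moments} (and $E[Q^\alpha]<\infty$) this numerator is finite, so $0\le H<\infty$ as claimed.
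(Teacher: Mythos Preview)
Your overall strategy---reducing to Theorem~\ref{T.NewGoldie} after verifying $E[R^\beta]<\infty$ via Lemma~\ref{L.Moments_R} and then checking \eqref{eq:Goldie_condition}---matches the paper's, and your preliminary moment checks are correct.

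The gap is in how you verify \eqref{eq:Goldie_condition}. You assert that the absolute integrability ``reduces to showing finiteness of expressions like $E\bigl[(\sum_i C_iR_i+Q)^\alpha-\sum_i(C_iR_i)^\alpha\bigr]$.'' But that expectation equals $\alpha\int_0^\infty t^{\alpha-1}\bigl(P(R>t)-E\bigl[\sum_i \Indicator(C_iR_i>t)\bigr]\bigr)\,dt$, the integral \emph{without} the absolute value; since the integrand changes sign (it is negative whenever two or more of the $C_iR_i$ exceed $t$), finiteness of this signed integral does not by itself give \eqref{eq:Goldie_condition}. The paper's device is to insert $M\triangleq\max_{1\le i\le N} C_iR_i$ as an intermediate quantity: both $P(R>t)-P(M>t)$ and $E\bigl[\sum_i \Indicator(C_iR_i>t)\bigr]-P(M>t)$ are nonnegative, so the absolute value is dominated by their sum. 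The second piece is handled by Lemma~\ref{L.Max_Approx}---this is precisely where condition~(2) for $0<\alpha\le1$ enters, and its proof (via an exponential bound on the conditional distribution of $M$) is not trivial. The first piece integrates to $\frac{1}{\alpha}E\bigl[(R^*)^\alpha-M^\alpha\bigr]$ with $R^*=\sum_i C_iR_i+Q$; for $\alpha\le1$ subadditivity plus Lemma~\ref{L.Max_Approx} again suffice, while for $\alpha>1$ one further splits through $\sum_i(C_iR_i)^\alpha$ and invokes Lemma~\ref{L.ExtraQ} for the $Q$ contribution. With these pointwise-nonnegative bounds in place, the same decomposition also justifies Fubini directly in deriving the second expression for $H$, so no truncation argument is needed.
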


{\sc Remarks:} (i) The nonhomogeneous equation has been previously studied for the special case when $Q$ and the $\{C_i\}$ are deterministic constants. In particular, Theorem 5 of \cite{Rosler_93} analyzes the solutions to \eqref{eq:Linear} when $Q$ and the $\{C_i\}$ are nonnegative deterministic constants, 
which, when $\sum_{i=1}^N C_i^\alpha =1$, $\alpha>0$, implies that $C_i \leq 1$ for all $i$ and $\sum_{i} C_i^\alpha \log C_i \leq 0$, falling outside of the scope of this paper. The solutions to \eqref{eq:Linear} for the case when $Q$ and the $C_i$'s are real valued deterministic constants were analyzed in \cite{Alsm_Rosl_05}. For the very recent work (published on arXiv after the first draft of this paper) that characterizes all the solutions to \eqref{eq:Linear} for $Q$ and $\{C_i\}$ random see \cite{Alsm_Mein_10}. (ii) When $\alpha > 1$, the condition $E\left[ \left( \sum_{i=1}^N C_i \right)^\alpha \right] < \infty$ is needed to ensure that the tail of $R$ is not dominated by $N$. In particular, if the $\{C_i\}$ are iid and independent of $N$, the condition reduces to $E[N^\alpha] < \infty$ since $E[C^\alpha] < \infty$ is implied by the other conditions; see Theorems 4.2 and 5.4 in \cite{Jel_Olv_09}. Furthermore, when $0 < \alpha \leq 1$ the condition $E\left[ \left( \sum_{i=1}^N C_i \right)^\alpha \right] < \infty$ is redundant since $E\left[ \left( \sum_{i=1}^N C_i \right)^\alpha \right] \leq E\left[  \sum_{i=1}^N C_i^\alpha \right] = 1$, and the additional condition $E\left[ \left( \sum_{i=1}^N C_i^{\alpha/(1+\epsilon)} \right)^{1+\epsilon} \right] < \infty$ is needed. When the $\{C_i \}$ are iid and independent of $N$, the latter condition reduces to $E[N^{1+\epsilon}] < \infty$ (given the other assumptions), which is consistent with Theorem 4.2 in \cite{Jel_Olv_09}. 
(iii) Note that the second expression for $H$ is more suitable for actually computing it, especially in the case of $\alpha$ being an integer, as will be stated in the forthcoming Corollary~\ref{C.explicit}.  When $\alpha>1$ is not an integer, we can derive an explicit upper bound on $H$ by using Lemma~\ref{L.Max_Approx}. 
Regarding the lower bound, the elementary inequality 
$\left( \sum_{i=1}^k x_i \right)^\alpha \ge \sum_{i=1}^k x_i^\alpha$ for $\alpha\ge1$ and $x_i \geq 0$, yields
$$
H\ge \frac{E\left[ Q^\alpha  \right]}{\alpha E\left[ \sum_{i=1}^N C_i^\alpha \log C_i  \right] }>0.
$$
Similarly, for $0<\alpha<1$, using the corresponding inequality 
$\left( \sum_{i=1}^k x_i \right)^\alpha \le \sum_{i=1}^k x_i^\alpha$ for $0<\alpha\le1$, $x_i \geq 0$, we obtain
$H\le {E\left[ Q^\alpha  \right]}/{\left(\alpha E\left[ \sum_{i=1}^N C_i^\alpha \log C_i  \right] \right)}.$
(iv) Let us also observe that the solution $R$, given by  \eqref{eq:ExplicitConstr}, to equation \eqref{eq:Linear} 
may be a constant (non power law) $R=r>0$ when 
$P(r=Q+r \sum_{i=1}^N C_i)=1$. However, similarly as in remark (i), such a solution is excluded from the theorem since 
$P(r=Q+r\sum_{i=1}^N C_i)=1$ implies $E[\sum_i C_i^\alpha \log C_i]\le 0, \alpha>0$.

Before proceeding with the proof of Theorem \ref{T.LinearRecursion}, we need the following two technical results; their proofs are given in Section \ref{SS.LinearProofs}. Lemma \ref{L.Max_Approx} below will also be used in subsequent sections for other recursions. With some abuse of notation, we will use throughout the paper $\max_{1 \leq i \leq N} x_i$ to denote $\sup_{1 \leq i < N+1} x_i$ in case $N = \infty$.

\begin{lem} \label{L.Max_Approx}
Suppose $(N, C_1, C_2, \dots)$ is a nonnegative random vector, with $N \in \mathbb{N} \cup \{\infty\}$ and let $\{R_i\}_{i \in \mathbb{N}}$ be a sequence of iid nonnegative random variables independent of $(N, C_1, C_2, \dots)$ having the same distribution as $R$. For $\alpha >0$, suppose that $\sum_{i=1}^N (C_i R_i)^\alpha < \infty$ a.s. and $E[R^\beta]< \infty$ for any $0 < \beta < \alpha$.  Furthermore, assume that  $E\left[ \left( \sum_{i=1}^N C_i^{\alpha/(1+\epsilon)}\right)^{1+\epsilon} \right] < \infty$ for some $0 < \epsilon< 1$. Then, 
\begin{align*}
0 &\leq \int_{0}^\infty \left( E\left[ \sum_{i=1}^N \Indicator(C_i R_i > t ) \right] - P\left( \max_{1\leq i \leq N} C_i R_i > t \right) \right)  t^{\alpha -1} \, dt \\
&= \frac{1}{\alpha} E \left[  \sum_{i=1}^N  \left(C_i R_i \right)^\alpha - \left( \max_{1\leq i \leq N} C_i R_i \right)^\alpha   \right]  < \infty.
\end{align*}
\end{lem}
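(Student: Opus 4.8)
The identity to be established relates an integral of a difference of tail functions to an expectation of a difference of $\alpha$-th powers, so the natural first move is the standard layer-cake (Fubini) computation: for any nonnegative random variable $X$ one has $\int_0^\infty P(X > t)\, t^{\alpha-1}\, dt = \alpha^{-1} E[X^\alpha]$. Applying this formally with $X$ replaced, respectively, by each $C_i R_i$ (after summing over $i$ and interchanging sum and integral) and by $\max_{1\le i\le N} C_i R_i$ would immediately give
$$\int_0^\infty E\Bigl[\sum_{i=1}^N \Indicator(C_i R_i > t)\Bigr] t^{\alpha-1}\, dt = \frac{1}{\alpha} E\Bigl[\sum_{i=1}^N (C_i R_i)^\alpha\Bigr], \qquad \int_0^\infty P\Bigl(\max_{i} C_i R_i > t\Bigr) t^{\alpha-1}\, dt = \frac{1}{\alpha} E\Bigl[\bigl(\max_i C_i R_i\bigr)^\alpha\Bigr],$$
and subtracting yields the claimed formula. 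The inequality $0 \le \cdots$ on the left is then transparent, since $\sum_i \Indicator(C_i R_i > t) \ge \Indicator(\max_i C_i R_i > t)$ pointwise (if some term exceeds $t$, the maximum does too), and the finiteness on the right is exactly the content we must justify rigorously.

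The issue — and I expect this to be the main obstacle — is that \emph{both} individual expectations $E[\sum_i (C_i R_i)^\alpha]$ and $E[(\max_i C_i R_i)^\alpha]$ may be infinite (indeed when $\alpha \le 1$ and $N$ has a heavy enough law they can be), so one cannot split the difference and must keep things together throughout. The plan is therefore to work directly with the \emph{difference} $D(t) := E[\sum_{i=1}^N \Indicator(C_i R_i > t)] - P(\max_{1\le i\le N} C_i R_i > t)$, which is nonnegative, and to bound $\int_0^\infty D(t)\, t^{\alpha-1}\, dt$ by a finite quantity. Pointwise, $D(t) = E\bigl[\sum_{i=1}^N \Indicator(C_i R_i > t) - \Indicator(\max_j C_j R_j > t)\bigr]$; on the event that the maximum exceeds $t$, the summed indicator counts the number of indices $i$ with $C_i R_i > t$, so the integrand inside is $\bigl(\#\{i : C_i R_i > t\} - 1\bigr)\Indicator(\max_j C_j R_j > t)$, which is bounded above by $\#\{i : C_i R_i > t\}\,\Indicator(\exists\, j \ne i \text{ (or same, with a second index)} : C_j R_j > t)$ — i.e. it is controlled by pairs of indices both of whose products exceed $t$. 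A clean way to make this precise: for $t>0$,
$$\sum_{i=1}^N \Indicator(C_i R_i > t) - \Indicator\Bigl(\max_j C_j R_j > t\Bigr) \;\le\; \sum_{i=1}^N \Indicator(C_i R_i > t)\,\Indicator\Bigl(\sum_{j \ne i} C_j R_j > t \Bigr) \;\wedge\; \text{(a bound using }\textstyle\sum_j C_j R_j\text{)},$$
but more usefully, multiplying by $t^{\alpha-1}$ and integrating, one can dominate by quantities of the form $E[(C_i R_i)^{\alpha/(1+\epsilon)}(\sum_{j\ne i} C_j R_j)^{\alpha\epsilon/(1+\epsilon)}]$ summed over $i$, which is where the hypothesis $E[(\sum_i C_i^{\alpha/(1+\epsilon)})^{1+\epsilon}] < \infty$ enters through Hölder's inequality.

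Concretely, the key steps, in order, are: (1) reduce to showing $\int_0^\infty D(t) t^{\alpha-1}\, dt < \infty$, since nonnegativity and the exact formula follow from layer-cake once finiteness is known (by truncation $N \wedge m$ and monotone convergence, the formula extends from the finite-$N$ case where all terms are legitimately finite under $E[R^\beta]<\infty$, $\beta<\alpha$, and the integrability hypotheses); (2) write $D(t)$ as the expectation of $\sum_{i}\Indicator(C_i R_i > t) - \Indicator(\max_j C_j R_j>t)$ and bound this pointwise by $\sum_{i=1}^N \Indicator(C_iR_i>t,\ \sum_{j\ne i}C_jR_j > 0)$ refined to capture that a \emph{second} index contributes — the cleanest device is to note it is at most $t^{-\alpha\epsilon/(1+\epsilon)}\sum_{i=1}^N (C_iR_i)^{? }\cdots$; rather than this, integrate first: $\int_0^\infty\bigl(\sum_i\Indicator(C_iR_i>t) - \Indicator(\max C_jR_j>t)\bigr)t^{\alpha-1}dt = \alpha^{-1}\bigl(\sum_i(C_iR_i)^\alpha - (\max_j C_jR_j)^\alpha\bigr)$ \emph{as a pointwise identity in $\omega$} (valid because on the set where all $C_iR_i=0$ both sides vanish, and otherwise it is the elementary one-variable identity applied to the step functions), and then (3) bound $\sum_{i=1}^N (C_iR_i)^\alpha - (\max_j C_jR_j)^\alpha$ from above, almost surely, using the elementary inequality $\sum_i a_i^\alpha - (\max_i a_i)^\alpha \le \bigl(\sum_i a_i^{\alpha/(1+\epsilon)}\bigr)^{1+\epsilon} \cdot (\text{something finite})$ — in fact one shows $\sum_i a_i^\alpha - (\max_i a_i)^\alpha \le C_{\alpha,\epsilon}\,\bigl(\max_i a_i\bigr)^{\alpha\epsilon/(1+\epsilon)}\bigl(\sum_i a_i^{\alpha/(1+\epsilon)}\bigr)$ or similar, then apply Hölder with conjugate exponents $1/\epsilon'$ to separate the $R_i$'s (which have all moments below $\alpha$) from the $C_i$'s (controlled by hypothesis), and take expectations. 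Taking the expectation of the pointwise identity in step (2) and invoking Fubini (justified by the nonnegativity of the integrand $D(t)t^{\alpha-1}$) then simultaneously delivers both the stated equality and, via step (3), its finiteness; I would handle the interchange of $E$ and the $t$-integral by Tonelli since everything in sight is nonnegative, which sidesteps any need to know the individual pieces are finite.
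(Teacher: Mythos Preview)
Your plan for the nonnegativity and the identity is fine and matches the paper: the integrand $\sum_i \Indicator(C_iR_i>t)-\Indicator(\max_i C_iR_i>t)$ is nonnegative, and integrating $t^{\alpha-1}$ against it \emph{pathwise} (Tonelli) gives $\alpha^{-1}\bigl(\sum_i(C_iR_i)^\alpha-(\max_i C_iR_i)^\alpha\bigr)$, after which another Tonelli gives the stated expectation. No issue there.

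The gap is in your finiteness argument. Your pointwise inequality
\[
\sum_i a_i^\alpha-(\max_i a_i)^\alpha \le (\max_i a_i)^{\alpha\epsilon/(1+\epsilon)}\sum_i a_i^{\alpha/(1+\epsilon)}
\]
is correct, but the next step---``apply H\"older to separate the $R_i$'s from the $C_i$'s''---does not go through under the stated hypotheses. With $\beta=\alpha/(1+\epsilon)$ you are left with $E\bigl[(\max_i C_iR_i)^{\beta\epsilon}\sum_i(C_iR_i)^\beta\bigr]$, and every natural way of bounding this (H\"older with exponents $(1+\epsilon)/\epsilon$ and $1+\epsilon$; bounding the max by $(\sum(C_iR_i)^\beta)^\epsilon$ and then estimating $E[(\sum(C_iR_i)^\beta)^{1+\epsilon}]$ via Marcinkiewicz--Zygmund or Lemma~\ref{L.Alpha_Moments}) forces a term involving $E[R^{\beta(1+\epsilon)}]=E[R^\alpha]$, which may be infinite---indeed that is precisely the regime of interest. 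The hypothesis gives only $E[R^\gamma]<\infty$ for $\gamma<\alpha$, and your route does not stay below $\alpha$.

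The paper's proof of finiteness takes a genuinely different tack that avoids this trap. It conditions on ${\bf X}=(N,C_1,C_2,\dots)$ and uses the conditional \emph{independence} of the $C_iR_i$ to write the survival function of the max as a product $\prod_i(1-\overline F(t/C_i))$, then bounds $1-x\le e^{-x}$, so that the conditional integrand becomes $g\bigl(\sum_i\overline F(t/C_i)\bigr)$ with $g(x)=e^{-x}-1+x$. Now Markov's inequality $\overline F(t/C_i)\le E[R^\beta]C_i^\beta t^{-\beta}$ with $\beta=\alpha/(1+\epsilon)$ replaces $R$ by a \emph{constant} $r=E[R^\beta]$, which is finite since $\beta<\alpha$. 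The monotonicity of $g$ then gives a bound by $g(rS_\beta t^{-\beta})$, $S_\beta=\sum_i C_i^\beta$, and after the change of variables $u=rS_\beta t^{-\beta}$ the $t$-integral factors as $(rS_\beta)^{1+\epsilon}$ times a finite numerical integral (using $1<\alpha/\beta=1+\epsilon<2$). Taking expectation over ${\bf X}$ yields exactly the assumed $E[(\sum_i C_i^{\alpha/(1+\epsilon)})^{1+\epsilon}]<\infty$. The point is that by passing through $\overline F$ and Markov \emph{before} any $(1+\epsilon)$-th power appears, the $R$-dependence is frozen at the $\beta$-moment and never escalates to $\alpha$.
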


\begin{lem} \label{L.ExtraQ}
Let $(Q, N, C_1, C_2, \dots)$ be a nonnegative vector with $N \in \mathbb{N} \cup \{\infty\}$ and let $\{R_i\}$ be a sequence of iid random variables, independent of $(Q, N, C_1, C_2, \dots)$. Suppose that for some $\alpha > 1$ we have $E[Q^\alpha] < \infty$, $E\left[ \left( \sum_{i=1}^N C_i \right)^\alpha \right] < \infty$,  $E[R^\beta] < \infty$ for any $0 < \beta < \alpha$, and $\sum_{i=1}^N C_i R_i < \infty$ a.s. Then
$$E \left[ \left( \sum_{i=1}^N C_i R_i + Q  \right)^\alpha - \sum_{i=1}^N \left(  C_i R_i \right)^\alpha   \right]  < \infty.$$
\end{lem}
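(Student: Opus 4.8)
The plan is to expand the $\alpha$-th power of the sum $\sum_{i=1}^N C_i R_i + Q$ and bound the cross terms. First I would observe that it suffices to prove the claim with $Q$ replaced by $C_{N+1} R_{N+1}$ where we artificially set $C_{N+1} = 1$ and $R_{N+1}$ an independent copy, because $E[Q^\alpha] < \infty$ is handled exactly like the $C_i R_i$ terms (indeed more easily, since the bound on $\left(\sum_{i=1}^N C_i + 1\right)^\alpha$ follows from $E\left[\left(\sum_{i=1}^N C_i\right)^\alpha\right] < \infty$ via $(a+1)^\alpha \le 2^{\alpha-1}(a^\alpha + 1)$). So the essential task is to show
$$
E\left[ \left( \sum_{i=1}^{N+1} C_i R_i \right)^\alpha - \sum_{i=1}^{N+1} (C_i R_i)^\alpha \right] < \infty
$$
under $\rho$-type conditions. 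For this I would invoke Lemma \ref{L.Alpha_Moments} directly, with the sequence $\{C_i\}$ there taken to be $\{C_1, \dots, C_N, 1\}$ and $\{Y_i\}$ the iid copies $\{R_1, \dots, R_N, R_{N+1}\}$, and $p = \lceil \alpha \rceil$. The hypothesis $E[R^\beta] < \infty$ for all $0 < \beta < \alpha$ gives in particular $E[R^{p-1}] < \infty$ since $p - 1 = \lceil \alpha \rceil - 1 < \alpha$, so $\left(E[R^{p-1}]\right)^{\alpha/(p-1)} < \infty$. Thus Lemma \ref{L.Alpha_Moments} yields
$$
E\left[ \left( \sum_{i=1}^{N+1} C_i R_i \right)^\alpha - \sum_{i=1}^{N+1} (C_i R_i)^\alpha \right] \le \left(E[R^{p-1}]\right)^{\alpha/(p-1)} \, E\left[ \left( \sum_{i=1}^{N+1} C_i \right)^\alpha \right],
$$
and the right-hand side is finite because $E\left[\left(\sum_{i=1}^N C_i + 1\right)^\alpha\right] \le 2^{\alpha-1}\left(E\left[\left(\sum_{i=1}^N C_i\right)^\alpha\right] + 1\right) < \infty$.

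It remains to translate this back to the original statement with $Q$ rather than $C_{N+1} R_{N+1}$. Here I would simply note that $\left(\sum_{i=1}^N C_i R_i + Q\right)^\alpha - \sum_{i=1}^N (C_i R_i)^\alpha$ can be compared term-by-term: writing $S = \sum_{i=1}^N C_i R_i$, we have $(S + Q)^\alpha - \sum_i (C_i R_i)^\alpha = \left[(S+Q)^\alpha - S^\alpha\right] + \left[S^\alpha - \sum_i (C_i R_i)^\alpha\right]$. The second bracket is nonnegative and has finite expectation by applying Lemma \ref{L.Alpha_Moments} as above (without the extra coordinate), so finiteness reduces to controlling $E\left[(S+Q)^\alpha - S^\alpha\right]$. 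Using the elementary inequality $(a+b)^\alpha - a^\alpha \le \alpha \, 2^{\alpha-1}\left(a^{\alpha-1} b + b^\alpha\right)$ for $a, b \ge 0$ (or an analogous bound from the binomial/Minkowski estimate), together with $S$ and $Q$ being independent, we get $E\left[(S+Q)^\alpha - S^\alpha\right] \le \alpha 2^{\alpha-1}\left(E[S^{\alpha-1}] E[Q] + E[Q^\alpha]\right)$, and $E[S^{\alpha-1}] < \infty$ follows from $E\left[\left(\sum_i C_i\right)^{\alpha-1}\right] \le E\left[\left(\sum_i C_i\right)^\alpha\right]^{(\alpha-1)/\alpha} < \infty$ (Jensen, since $\alpha - 1 < \alpha$) and $E[R^{\alpha-1}] < \infty$ combined with Minkowski when $\alpha - 1 \ge 1$, or subadditivity of $x^{\alpha-1}$ when $\alpha - 1 < 1$.

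The main obstacle I anticipate is purely bookkeeping: making sure the decomposition of $(S+Q)^\alpha - \sum_i (C_iR_i)^\alpha$ into manifestly integrable nonnegative pieces is done cleanly, and that $\sum_{i=1}^N C_i R_i < \infty$ a.s. (assumed) together with the moment bounds legitimately justifies all the expectations being finite rather than merely $\infty - \infty$. There is no deep difficulty once Lemma \ref{L.Alpha_Moments} is in hand — the whole lemma is essentially a corollary of it, with the role of $Q$ absorbed either as an extra unit weight or via a one-line splitting of the power. I would present the extra-coordinate version as the cleanest route since it lets Lemma \ref{L.Alpha_Moments} do all the work in a single application.
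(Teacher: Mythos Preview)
Your splitting $(S+Q)^\alpha - \sum_i (C_iR_i)^\alpha = \bigl[(S+Q)^\alpha - S^\alpha\bigr] + \bigl[S^\alpha - \sum_i (C_iR_i)^\alpha\bigr]$ and the use of Lemma~\ref{L.Alpha_Moments} for the second bracket are exactly what the paper does. The gap is in your treatment of the first bracket.

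You write ``together with $S$ and $Q$ being independent''. This is false: $Q$ is a coordinate of the vector $(Q,N,C_1,C_2,\dots)$, so $Q$ and $(N,C_1,C_2,\dots)$ may be arbitrarily dependent, and hence so are $Q$ and $S=\sum_{i=1}^N C_i R_i$. You therefore cannot factor $E[S^{\alpha-1}Q]=E[S^{\alpha-1}]E[Q]$. The ``extra coordinate'' route has the same underlying problem: Lemma~\ref{L.Alpha_Moments} requires the $Y_i$ to be iid, and $Q$ is neither distributed as $R$ nor independent of the weights $(N,C_1,\dots)$, so you cannot simply append it as a term $C_{N+1}R_{N+1}$ with $C_{N+1}=1$. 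There is no domination or coupling that lets you ``translate back'' to $Q$ from $R_{N+1}$.

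The paper repairs this by conditioning on ${\bf X}=(Q,N,C_1,\dots)$: since the $R_i$ are independent of ${\bf X}$, one gets bounds of the form $E[S^{\alpha-i}\,|\,{\bf X}]\le K_R\bigl(\sum_j C_j\bigr)^{\alpha-i}$ (via Jensen or Lemma~\ref{L.Alpha_Moments} applied conditionally), reducing the cross terms to $E\bigl[Q^i\bigl(\sum_j C_j\bigr)^{\alpha-i}\bigr]$, which is then controlled by H\"older's inequality using only $E[Q^\alpha]<\infty$ and $E\bigl[(\sum_j C_j)^\alpha\bigr]<\infty$. Your single-term inequality $(a+b)^\alpha-a^\alpha\le C_\alpha(a^{\alpha-1}b+b^\alpha)$ is in fact cleaner than the paper's iterated expansion into $\sum_{i=1}^{p-1}E[S^{\alpha-i}Q^i]$, and it would go through if you replace the independence step by this conditioning-plus-H\"older argument; but as written, the factorization $E[S^{\alpha-1}Q]=E[S^{\alpha-1}]E[Q]$ is unjustified and the proof does not close.
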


\bigskip

\begin{proof}[Proof of Theorem \ref{T.LinearRecursion}]
By Lemma \ref{L.Moments_R}, we know that $E[R^\beta] < \infty$ for any $0 < \beta < \alpha$. To verify that $E\left[ \sum_{i=1}^N C_i^\gamma \right] < \infty$ for some $0 \leq \gamma < \alpha$ note that if $\alpha > 1$ we have, by the assumptions of the theorem and Jensen's inequality, 
$$E\left[ \sum_{i=1}^N C_i^\gamma \right] \leq E\left[ \left( \sum_{i=1}^N C_i \right)^\gamma \right] \leq \left( E\left[ \left(\sum_{i=1}^N C_i \right)^\alpha \right] \right)^{\gamma/\alpha} < \infty$$
for any $1 \leq \gamma < \alpha$. If $0 < \alpha \leq 1$, then for $\gamma = \alpha (1+\epsilon/2)/(1+\epsilon) < \alpha$ we have
$$E\left[ \sum_{i=1}^N C_i^\gamma \right] \leq E\left[ \left( \sum_{i=1}^N C_i^{\alpha/(1+\epsilon)} \right)^{1+\epsilon/2} \right] \leq \left( E\left[ \left( \sum_{i=1}^N C_i^{\alpha/(1+\epsilon)} \right)^{1+\epsilon} \right]  \right)^\frac{1+\epsilon/2}{1+\epsilon} < \infty.$$

The statement of the theorem with the first expression for $H$ will follow from Theorem \ref{T.NewGoldie} once we prove that condition \eqref{eq:Goldie_condition} holds.  To this end, define
$$R^* = \sum_{i=1}^N C_i R_i + Q.$$
Then, 
\begin{align*}
\left| P(R>t) - E\left[ \sum_{i=1}^N \Indicator(C_iR_i > t ) \right] \right|  &\leq \left| P(R > t) - P\left( \max_{1\leq i \leq N} C_i R_i > t \right) \right|   \\
&\hspace{5mm} + \left| P\left( \max_{1\leq i \leq N} C_i R_i > t  \right)   - E\left[ \sum_{i=1}^N \Indicator(C_iR_i > t ) \right]  \right|.
\end{align*}
Since $R \stackrel{\mathcal{D}}{=} R^* \geq \max_{1\leq i\leq N} C_i R_i$, the first absolute value disappears. For the second one, note that 
\begin{align*}
&E\left[ \sum_{i=1}^N \Indicator(C_iR_i > t ) \right]  -  P\left(  \max_{1\leq i \leq N} C_i R_i > t  \right)  \\
&= E\left[ \sum_{i=1}^N \Indicator(C_iR_i > t ) \right]  - E\left[ \Indicator\left(  \max_{1\leq i \leq N} C_i R_i > t   \right) \right] \geq 0.
\end{align*}
Now it follows that
\begin{align}
&\left| P(R > t) - E\left[ \sum_{i=1}^N \Indicator(C_iR_i > t ) \right] \right|  \notag \\
&\leq P(R > t) - P\left( \max_{1\leq i \leq N} C_i R_i > t \right) \notag \\
&\hspace{5mm} + E\left[ \sum_{i=1}^N \Indicator(C_iR_i > t ) \right] - P\left( \max_{1\leq i \leq N} C_i R_i > t \right)  . \label{eq:Term2}
\end{align}

Note that the integral corresponding to \eqref{eq:Term2} is finite by Lemma \ref{L.Max_Approx} if we show that the assumptions of Lemma~\ref{L.Max_Approx} are satisfied when $\alpha > 1$. Note that in this case we can choose $\epsilon > 0$ such that $\alpha/(1+\epsilon) \geq 1$ and use the inequality 
\begin{equation} \label{eq:concaveSum}
\sum_{i=1}^k x_i^\beta \le \left( \sum_{i=1}^k x_i \right)^\beta  
\end{equation}
for $\beta \ge 1$, $x_i \geq 0$, $k \leq \infty$ to obtain
$$E\left[ \left( \sum_{i=1}^N C_i^{\alpha/(1+\epsilon)} \right)^{1+\epsilon} \right] \leq E\left[ \left( \sum_{i=1}^N C_i \right)^\alpha \right]  < \infty.$$
Therefore, it only remains to show that
\begin{equation} \label{eq:RecVsMax}
\int_0^\infty  \left( P(R > t) - P\left( \max_{1\leq i \leq N} C_i R_i > t \right) \right) t^{\alpha-1} \, dt  < \infty.
\end{equation}

To see this, note that $R \stackrel{\mathcal{D}}{=} R^*$ and $1(R^* > t) - 1(\max_{1\leq i\leq N} C_i R_i > t) \geq 0$, and thus, by Fubini's theorem, we have
\begin{equation*}
\int_0^\infty  \left( P(R > t) - P\left( \max_{1\leq i \leq N} C_i R_i > t \right) \right) t^{\alpha-1} \, dt = \frac{1}{\alpha} E \left[ (R^*)^\alpha - \left( \max_{1\leq i \leq N} C_i R_i \right)^\alpha   \right].
\end{equation*}

If $0 < \alpha \leq 1$, we apply \eqref{eq:concaveSum} to obtain
$$E \left[ (R^*)^\alpha - \left( \max_{1\leq i \leq N} C_i R_i \right)^\alpha   \right] \leq E \left[ Q^\alpha + \sum_{i=1}^N (C_iR_i)^\alpha - \left( \max_{1\leq i \leq N} C_i R_i \right)^\alpha   \right],$$ 
which is finite by Lemma \ref{L.Max_Approx} and the assumption $E[Q^\alpha] < \infty$. 

If $\alpha > 1$, we have $\left(\sum_{i=1}^k x_i \right)^\alpha \geq \sum_{i=1}^k x_i^\alpha$, $x_i \geq 0$, $k \leq \infty$, implying that we can split the expectation as follows
\begin{align*}
E \left[ (R^*)^\alpha - \left( \max_{1\leq i \leq N} C_i R_i \right)^\alpha   \right]  &=   E \left[ (R^*)^\alpha - \sum_{i=1}^N  \left(C_i R_i \right)^\alpha   \right]  \\
&\hspace{5mm} +   E \left[  \sum_{i=1}^N  \left(C_i R_i \right)^\alpha - \left( \max_{1\leq i \leq N} C_i R_i \right)^\alpha   \right],
\end{align*}
which can be done since both expressions inside the expectations on the right-hand side are nonnegative. The first expectation is finite by Lemma \ref{L.ExtraQ} and the second expectation is again finite by Lemma \ref{L.Max_Approx}. 

Finally, applying Theorem \ref{T.NewGoldie} gives 
$$P(R > t) \sim H t^{-\alpha},$$
where $H = \left( E\left[ \sum_{j=1}^N C_j^\alpha \log C_j  \right] \right)^{-1} \int_{0}^\infty v^{\alpha-1} \left( P(R > v) - E\left[ \sum_{j=1}^{N} \Indicator(C_{j} R > v ) \right]    \right) dv$. 

To obtain the second expression for $H$ note that
\begin{align}
&\int_{0}^\infty v^{\alpha-1} \left( P(R > v) - E\left[ \sum_{j=1}^{N} \Indicator(C_{j} R > v) \right]    \right) dv \notag \\
&= \int_0^\infty v^{\alpha-1}   E\left[\Indicator\left(\sum_{i=1}^N C_i R_i + Q > v \right) - \sum_{i=1}^N \Indicator(C_iR_i > v)  \right] \, dv \notag \\ 
&= E \left[   \int_0^\infty v^{\alpha-1}  \left(  \Indicator\left(\sum_{i=1}^N C_i R_i + Q > v \right) - \sum_{i=1}^N \Indicator(C_iR_i > v) \right) dv  \right] \label{eq:Fubini} \\
&= E \left[   \int_0^{\sum_{i=1}^N C_i R_i + Q} v^{\alpha-1} dv  -  \sum_{i=1}^N \int_0^{C_i R_i} v^{\alpha-1}  dv  \right] \label{eq:DiffIntegrals} \\
&= \frac{1}{\alpha} E\left[ \left( \sum_{i=1}^N C_i R_i + Q \right)^\alpha - \sum_{i=1}^N (C_i R_i)^\alpha   \right] , \notag
\end{align}
where \eqref{eq:Fubini} is justified by Fubini's Theorem and the integrability of 
\begin{align*}
&v^{\alpha-1} \left| \Indicator\left(\sum_{i=1}^N C_i R_i + Q > v \right) - \sum_{i=1}^N \Indicator(C_iR_i > v) \right| \\
&\leq v^{\alpha-1} \left( \Indicator\left(\sum_{i=1}^N C_i R_i + Q > v \right) - \Indicator\left( \max_{1\leq i \leq N} C_i R_i > v \right) \right) \\
&\hspace{5mm} + v^{\alpha-1} \left( \sum_{i=1}^N \Indicator(C_iR_i > v) - \Indicator\left( \max_{1\leq i \leq N} C_i R_i > v \right) \right),
\end{align*}
which is a consequence of \eqref{eq:RecVsMax} and Lemma \ref{L.Max_Approx}; and \eqref{eq:DiffIntegrals} follows from the observation that 
$$v^{\alpha-1} \Indicator\left(\sum_{i=1}^N C_i R_i + Q > v\right) \qquad \text{and} \qquad v^{\alpha-1} \sum_{i=1}^N \Indicator(C_iR_i > v)$$
are each almost surely absolutely integrable with respect to $v$ as well. 

This completes the proof.
\end{proof}

As indicated earlier, when $\alpha\ge 1$ is an integer, we can obtain the following explicit expression for $H$.

\begin{cor} \label{C.explicit}
For integer $\alpha \geq 1$, and under the same assumptions of Theorem \ref{T.LinearRecursion}, the constant $H$ can be explicitly computed as a function of $E[R^k], E[C^k], E[Q^k]$, $0 \leq k \leq \alpha-1$. In particular, for $\alpha = 1$,
$$H = \frac{E[Q]}{E\left[ \sum_{i=1}^N C_i \log C_i  \right] },$$
and for $\alpha = 2$,
\begin{align*}
H &= \frac{E[Q^2] + 2 E[R] E\left[ Q \sum_{i=1}^N C_i \right] + 2 (E[R])^2 E\left[ \sum_{i=1}^N \sum_{j=i+1}^N C_i C_j  \right] }{2 E\left[ \sum_{i=1}^N C_i^2 \log C_i  \right] },\\
&E[R] = \frac{E[Q]}{1-E\left[ \sum_{i=1}^N C_i \right]}.
\end{align*}
\end{cor}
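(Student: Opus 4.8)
The plan is to start from the second expression for $H$ established in Theorem~\ref{T.LinearRecursion}, namely
\[
H = \frac{E\!\left[ \left( \sum_{i=1}^N C_i R_i + Q \right)^\alpha - \sum_{i=1}^N (C_i R_i)^\alpha \right]}{\alpha\, E\!\left[ \sum_{i=1}^N C_i^\alpha \log C_i \right]},
\]
and to expand the numerator using the multinomial theorem, which is legitimate precisely because $\alpha$ is a positive integer. First I would write $(\sum_{i=1}^N C_i R_i + Q)^\alpha$ as a finite sum of monomials $\binom{\alpha}{k_0, k_1, \dots} Q^{k_0} \prod (C_i R_i)^{k_i}$; the terms in which a single $C_i R_i$ carries the full exponent $\alpha$ and $Q$ carries exponent $0$ are exactly $\sum_{i=1}^N (C_i R_i)^\alpha$, so they cancel against the subtracted sum, leaving only "mixed" monomials in which at least two of the factors $Q, C_1 R_1, C_2 R_2, \dots$ appear with positive exponent. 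Each surviving monomial involves at most $\alpha$ distinct indices among the $R_i$, each raised to a power strictly less than $\alpha$.

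Next I would take expectations term by term, using independence in two stages: the $\{R_i\}$ are iid and independent of $(Q, N, C_1, C_2, \dots)$. For a monomial involving distinct indices $i_1 < \dots < i_m$ with exponents $k_1, \dots, k_m$ (all $\geq 1$, summing with $k_0$ to $\alpha$), independence factorizes the expectation into $E[R^{k_1}] \cdots E[R^{k_m}]$ times $E[Q^{k_0} C_{i_1}^{k_1} \cdots C_{i_m}^{k_m}]$, and each $E[R^{k_j}]$ is finite because $1 \le k_j \le \alpha-1 < \alpha$ and Lemma~\ref{L.Moments_R} gives finiteness of all moments below $\alpha$. Summing over the index configurations and using symmetry/exchangeability of the roles of the $\{R_i\}$, this produces a finite sum whose coefficients are multinomial numbers, whose $R$-factors are products of $E[R^k]$ with $0 \le k \le \alpha-1$, and whose remaining factor is an expectation of the form $E\big[ Q^{k_0} \sum C_{i_1}^{k_1}\cdots C_{i_m}^{k_m}\big]$ — the latter being exactly what is meant by "a function of $E[C^k]$ type quantities" once one notes these can themselves be reduced (in the iid-$C_i$ case) to ordinary moments, but in general are kept as joint moments of the $C_i$. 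That all these expectations are finite follows from the integrability already proven in the course of Theorem~\ref{T.LinearRecursion}: the numerator as a whole is finite, and each monomial in the expansion is nonnegative, so every term is finite. This establishes the general claim.

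For the explicit formulas I would just specialize. For $\alpha = 1$ the numerator of $H$ is $E[\sum_i C_i R_i + Q - \sum_i C_i R_i] = E[Q]$, giving $H = E[Q]/E[\sum_i C_i \log C_i]$; also taking expectations in $R \stackrel{\mathcal D}{=} \sum_i C_i R_i + Q$ and using Wald-type independence yields $E[R] = E[Q]/(1 - E[\sum_i C_i])$ (finite since $E[\sum_i C_i] < 1$ under the $\alpha>1$ hypothesis, or to be stated only in the regime where $E[R]<\infty$). For $\alpha = 2$, expand $(\sum_i C_i R_i + Q)^2 = \sum_i (C_iR_i)^2 + 2\sum_{i<j} C_iC_jR_iR_j + 2Q\sum_i C_i R_i + Q^2$; subtracting $\sum_i (C_iR_i)^2$ and taking expectations, with $E[R_iR_j] = (E[R])^2$ for $i \neq j$ and $E[R_i] = E[R]$, gives numerator $2(E[R])^2 E[\sum_{i<j} C_iC_j] + 2E[R]\,E[Q\sum_i C_i] + E[Q^2]$, hence the stated formula after dividing by $2E[\sum_i C_i^2 \log C_i]$.

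The main obstacle is bookkeeping rather than anything deep: one must argue carefully that the multinomial expansion and the interchange of expectation with the (finite) sum of monomials are valid even when $N = \infty$, i.e. that each individual monomial sum such as $\sum_{i<j} C_iC_j$ or $\sum_i C_i^{k}$ has finite expectation after multiplication by the relevant $R$-moments. This is handled by dominating every mixed monomial by the full nonnegative quantity $(\sum_i C_iR_i+Q)^\alpha - \sum_i (C_iR_i)^\alpha$, whose finiteness in expectation was already shown inside the proof of Theorem~\ref{T.LinearRecursion} (via Lemmas~\ref{L.Max_Approx} and~\ref{L.ExtraQ}), so monotone/dominated convergence legitimizes the term-by-term evaluation; no new estimates are needed.
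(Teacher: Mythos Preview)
Your proposal is correct and follows exactly the approach the paper takes: the paper's entire proof is the one-line remark that the result ``follows directly from multinomial expansions of the second expression for $H$ in Theorem~\ref{T.LinearRecursion}.'' Your write-up simply spells out that multinomial expansion and the independence-based factorization in more detail, together with a justification for the $N=\infty$ interchange that the paper leaves implicit.
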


\begin{proof}
The proof follows directly from multinomial expansions of the second expression for $H$ in Theorem~\ref{T.LinearRecursion}.
\end{proof}

\subsection{The homogeneous recursion} \label{SS.Homogeneous}

In this section we briefly describe how the methodology developed in the previous sections can 
be applied to study the critical, $E\left[ \sum_{i=1}^N C_i \right] = 1$, homogeneous linear recursion
\begin{equation} \label{eq:LinearHomogeneous}
R \stackrel{\mathcal{D}}{=} \sum_{i=1}^N C_i R_i,
\end{equation}
where $(N, C_1, C_2, \dots)$ is a nonnegative random vector with $N \in \mathbb{N} \cup \{\infty\}$ and $\{ R_i\}_{i \in \mathbb{N}}$ is a sequence of iid random variables independent of $(N, C_1, C_2, \dots)$ having the same distribution as $R$. This equation has been studied extensively in the literature under various different assumptions; for recent results see
 \cite{Liu_00, Iksanov_04, Alsm_Kuhl_07} and the references therein. 
 
Based on the model from Section~\ref{S.LinearRec} we can construct a solution to \eqref{eq:LinearHomogeneous} as follows. 
Consider the process $\{W_n\}_{n\ge 0}$ defined by \eqref{eq:W_k} with $Q_{\bf i}\equiv 1$. Then,  the $\{W_n\}$ satisfy in distribution 
the homogeneous recursion in \eqref{eq:WnRec} and, given that $E\left[ \sum_{i=1}^N C_i \right] = 1$,
we have $E[W_n]=1$. Hence, $\{W_n\}_{n\ge 0}$ is a nonnegative martingale and by the martingale convergence theorem $W_n\to R$ a.s. with $E[R]\leq 1$.
Next, provided that  
$$
E\left[ \sum_{i=1}^N C_i \log C_i \right] < 0 \quad\text{ and }\quad 
E\left[ \left( \sum_{i=1}^N C_i \right) \log^+ \left( \sum_{i=1}^N C_i \right) \right] < \infty$$ 
it can be shown that $E[R]=1$, see Theorem 1.1(d) in \cite{Alsm_Kuhl_07} (see also Theorem 2 in \cite{Liu_00});
$\log^+x=\max(\log x,0)$. 
Furthermore, as argued in equation (1.9) of \cite{Alsm_Kuhl_07}, it can easily be shown  that this $R$ is a solution to \eqref{eq:LinearHomogeneous}.
Note that the same construction of the solution $R$ on a branching tree was given in \cite{Alsm_Kuhl_07} and \cite{Liu_00}.
Since the solutions to \eqref{eq:LinearHomogeneous} are scale invariant, this construction also shows that for any $m>0$ there is a solution $R$ 
with mean $m$; or equivalently, it is enough to study the solutions with mean $1$. 
Moreover, under additional assumptions it can be shown that this constructed $R$ is the only solution with mean $1$, e.g. see \cite{Liu_98,Liu_00,Iksanov_04}.
However, it is not the objective of this section to study the uniqueness of this solution, rather we focus on studying the tail behavior of any such possible solution
(since our Theorem~~\ref{T.NewGoldie} does not require the uniqueness of $R$).
As a side note, we point out that  \eqref{eq:LinearHomogeneous} can have solutions if $E\left[ \sum_{i=1}^N C_i^\beta \right]=1$ 
for some $0<\beta<1$, as studied in \cite{Liu_98,Iksanov_04}.

A version of the following theorem, with a possibly less explicit constant, was previously proved in Theorem~2.2 in \cite{Liu_00} 
and Proposition~7 in \cite{Iksanov_04}; they also study the lattice case.  Regarding the lattice case, as pointed out earlier in the remark after 
Theorem~\ref{T.NewGoldie}, all the results in this paper can be developed for this case as well by using the corresponding renewal theorem. 

\begin{thm} \label{T.LinearHomog}
Suppose that there exists $j \geq 1$ with $P(N\ge j,C_j>0)>0$ such that the measure $P(\log C_j\in du, C_j > 0, N\ge j)$ is nonarithmetic.
Suppose further that for some $\alpha > 1$, $E\left[ \left( \sum_{i=1}^N C_i \right)^\alpha \right] < \infty$, $E \left[ \sum_{i=1}^N C_i^\alpha \log^+ C_i \right] < \infty$ and  $E\left[ \sum_{i=1}^N C_i \right] = E \left[ \sum_{i=1}^N C_i^\alpha \right] = 1$. Then, equation \eqref{eq:LinearHomogeneous} has a solution $R$ with $0<E[R] <\infty$ such that
$$P(R > t) \sim H t^{-\alpha}, \qquad t \to \infty,$$
where $0 \leq H < \infty$ is given by
\begin{align*}
H &= \frac{1}{E\left[ \sum_{i=1}^N C_i^\alpha \log C_i  \right] } \int_{0}^\infty v^{\alpha-1} \left( P(R > v) - E\left[ \sum_{i=1}^{N} \Indicator(C_{i} R > v ) \right]    \right) dv \\
&= \frac{E\left[ \left( \sum_{i=1}^N C_i R_i \right)^\alpha - \sum_{i=1}^N (C_i R_i )^\alpha \right]}{\alpha E\left[ \sum_{i=1}^N C_i^\alpha \log C_i  \right] }.
\end{align*}
Furthermore, if $P(\tilde{N} \geq 2) >0$, $\tilde{N}= \sum_{i=1}^N 1(C_i>0)$, then $H>0$.
\end{thm}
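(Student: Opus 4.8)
\textbf{Proof proposal for Theorem \ref{T.LinearHomog}.}

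The plan is to reduce the statement to Theorem \ref{T.NewGoldie} exactly as was done in the proof of Theorem \ref{T.LinearRecursion}, the only new ingredient being the construction and moment estimates for the homogeneous solution $R$, together with the strict positivity claim at the end. First I would establish existence of a solution $R$ with $0 < E[R] < \infty$: take the martingale $\{W_n\}_{n\ge 0}$ built from \eqref{eq:W_k} with $Q_{\bf i} \equiv 1$, which satisfies the homogeneous recursion \eqref{eq:WnRec} in distribution and has $E[W_n] = 1$ under $E[\sum_{i=1}^N C_i] = 1$; the martingale convergence theorem gives $W_n \to R$ a.s.\ with $E[R] \le 1$, and the biggins-type condition $E[\sum_{i=1}^N C_i \log C_i] < 0$ (which holds here, being the mean $\mu = E[\sum_i C_i^\alpha \log C_i] > 0$ transported to $\beta = 1$ by convexity of $\beta \mapsto \rho_\beta$, since $\rho_1 = \rho_\alpha = 1$ forces the left derivative at $1$ to be negative) together with $E[(\sum_i C_i)\log^+(\sum_i C_i)] < \infty$ (implied by $E[(\sum_i C_i)^\alpha] < \infty$, $\alpha > 1$) gives $E[R] = 1$ by Theorem 1.1(d) of \cite{Alsm_Kuhl_07}; the fact that this $R$ solves \eqref{eq:LinearHomogeneous} is as in equation (1.9) of \cite{Alsm_Kuhl_07}.

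Next I would check the hypotheses of Theorem \ref{T.NewGoldie} with this $R$ and $Q \equiv 0$. The nonarithmeticity, $0 < \mu < \infty$, and $E[\sum_i C_i^\alpha] = 1$ are assumed; $E[\sum_i C_i^\gamma] < \infty$ for some $1 \le \gamma < \alpha$ follows from $E[(\sum_i C_i)^\alpha] < \infty$ and Jensen exactly as in the proof of Theorem \ref{T.LinearRecursion}; and $E[R^\beta] < \infty$ for all $0 < \beta < \alpha$ follows from Lemma \ref{L.Moments_R} applied with $Q \equiv 0$ (or directly from the moment bounds on $W_n$ in Lemma \ref{L.GeneralMoment}). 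It remains to verify Goldie's integrability condition \eqref{eq:Goldie_condition}. Since $R \stackrel{\mathcal{D}}{=} \sum_{i=1}^N C_i R_i \ge \max_{1\le i\le N} C_i R_i$, I would bound the integrand by the same two nonnegative terms as in \eqref{eq:Term2}: the ``recursion versus max'' term, whose integral equals $\frac{1}{\alpha} E[(\sum_i C_i R_i)^\alpha - (\max_i C_i R_i)^\alpha]$ and which I split (using $(\sum x_i)^\alpha \ge \sum x_i^\alpha$ for $\alpha>1$) into $E[(\sum_i C_i R_i)^\alpha - \sum_i (C_i R_i)^\alpha]$, finite by Lemma \ref{L.ExtraQ} with $Q \equiv 0$, plus $E[\sum_i (C_i R_i)^\alpha - (\max_i C_i R_i)^\alpha]$, finite by Lemma \ref{L.Max_Approx} (whose hypothesis $E[(\sum_i C_i^{\alpha/(1+\epsilon)})^{1+\epsilon}] < \infty$ holds by choosing $\epsilon$ with $\alpha/(1+\epsilon) \ge 1$ and applying \eqref{eq:concaveSum}); and the ``max versus sum of indicators'' term, again finite by Lemma \ref{L.Max_Approx}. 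Theorem \ref{T.NewGoldie} then yields $P(R>t) \sim H t^{-\alpha}$ with the first formula for $H$, and the second formula follows by the same Fubini computation as in \eqref{eq:Fubini}--\eqref{eq:DiffIntegrals} with $Q \equiv 0$.

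The main obstacle, and the genuinely new part, is the final assertion $H > 0$ when $P(\tilde N \ge 2) > 0$ with $\tilde N = \sum_{i=1}^N \Indicator(C_i > 0)$. Here one cannot use the trick from Theorem \ref{T.LinearRecursion} of bounding below by $E[Q^\alpha]$, since $Q \equiv 0$. Instead, from the second expression $H = (\alpha \mu)^{-1} E[(\sum_i C_i R_i)^\alpha - \sum_i (C_i R_i)^\alpha]$ with $\mu > 0$, it suffices to show the expectation is strictly positive, i.e.\ that on an event of positive probability one has $(\sum_i C_i R_i)^\alpha > \sum_i (C_i R_i)^\alpha$; since $\alpha > 1$ and $x \mapsto x^\alpha$ is strictly superadditive on $(0,\infty)$, this holds as soon as at least two of the terms $C_i R_i$ are strictly positive. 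On $\{\tilde N \ge 2\}$ there are at least two indices $i$ with $C_i > 0$, and $P(R > 0) > 0$ because $E[R] = 1 > 0$; using the independence of the iid copies $\{R_i\}$ from $(N, C_1, C_2, \dots)$, the event that two such $C_i$ are positive \emph{and} the corresponding $R_i$ are positive has positive probability, so the integrand of $H$ is strictly positive on a set of positive probability, giving $H > 0$. I would write this out carefully, being explicit that $\tilde N \ge 2$ is measurable with respect to $(N, C_1, \dots)$ and that we may condition on it before invoking $P(R_i > 0) > 0$ for the two relevant copies.
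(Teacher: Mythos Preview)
Your overall structure matches the paper's, but there are two genuine gaps in the verification of the hypotheses of Theorem~\ref{T.NewGoldie}.

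First, you write that ``$0 < \mu < \infty$ \dots\ are assumed''. They are not: the theorem only assumes $E\bigl[\sum_i C_i^\alpha \log^+ C_i\bigr] < \infty$ together with $\rho_1 = \rho_\alpha = 1$. The paper derives $0 < E\bigl[\sum_i C_i^\alpha \log C_i\bigr] < \infty$ (and $E\bigl[\sum_i C_i \log C_i\bigr] < 0$) from \emph{strict} convexity of $\varphi(\theta) = E\bigl[\sum_i C_i^\theta\bigr]$ on $[1,\alpha]$. Strict convexity requires $P(C_i \in \{0,1\},\,1\le i\le N) < 1$, which the paper obtains from the nonarithmetic hypothesis; you invoke convexity but never justify that it is strict, and without strictness $\varphi$ could be identically $1$ on $[1,\alpha]$, in which case $\mu = 0$.

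Second, and more seriously, your appeal to Lemma~\ref{L.Moments_R} (or Lemma~\ref{L.GeneralMoment}) for $E[R^\beta] < \infty$, $0<\beta<\alpha$, does not work here. Both lemmas require $\rho \vee \rho_\beta < 1$ for $\beta \ge 1$, but in the homogeneous critical case $\rho = \rho_1 = 1$, so the hypotheses fail. (Moreover, Lemma~\ref{L.Moments_R} concerns the random variable $R = \sum_{k\ge 0} W_k$ of \eqref{eq:ExplicitConstr}, not the martingale limit $R = \lim_n W_n$ you constructed; with $Q\equiv 0$ the former is identically zero.) The paper instead cites Theorem~3.1 of \cite{Alsm_Kuhl_07} (see also \cite{Iksanov_04,Liu_00}), whose hypotheses are precisely the strict convexity of $\varphi$ established above. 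If you want to avoid the external citation, you would need to redo the recursion in the proof of Lemma~\ref{L.GeneralMoment} in the boundary case $\rho = 1$, showing that $E[W_n^\beta]$ stays \emph{bounded} (rather than geometrically decaying) when $\rho_\beta < 1$, and then pass to the limit via Fatou; this is feasible but is not what the stated lemmas give you.

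Your argument for $H > 0$ is correct and in fact simpler than the paper's: where you use only the strict superadditivity $(x_1+x_2)^\alpha > x_1^\alpha + x_2^\alpha$ for $x_1,x_2>0$, $\alpha>1$, together with $P(R>0)>0$ and independence, the paper proves the quantitative inequality $(x_1+x_2)^\alpha - x_1^\alpha - x_2^\alpha \ge c\,(\min\{x_1,x_2\})^\alpha$ and obtains an explicit lower bound on $H$ in terms of $E\bigl[(\min\{R_1,R_2\})^\alpha\bigr]$. Your version suffices for the bare positivity claim.
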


\begin{proof} 
By the assumptions, the function $\varphi(\theta) \triangleq E\left[ \sum_{i=1}^N C_j^\theta \right]$ is convex, finite, and continuous on $[1, \alpha]$, since $\varphi(1) = \varphi(\alpha) = 1$. 
Furthermore, by standard arguments, it can be shown that both $\varphi'(\theta)$ and $\varphi''(\theta)$ exist on the open interval $(1, \alpha)$ and, in particular,
$$\varphi''(\theta) = E\left[ \sum_{i=1}^N C_i^\theta (\log C_i)^2   \right].$$
Clearly, $\varphi''(\theta) > 0$ provided that $P( C_i \in \{0,1\},1 \leq  i \leq N) < 1$. To see that this is indeed the case, note that $E\left[ \sum_{i=1}^N C_i \right] = 1$ implies that $P(C_i \equiv 0, 1 \leq i \leq N) < 1$, which combined with the nonarithmetic assumption yields $P( C_i \in \{0,1\},1 \leq  i \leq N) < 1$.  Hence, there exists $1 < \theta_1 < \theta_2 < \alpha$ such that $\varphi'(\theta_1) < 0$ and $\varphi'(\theta_2) > 0$, 
implying by the monotonicity of $\varphi'(\cdot)$ and monotone convergence that
\begin{equation} \label{eq:Derivative_alpha}
0 < \varphi'(\alpha-) = E\left[ \sum_{i=1}^N C_i^\alpha \log C_i \right]  \leq E\left[ \sum_{i=1}^N C_i^{\alpha} \log^+ C_i \right] < \infty \qquad \text{and}  
\end{equation}
\begin{equation*} 
 \varphi'(1+) = E\left[ \sum_{i=1}^N C_i \log C_i \right] < 0 .
\end{equation*}
The last expression and the observation $E\left[ \left( \sum_{i=1}^N C_i \right) \log^+ \left( \sum_{i=1}^N C_i \right) \right] < \infty$ (implied by $E\left[ \left( \sum_{i=1}^N C_i \right)^\alpha \right] < \infty$) yields, as argued at the beginning of this section, that recursion \eqref{eq:LinearHomogeneous} has a solution with finite positive mean, see Theorem 1.1(d) and equation (1.9) in \cite{Alsm_Kuhl_07} (see also Theorem 2 in \cite{Liu_00}). 

Next, in order to apply Theorem \ref{T.NewGoldie}, we use \eqref{eq:Derivative_alpha} and $E[ R^\beta] < \infty$ for any $0 < \beta < \alpha$; the latter follows from Theorem 3.1 in \cite{Alsm_Kuhl_07} and the strict convexity of $\varphi(\cdot)$ argued above (see also, Proposition~4 in \cite{Iksanov_04} and Theorem~2.1 in \cite{Liu_00}). The rest of the proof, except for the $H>0$ part, proceeds exactly as that of Theorem~\ref{T.LinearRecursion} by setting $Q \equiv 0$. 

Regarding the $H>0$ statement, the assumption $P(\tilde{N} \geq 2) >0$ implies that there exist $1 \leq n \leq \infty$ and $1\le i_1< i_2 < n+1$ such that $P(N = n, C_{i_1}>0,C_{i_2}>0)>0$, which further yields, for some $\delta>0$,
\begin{equation}
\label{eq:N2}
P(N\ge i_2, C_{i_1}>\delta,C_{i_2}>\delta)>0.
\end{equation}
Next, by using the inequality $\left(  x_1 + x_2 \right)^\alpha \geq x_1^\alpha + x_2^\alpha$ for $x_1, x_2 \geq 0$ and $\alpha > 1$, 
the second expressions for $H$ in the theorem can be bounded from below by 
\begin{equation}
\label{eq:Hlb1}
H\ge \frac{E\left[ 1(N\ge i_2) \left(\left(C_{i_1} R_{i_1} + C_{i_2} R_{i_2}\right)^\alpha - (C_{i_1} R_{i_1})^\alpha - (C_{i_2} R_{i_2})^\alpha\right) \right]}{\alpha E\left[ \sum_{i=1}^N C_i^\alpha \log C_i  \right] }.
\end{equation}
To further bound the numerator in \eqref{eq:Hlb1} we define the function
$$
f(x)=(1+x)^\alpha-1-x^\alpha- c x^{\alpha-\epsilon},
$$
where $0<\epsilon<\alpha-1$, $0<c<2^\gamma-1$ and $\gamma=\alpha-1-\epsilon$. It can be shown that $f(x) \geq 0$ for $x \in [0,1]$, since $f(0)=0$ and $f'(x)\ge \alpha x^\gamma ((1+1/x)^\gamma-1-c)\ge 0$ on $[0,1]$.
Hence, by using the inequality $f(x) \geq 0$, we derive for $x_1\ge 0, x_2\ge 0$, $\max\{x_1,x_2\}>0$
and $x={\min\{x_1,x_2\}}/{\max\{x_1,x_2\}}$
\begin{align*}
(x_1+x_2)^\alpha-x_1^\alpha-x_2^\alpha &= (\max\{x_1,x_2\})^\alpha \left((1+x)^\alpha    - 1 -  x^\alpha \right)
\\
&\geq c (\max\{x_1,x_2\})^\alpha x^{\alpha-\epsilon}\ge c (\min\{x_1,x_2\})^{\alpha};
\end{align*}
the inequality clearly holds even if $\max\{x_1,x_2\}=0$ since both of its sides are zero.
Thus, by applying this last inequality to \eqref{eq:Hlb1} and using \eqref{eq:N2}, we obtain 
\begin{align*}
H &\geq \frac{c E\left[ 1(N\ge i_2) \left(\min \left\{C_{i_1} R_{i_1} , C_{i_2} R_{i_2}\right\}\right)^\alpha \right]}{\alpha E\left[ \sum_{i=1}^N C_i^\alpha \log C_i  \right] } \\
&\geq  \frac{c \delta^\alpha P(N\ge i_2, C_{i_1}>\delta, C_{i_2}>\delta) E[\left(\min \{R_{i_1},R_{i_2}\}\right)^\alpha] }{\alpha E\left[ \sum_{i=1}^N C_i^\alpha \log C_i  \right] }>0.
\end{align*}
This completes the proof.
\end{proof}

{\sc Remarks:} (i) Note that the assumptions of Theorem \ref{T.LinearHomog} differ slightly from those of Theorem \ref{T.LinearRecursion} in the condition $0 < E\left[ \sum_{i=1}^N C_i^\alpha \log C_i \right] <~\infty$, which is implied by $E\left[ \sum_{i=1}^N C_i^\alpha \log^+ C_i \right] < \infty$, the strict convexity of $\varphi(\theta) = E\left[ \sum_{i=1}^N C_i^\theta \right]$ and the hypothesis that $\varphi(1) = \varphi(\alpha) = 1$, as argued in the preceding proof. 
(ii) The assumption $P(\tilde{N} \geq 2) >0$ is the minimal one to ensure the existence of a nontrivial solution, 
see conditions (H0) in \cite{Liu_98} and (C4) in  \cite{Alsm_Kuhl_07}.  Otherwise, when $P(\tilde{N} \le 1) =1$, $W_n$ is a simple
multiplicative random walk with no branching; clearly, in this case our expression for $H$ reduces to zero. 
Also, if $P(\sum_{i=1}^N C_i=1)=1$, $R$ can only be a constant; see the remark above 
Theorem~2.1 in \cite{Liu_00}. 
However, this last case is excluded from the theorem since $P(\sum_{i=1}^N C_i=1)=1$ implies $C_i\le 1$ a.s., 
which, in conjunction with $\varphi(\alpha) = 1, \alpha>1$, yields $P( C_i \in \{0,1\},1 \leq  i \leq N) = 1$, 
but this cannot happen due to the nonarithmetic assumption. 
(iii) Note also that condition (C3) in  \cite{Alsm_Kuhl_07} (equivalent to $P( C_i \in \{0,1\},1 \leq  i \leq N) < 1$ in our notation) 
is implied by the nonarithmetic assumption of our theorem. 
Interestingly enough, if this last condition fails, Lemma~1.1 of \cite{Liu_98} shows that equation \eqref{eq:LinearHomogeneous} has no nontrivial solutions.
(iv) As stated earlier, a version of this theorem was proved in Theorem 2.2 of \cite{Liu_00}, by transforming recursion \eqref{eq:LinearHomogeneous} into a first order difference (autoregressive/perpetuity) equation on a different probability space, see Lemma 4.1 in \cite{Liu_00}. However,  it appears that the method from \cite{Liu_00} does not extend to the nonhomogeneous and non-linear problems that we cover here, since the proof of Lemma~4.1 in \cite{Liu_00} critically depends on having both $E[R] = 1$ and $E\left[ \sum_{i=1}^N C_i \right] = 1$.

Similarly as in Corollary \ref{C.explicit}, the constant $H$ can be computed explicitly for integer $\alpha \geq 2$.

\begin{cor} 
\label{C.explicitHom}
For integer $\alpha \geq 2$, and under the same assumptions of Theorem \ref{T.LinearHomog}, the constant $H$ can be explicitly computed as a function of $E[R^k], E[C^k]$, $1 \leq k \leq \alpha-1$. In particular, for $\alpha = 2$,
$$H = \frac{ E\left[ \sum_{i=1}^N \sum_{j=i+1}^N C_i C_j  \right] }{E\left[ \sum_{i=1}^N C_i^2 \log C_i  \right] }.$$
\end{cor}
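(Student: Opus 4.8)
The plan is to follow the route of the proof of Corollary~\ref{C.explicit}, starting from the second, computation-friendly, expression for $H$ furnished by Theorem~\ref{T.LinearHomog},
$$H = \frac{E\left[ \left( \sum_{i=1}^N C_i R_i \right)^\alpha - \sum_{i=1}^N (C_i R_i )^\alpha \right]}{\alpha E\left[ \sum_{i=1}^N C_i^\alpha \log C_i  \right] },$$
whose numerator is already known to be finite from the argument used in the proof of that theorem (Lemmas~\ref{L.Max_Approx} and \ref{L.ExtraQ} with $Q\equiv 0$). Since $\alpha$ is now a positive integer, the idea is to expand $\left( \sum_{i=1}^N C_i R_i \right)^\alpha$ by the multinomial theorem: for a convergent series of nonnegative terms, $\left( \sum_i a_i \right)^\alpha = \sum_{\mathbf{k}} \frac{\alpha!}{\prod_i k_i!} \prod_i a_i^{k_i}$, where $\mathbf{k} = (k_i)$ runs over multi-indices of finite support with $\sum_i k_i = \alpha$. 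Subtracting $\sum_{i=1}^N (C_i R_i)^\alpha$ removes exactly the ``pure'' terms in which a single index carries the full weight $\alpha$, leaving a sum in which every $k_i$ satisfies $k_i \le \alpha - 1$.

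Next I would take expectations term by term. Since all terms are nonnegative, Tonelli's theorem justifies interchanging $E$ with the multinomial sum, and by the symmetry among the indices there are only finitely many distinct ``shapes'' of terms (one for each partition of $\alpha$ into parts of size at most $\alpha-1$). For a fixed shape, conditioning on $(N, C_1, C_2, \dots)$ and using that the $\{R_i\}$ are i.i.d.\ copies of $R$ independent of that vector, the term factorizes as a product of moments $E[R^{k_i}]$ (over the indices $i$ with $k_i \ge 1$) times a symmetric moment $E\left[ \prod_{i: k_i \ge 1} C_i^{k_i} \right]$. The $R$-moments are finite because $k_i \le \alpha - 1 < \alpha$ and $E[R^\beta] < \infty$ for all $0 < \beta < \alpha$, as established in the proof of Theorem~\ref{T.LinearHomog}; the mixed $C$-moments are finite because $\prod_{i: k_i \ge 1} C_i^{k_i} \le \left( \sum_{i=1}^N C_i \right)^\alpha$ and $E\left[ \left( \sum_{i=1}^N C_i \right)^\alpha \right] < \infty$ by hypothesis. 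This exhibits $H$ as an explicit finite algebraic expression in the moments $E[R^k]$ and in symmetric moments of $(N, C_1, C_2, \dots)$ for $1 \le k \le \alpha - 1$, which is the first assertion of the corollary.

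For $\alpha = 2$ the only surviving shape corresponds to $\mathbf{k} = \mathbf{e}_i + \mathbf{e}_j$ with $i < j$, carrying the coefficient $2$, so that $\left( \sum_{i=1}^N C_i R_i \right)^2 - \sum_{i=1}^N (C_i R_i)^2 = 2 \sum_{1 \le i < j \le N} C_i C_j R_i R_j$. Taking expectations, using independence, and using $E[R_i R_j] = (E[R])^2 = 1$ for the mean-one solution constructed in Section~\ref{SS.Homogeneous} --- which is no loss of generality, since the solutions of \eqref{eq:LinearHomogeneous} are scale invariant --- the numerator becomes $2\, E\left[ \sum_{i=1}^N \sum_{j=i+1}^N C_i C_j \right]$; dividing by $\alpha E\left[ \sum_{i=1}^N C_i^\alpha \log C_i \right] = 2\, E\left[ \sum_{i=1}^N C_i^2 \log C_i \right]$ yields the displayed formula. (Finiteness of this numerator is immediate from $2 \sum_{i<j} C_i C_j \le \left( \sum_i C_i \right)^2$.)

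I do not expect a genuine obstacle here: the substance is confined to the bookkeeping of the multinomial expansion and to the two finiteness checks above. The point that has to be made carefully --- and the reason the statement is meaningful even though $E[R^\alpha] = \infty$ in the nontrivial case --- is that the subtraction of $\sum_{i=1}^N (C_i R_i)^\alpha$ cancels every term involving an $\alpha$-th power of a single $R_i$, so that only moments of $R$ of order at most $\alpha - 1$ survive in the final expression.
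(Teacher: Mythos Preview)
Your proposal is correct and takes essentially the same approach as the paper: the paper's proof is the single sentence ``The proof follows directly from multinomial expansions of the second expression for $H$ in Theorem~\ref{T.LinearHomog},'' and what you have written is precisely the detailed execution of that multinomial expansion, including the observation that the constructed solution has $E[R]=1$ so that the $(E[R])^2$ factor disappears in the $\alpha=2$ formula.
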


\begin{proof}
The proof follows directly from multinomial expansions of the second expression for $H$ in Theorem~\ref{T.LinearHomog}. 
\end{proof}

We also want to point out that for non-integer general $\alpha > 1$ we can use Lemma \ref{L.Alpha_Moments} to obtain the following bound for $H$,
$$H \leq \frac{ \left( E\left[ R^{p-1} \right] \right)^{\alpha/(p-1)} E\left[ \left( \sum_{i=1}^N C_i  \right)^\alpha\right]}{\alpha E\left[ \sum_{i=1}^N C_i^\alpha \log C_i  \right] },$$
where $p = \lceil \alpha \rceil$.

\section{The maximum recursion: $R = \left( \bigvee_{i=1}^N C_i R_i \right) \vee Q$} \label{S.MaxRec}

In order to show the general applicability of the implicit renewal theorem,  we study in this section the following non-linear recursion:
\begin{equation} \label{eq:Maximum}
R \stackrel{\mathcal{D}}{=} \left(\bigvee_{i=1}^N C_i R_i \right) \vee Q,
\end{equation}
where $(Q, N, C_1, C_2, \dots)$ is a nonnegative random vector with $N \in \mathbb{N} \cup \{\infty\}$, \linebreak $P(Q > 0) > 0$ and $\{R_i\}_{i\in \mathbb{N}}$ is a sequence of iid random variables independent of $(Q, N, C_1, C_2, \dots)$ having the same distribution as $R$. Note that in the case of page ranking applications, where the $\{R_i\}$ represent the ranks of the neighboring pages, the potential ranking algorithm defined by the preceding recursion, determines the rank of a page as a weighted version of the most highly ranked neighboring page. In other words, the highest ranked reference has the dominant impact. Similarly to the homogeneous linear case, this recursion was previously studied in \cite{Alsm_Rosl_08} under the assumption that $Q \equiv 0$, $N = \infty$, and the $\{C_i\}$ are real valued deterministic constants. The more closely related case of $Q \equiv 0$ and $\{C_i \} \geq 0$ being random was studied earlier in \cite{Jag_Ros_04}. Furthermore, these max-type stochastic recursions appear in a wide variety of applications, ranging from the average case analysis of algorithms to statistical physics; see \cite{Aldo_Band_05} for a recent survey.  

Using standard arguments, we start by constructing a solution to \eqref{eq:Maximum} on a tree and then we show that this solution is finite a.s. and unique under iterations and some moment conditions, similar to what was done for the nonhomogeneous linear recursion in Section \ref{S.LinearRec}. Our main result of this section is stated in Theorem \ref{T.MaximumRecursion}. 

Following the same notation as in Section \ref{S.LinearRec}, define the process
\begin{equation} \label{eq:V_k}
V_n = \bigvee_{{\bf i} \in A_n} Q_{{\bf i}} \boldC_{{\bf i}}, \qquad n \geq 0,
\end{equation}
on the weighted branching tree $\mathcal{T}_{Q, C}$, as constructed in Section \ref{S.ModelDescription}.
Recall that the convention is that  $(Q, N, C_1, C_2, \dots) = (Q_\emptyset, N_\emptyset, C_{(\emptyset, 1)}, C_{(\emptyset, 2)}, \dots)$ denotes the random vector corresponding to the root node. 

With a possible abuse of notation relative to Section \ref{S.LinearRec}, define the process $\{R^{(n)}\}_{n \geq 0}$ according to
$$R^{(n)} = \bigvee _{k=0}^n V_k, \qquad n \geq 0.$$
Just as with the linear recursion from Section \ref{S.LinearRec}, it is not hard to see that $R^{(n)}$ satisfies the recursion
\begin{equation} \label{eq:MaxRecSamplePath}
R^{(n)} = \left( \bigvee_{j=1}^{N_\emptyset} C_{(\emptyset, j)} R_j^{(n-1)} \right) \vee Q_\emptyset = \left( \bigvee_{j=1}^{N} C_{j} R_j^{(n-1)} \right) \vee Q ,
\end{equation}
where $\{R_j^{(n-1)} \}$ are independent copies of $R^{(n-1)}$ corresponding to the tree starting with individual $j$ in the first generation and ending on the $n$th generation. One can also verify that
$$V_n = \bigvee_{k=1}^{N_{\emptyset}} C_{(\emptyset,k)} \bigvee_{(k,\dots, i_n) \in A_n} 
Q_{(k,\dots, i_n)} \prod_{j=2}^n C_{(k,\dots,i_j)}  \stackrel{\mathcal{D}}{=} \bigvee_{k=1}^N C_k V_{(n-1),k},$$
where $\{V_{(n-1),k}\}$ is a sequence of iid random variables independent of $(N, C_1, C_2, \dots)$ and having the same distribution as $V_{n-1}$. 

We now define the random variable $R$ according to
\begin{equation}
\label{eq:maxR}
R \triangleq \lim_{n\to \infty} R^{(n)} = \bigvee_{k=0}^\infty V_k.
\end{equation}
Note that $R^{(n)}$ is monotone increasing sample-pathwise, so $R$ is well defined. Also, by monotonicity of $R^{(n)}$, \eqref{eq:MaxRecSamplePath} and monotone convergence, we obtain that $R$ solves
$$R = \left( \bigvee_{j=1}^{N_{\emptyset}} C_{(\emptyset, j)} R_j^{(\infty)} \right) \vee Q_{\emptyset} = \left( \bigvee_{j=1}^{N} C_{j} R_j^{(\infty)} \right) \vee Q,$$
where $\{R_j^{(\infty)} \}_{j \in \mathbb{N}}$ are iid copies of $R$, independent of $(Q, N, C_1, C_2, \dots)$. 
Clearly this implies that $R$, as defined by  \eqref{eq:maxR}, is a solution in distribution to \eqref{eq:Maximum}. However, this solution might be $\infty$. 
Now, we establish the finiteness of the moments of $R$, and in particular that $R < \infty$ a.s.,  in the following lemma; its proof uses standard contraction arguments but is included for completeness.

\begin{lem} \label{L.Moments_R_Max}
Assume that $\rho_\beta = E\left[ \sum_{i=1}^N C_i^\beta \right]<1$ and 
$E[Q^\beta] < \infty$ for some $\beta>0$. Then, $E[R^\gamma] < \infty$ for all $0  < \gamma \leq \beta$, and in particular, $R < \infty$ a.s. 
Moreover, if $\beta  \geq 1$, $R^{(n)} \stackrel{L_\beta}{\to} R$, where $L_\beta$ stands for convergence in $(E|\cdot|^\beta)^{1/\beta}$ norm. 
\end{lem}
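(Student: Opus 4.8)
The plan is to mimic the proof of Lemma~\ref{L.Moments_R}, replacing sums with maxima and exploiting the elementary bound $\left(\bigvee_{i} x_i\right)^\beta \le \sum_i x_i^\beta$ for $x_i\ge 0$ and any $\beta>0$. First I would observe that $V_n = \bigvee_{{\bf i}\in A_n} Q_{\bf i}\boldC_{\bf i}$ satisfies
$$E[V_n^\beta] = E\left[\bigvee_{{\bf i}\in A_n} (Q_{\bf i}\boldC_{\bf i})^\beta\right] \le E\left[\sum_{{\bf i}\in A_n} (Q_{\bf i}\boldC_{\bf i})^\beta\right] = E[W_n^\beta]\Big|_{\text{same }Q},$$
and the right-hand side is exactly the quantity bounded in Lemma~\ref{L.MomentSmaller_1} (for $0<\beta\le1$) and, after conditioning on the tree and using independence of $R$, equals $E[Q^\beta]\,E\big[\sum_{{\bf i}\in A_n}\boldC_{\bf i}^\beta\big] = E[Q^\beta]\rho_\beta^n$ by the iteration computation in \eqref{eq:PiMoments}. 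So without any extra hypothesis on $\left(\sum_i C_i\right)^\beta$ we get $E[V_n^\beta] \le E[Q^\beta]\rho_\beta^n$ for every $\beta>0$, which is cleaner than in the linear case precisely because the max kills the cross terms.

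Next I would bound $R = \bigvee_{k=0}^\infty V_k$. For $0<\beta\le 1$ I would use $R^\beta = \big(\bigvee_{k\ge0} V_k\big)^\beta \le \sum_{k=0}^\infty V_k^\beta$, whence by monotone convergence $E[R^\beta] \le \sum_{k=0}^\infty E[V_k^\beta] \le E[Q^\beta]\sum_{k=0}^\infty \rho_\beta^k = E[Q^\beta]/(1-\rho_\beta) < \infty$. For $\beta\ge1$ I would instead use Minkowski on the finite maxima $R^{(n)} = \bigvee_{k=0}^n V_k \le \big(\sum_{k=0}^n V_k^\beta\big)^{1/\beta}$, i.e. $\big(E[(R^{(n)})^\beta]\big)^{1/\beta} \le \big(\sum_{k=0}^n E[V_k^\beta]\big)^{1/\beta}$, and let $n\to\infty$ by monotone convergence to get $E[R^\beta] \le \sum_{k=0}^\infty E[V_k^\beta] < \infty$ again. (Either way the single bound $R^\beta\le\sum_k V_k^\beta$ with monotone convergence suffices; I'd present the $\beta\ge1$ case the same way, since $\sum_k V_k^\beta$ is a legitimate a.s. bound regardless of $\beta$.) Finiteness of $E[R^\beta]$ forces $R<\infty$ a.s. Then for $0<\gamma\le\beta$, Jensen/Lyapunov gives $E[R^\gamma] = E[(R^\beta)^{\gamma/\beta}] \le (E[R^\beta])^{\gamma/\beta} < \infty$.

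For the $L_\beta$ convergence when $\beta\ge1$, I would note $0 \le R - R^{(n)} = \big(\bigvee_{k=0}^\infty V_k\big) - \big(\bigvee_{k=0}^n V_k\big) \le \bigvee_{k=n+1}^\infty V_k \le \big(\sum_{k=n+1}^\infty V_k^\beta\big)^{1/\beta}$, so $E[|R-R^{(n)}|^\beta] \le \sum_{k=n+1}^\infty E[V_k^\beta] \le E[Q^\beta]\,\rho_\beta^{n+1}/(1-\rho_\beta) \to 0$ as $n\to\infty$.

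I do not expect a genuine obstacle here; the only point requiring a little care is establishing $E[V_n^\beta]\le E[Q^\beta]\rho_\beta^n$ cleanly — one must condition on $\mathcal{F}_{n-1}$, use that $(N_{{\bf i}|n-1}, C_{({\bf i}|n-1,\cdot)})$ is independent of $\mathcal{F}_{n-1}$ and that the $Q_{\bf i}$ at level $n$ are independent of the weights $\boldC_{{\bf i}|n-1}$, and then iterate exactly as in \eqref{eq:PiMoments}. This is routine given the tree construction, and in fact it already appears implicitly via Lemma~\ref{L.MomentSmaller_1}. The cleanness relative to Lemma~\ref{L.Moments_R} is the payoff of the max structure: no condition like $E\big[(\sum_i C_i)^\beta\big]<\infty$ is needed even for $\beta\ge1$.
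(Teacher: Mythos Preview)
Your proposal is correct and matches the paper's proof essentially step for step: bound $E[V_n^\beta]\le E[Q^\beta]\rho_\beta^n$ via $\max\le\sum$ and the iteration in \eqref{eq:PiMoments}, then use $R^\beta\le\sum_{k\ge 0}V_k^\beta$ and monotone convergence to get $E[R^\beta]\le E[Q^\beta]/(1-\rho_\beta)$, and finally bound $|R-R^{(n)}|^\beta$ by the tail sum $\sum_{k>n}V_k^\beta$. One cosmetic remark: the inequality you invoke for $\beta\ge 1$ is not Minkowski but simply the pointwise bound $(\max_k V_k)^\beta\le\sum_k V_k^\beta$, which you in fact state correctly and which is exactly what the paper uses.
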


\begin{proof}
By following the same steps leading to \eqref{eq:PiMoments}, we obtain that for any $k\ge 0$, 
\begin{equation}\label{eq:Vmoment}
E[V_k^\beta] = E\left[ \bigvee_{{\bf i} \in A_k} Q_{\bf i}^\beta \boldC_{\bf i}^\beta \right] \leq E\left[ \sum_{{\bf i} \in A_k} Q_{\bf i}^\beta \boldC_{\bf i}^\beta \right] = E[Q^\beta] \rho_\beta^k.\end{equation}
Hence,
$$E[R^\beta] = E\left[ \bigvee_{k=0}^\infty V_k^\beta \right] \leq E\left[ \sum_{k=0}^\infty V_k^\beta \right] \leq \frac{E[Q^\beta]}{1-\rho_\beta} < \infty,$$
implying that $E[R^\gamma] < \infty$ for all $0 < \gamma \leq \beta$. 

That $R^{(n)} \stackrel{L_\beta}{\to} R$ whenever $\beta\geq 1$ follows from noting that 
$E[|R^{(n)} - R|^\beta] \le E\left[ \left( \bigvee_{k = n+1}^\infty V_k \right)^\beta \right] \leq E\left[ \sum_{k = n+1}^\infty V_k^\beta  \right]$ 
and applying the preceding geometric bound for $E[V_k^\beta]$. 
\end{proof}

Just as with the linear recursion from Section \ref{S.LinearRec}, we can define the process $\{R_n^*\}$ as
\begin{equation*}
R_n^* \triangleq R^{(n-1)} \vee V_n(R_0^*), \qquad n \geq 1,
\end{equation*}
where
\begin{equation} \label{eq:MaxLastWeights}
V_n(R_0^*) = \bigvee_{{\bf i} \in A_n} R^*_{0,{\bf i}} \boldC_{{\bf i}},
\end{equation}
and $\{ R_{0,{\bf i}}^*\}_{{\bf i} \in U}$ are iid copies of an initial value $R_0^*$, independent of the entire weighted tree $\mathcal{T}_{Q,C}$. It follows from \eqref{eq:MaxRecSamplePath} and \eqref{eq:MaxLastWeights} that
\begin{equation*} 
R_{n+1}^* =  \bigvee_{j=1}^{N} C_j \left( R_{j}^{(n-1)}  \vee \bigvee_{{\bf i} \in A_{n,j}} R_{0,{\bf i}}^* \prod_{k=2}^n C_{(j,\dots,i_k)} \right) \vee Q = \bigvee_{j=1}^N C_j R_{n,j}^* \vee Q,
\end{equation*}
where $\{ R_{j}^{(n-1)} \}$ are independent copies of $R^{(n-1)}$ corresponding to the tree starting with individual $j$ in the first generation and ending on the $n$th generation, and $A_{n,j}$ is the set of all nodes in the $(n+1)$th generation that are descendants of individual $j$ in the first generation. Moreover, $\{R_{n,j}^*\}$ are iid copies of $R_n^*$, and thus, $R_n^*$ is equal in distribution to the process obtained by iterating \eqref{eq:Maximum} with an initial condition $R_0^*$. This process can be shown to converge in distribution to $R$ for any initial condition $R_0^*$ satisfying the following moment condition.

\begin{lem} \label{L.ConvergenceMax}
For any $R_0^* \geq 0$, if $E[Q^\beta], E[(R_0^*)^\beta] < \infty$ and $\rho_\beta < 1$ for some $\beta >0$, then
$$R_n^* \Rightarrow R,$$
with $E[R^\beta] < \infty$. Furthermore, under these assumptions, the distribution of $R$ is the unique solution with finite $\beta$-moment to recursion \eqref{eq:Maximum}. 
\end{lem}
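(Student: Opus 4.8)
The plan is to mirror the contraction argument used in the proof of Lemma~\ref{L.Convergence}, replacing sums by maxima throughout. The key observation is that $R_n^* = R^{(n-1)} \vee V_n(R_0^*)$ with $R^{(n-1)} \to R$ almost surely, by the monotonicity built into the construction \eqref{eq:maxR}. Hence it suffices to prove $V_n(R_0^*) \Rightarrow 0$; once this is established, $(R^{(n-1)}, V_n(R_0^*))$ converges in probability to $(R,0)$ (an almost surely convergent sequence paired with a sequence converging in probability to a constant), and the continuous mapping theorem applied to $(x,y)\mapsto x\vee y$ gives $R_n^* = R^{(n-1)}\vee V_n(R_0^*) \Rightarrow R\vee 0 = R$. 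Equivalently, one may invoke Slutsky's theorem directly, exactly as in the proof of Lemma~\ref{L.Convergence}; see Theorem~25.4 in \cite{Billingsley_1995}.

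To show $V_n(R_0^*)\Rightarrow 0$, note that $V_n(R_0^*)$ as defined by \eqref{eq:MaxLastWeights} is obtained from $V_n$ in \eqref{eq:V_k} by substituting the iid copies $R_{0,{\bf i}}^*$ for the $Q_{\bf i}$. Consequently the geometric bound \eqref{eq:Vmoment} applies verbatim with $Q^\beta$ replaced by $(R_0^*)^\beta$, yielding $E[V_n(R_0^*)^\beta] \le E[(R_0^*)^\beta]\,\rho_\beta^{\,n}$. Then, for any $\epsilon>0$, Markov's inequality gives $P(V_n(R_0^*)>\epsilon) \le \epsilon^{-\beta} E[(R_0^*)^\beta]\,\rho_\beta^{\,n} \to 0$ since $\rho_\beta<1$, so $V_n(R_0^*)\to 0$ in probability and hence in distribution. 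This proves $R_n^* \Rightarrow R$, and $E[R^\beta]<\infty$ is precisely the content of Lemma~\ref{L.Moments_R_Max}.

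For the uniqueness claim, suppose $\tilde{R}\ge 0$ is any solution to \eqref{eq:Maximum} with $E[\tilde{R}^\beta]<\infty$. Running the iteration with initial condition $R_0^*=\tilde{R}$, the identification established just before the statement of the lemma shows that $R_n^*$ is equal in distribution to the $n$-fold iterate of \eqref{eq:Maximum} started from $\tilde{R}$; since $\tilde{R}$ is a fixed point of that recursion, $R_n^* \stackrel{\mathcal{D}}{=} \tilde{R}$ for every $n$. On the other hand $R_n^*\Rightarrow R$, so $\tilde{R}\stackrel{\mathcal{D}}{=}R$, which gives uniqueness.

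I do not anticipate a genuine obstacle; the proof is a routine coupling/contraction argument parallel to that of Lemma~\ref{L.Convergence}. The only point deserving a line of care is the passage from the separate convergences of $R^{(n-1)}$ and $V_n(R_0^*)$ to the convergence of their maximum, which is handled by joint convergence in probability plus the continuity of the $\vee$ operation (or, equivalently, Slutsky's theorem since one of the two limits is the constant $0$).
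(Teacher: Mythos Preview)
Your proposal is correct and follows essentially the same approach as the paper: reduce to showing $V_n(R_0^*)\Rightarrow 0$ via the geometric moment bound \eqref{eq:Vmoment} and Markov's inequality, then invoke Slutsky's theorem and Lemma~\ref{L.Moments_R_Max}. Your write-up is slightly more explicit about the continuous-mapping step for $\vee$ and the uniqueness argument, but the substance is identical.
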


\begin{proof}
The result will follow from Slutsky's Theorem (see Theorem 25.4, p. 332 in \cite{Billingsley_1995}) once we show that $V_n(R_0^*) \Rightarrow 0$. To this end, recall that $V_n(R_0^*)$ is the same as $V_n$ if we substitute the $Q_{{\bf i}}$ by the $R_{0,{\bf i}}^*$. Then, for every $\epsilon > 0$ we have that
\begin{align*}
P( V_n(R_0^*) > \epsilon) &\leq \epsilon^{-\beta} E[ V_n(R_0^*)^\beta] \\
&\leq \epsilon^{-\beta} \rho_\beta^n E[(R_0^*)^\beta] \qquad \text{(by \eqref{eq:Vmoment})} .
\end{align*}
Since by assumption the right-hand side converges to zero as $n \to \infty$, then $R_n^* \Rightarrow R$. Furthermore, $E[R^\beta] < \infty$ by Lemma \ref{L.Moments_R_Max}. Clearly, under the assumptions, the distribution of $R$ represents the unique solution to \eqref{eq:Maximum}, since any other possible solution with finite $\beta$-moment would have to converge to the same limit.  
\end{proof}

Now we are ready to state the main result of this section. 

\begin{thm} \label{T.MaximumRecursion}
Let $(Q, N, C_1, C_2, \dots)$ be a nonnegative random vector, with $N \in \mathbb{N} \cup \{\infty\}$, $P(Q > 0) > 0$
and $R$ be the solution to \eqref{eq:Maximum} given by  \eqref{eq:maxR}. 
Suppose that there exists $j \geq 1$ with $P(N\ge j,C_j>0)>0$ such that the measure $P(\log C_j\in du, C_j > 0, N\ge j)$ is nonarithmetic, and 
that for some $\alpha > 0$,  $E[Q^\alpha] < \infty$, $0 < E \left[ \sum_{i=1}^N C_i^\alpha \log C_i \right] < \infty$ and  $ E \left[ \sum_{i=1}^N C_i^\alpha \right] = 1$. In addition, assume
\begin{enumerate}
\item $E\left[ \left( \sum_{i=1}^N C_i \right)^\alpha \right] < \infty$, , if $\alpha > 1$; or,
\item $E\left[ \left( \sum_{i=1}^N C_i^{\alpha/(1+\epsilon)}\right)^{1+\epsilon} \right] < \infty$ for some $0 < \epsilon< 1$, if $0 < \alpha \leq 1$.
\end{enumerate}
Then,
$$P(R > t) \sim H t^{-\alpha}, \qquad t \to \infty,$$
where $0 \leq H < \infty$ is given by
\begin{align*}
H &= \frac{1}{E\left[ \sum_{i=1}^N C_i^\alpha \log C_i  \right] } \int_{0}^\infty v^{\alpha-1} \left( P(R > v) - E\left[ \sum_{i=1}^{N} \Indicator(C_{i} R > v ) \right]    \right) dv \\
&= \frac{E\left[ \left( \bigvee_{i=1}^N C_i R_i \right)^\alpha \vee Q^\alpha - \sum_{i=1}^N (C_i R_i )^\alpha \right]}{\alpha E\left[ \sum_{i=1}^N C_i^\alpha \log C_i  \right] }.
\end{align*}
\end{thm}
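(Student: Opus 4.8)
The plan is to follow the same strategy used in the proof of Theorem~\ref{T.LinearRecursion}, namely to verify the hypotheses of the general implicit renewal theorem, Theorem~\ref{T.NewGoldie}, for the random variable $R$ defined by \eqref{eq:maxR}. First I would observe that Lemma~\ref{L.Moments_R_Max} (applied with any $\beta$ slightly below $\alpha$, which is admissible since $\rho_\beta<1$ for such $\beta$ by convexity of $\theta\mapsto\rho_\theta$ on $[0,\alpha]$ together with $\rho_\alpha=1$ and $E[Q^\alpha]<\infty$) gives $E[R^\beta]<\infty$ for all $0<\beta<\alpha$, and in particular $R<\infty$ a.s. Next, exactly as in the linear case, I would check that $E\left[\sum_{i=1}^N C_i^\gamma\right]<\infty$ for some $0\le\gamma<\alpha$: when $\alpha>1$ use Jensen together with $E\left[\left(\sum_{i=1}^N C_i\right)^\alpha\right]<\infty$ to handle any $1\le\gamma<\alpha$, and when $0<\alpha\le1$ take $\gamma=\alpha(1+\epsilon/2)/(1+\epsilon)$ and use the assumed finiteness of $E\left[\left(\sum_{i=1}^N C_i^{\alpha/(1+\epsilon)}\right)^{1+\epsilon}\right]$. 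The nonarithmetic and moment assumptions on $(N,C_1,C_2,\dots)$ are shared verbatim with Theorem~\ref{T.NewGoldie}, so the only nontrivial thing left is Goldie's integrability condition \eqref{eq:Goldie_condition}.

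To verify \eqref{eq:Goldie_condition}, set $R^*=\left(\bigvee_{i=1}^N C_iR_i\right)\vee Q$ so that $R\stackrel{\mathcal{D}}{=}R^*$, and split
$$\left|P(R>t)-E\left[\sum_{i=1}^N\Indicator(C_iR_i>t)\right]\right| \le \left|P(R>t)-P\left(\max_{1\le i\le N}C_iR_i>t\right)\right| + \left(E\left[\sum_{i=1}^N\Indicator(C_iR_i>t)\right]-P\left(\max_{1\le i\le N}C_iR_i>t\right)\right).$$
The key simplification relative to the linear case is that here $R^*=\left(\bigvee_{i=1}^N C_iR_i\right)\vee Q\ge\max_{1\le i\le N}C_iR_i$ is itself a maximum, so $P(R>t)-P(\max_i C_iR_i>t)=P(\max_i C_iR_i\le t<Q\vee\max_i C_iR_i)\le P(Q>t)$ and hence $\int_0^\infty\left(P(R>t)-P(\max_i C_iR_i>t)\right)t^{\alpha-1}\,dt\le\alpha^{-1}E[Q^\alpha]<\infty$ directly, with no need for an analogue of Lemma~\ref{L.ExtraQ}. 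The second term is controlled by Lemma~\ref{L.Max_Approx}: I must check its hypotheses, i.e. $\sum_{i=1}^N(C_iR_i)^\alpha<\infty$ a.s.\ (which follows from $\sum_{i=1}^N(C_iR_i)^\alpha \le \left(\sum_{i=1}^N C_i^{\alpha/(1+\epsilon)}\right)^{1+\epsilon}\cdot\sup_i R_i^\alpha$-type bounds, or more simply from finiteness of $R^*$ and the moment conditions as in the linear proof), $E[R^\beta]<\infty$ for $0<\beta<\alpha$ (already shown), and $E\left[\left(\sum_{i=1}^N C_i^{\alpha/(1+\epsilon)}\right)^{1+\epsilon}\right]<\infty$ for some $0<\epsilon<1$ — which in the regime $\alpha>1$ follows from $E\left[\left(\sum_{i=1}^N C_i\right)^\alpha\right]<\infty$ by choosing $\epsilon$ with $\alpha/(1+\epsilon)\ge1$ and applying \eqref{eq:concaveSum}, and in the regime $0<\alpha\le1$ is assumed outright. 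Lemma~\ref{L.Max_Approx} then gives $\int_0^\infty\left(E\left[\sum_i\Indicator(C_iR_i>t)\right]-P(\max_i C_iR_i>t)\right)t^{\alpha-1}\,dt=\alpha^{-1}E\left[\sum_{i=1}^N(C_iR_i)^\alpha-\left(\max_{1\le i\le N}C_iR_i\right)^\alpha\right]<\infty$. Adding the two pieces establishes \eqref{eq:Goldie_condition}, so Theorem~\ref{T.NewGoldie} applies and yields $P(R>t)\sim Ht^{-\alpha}$ with the first (integral) expression for $H$.

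Finally, to obtain the second, more explicit expression for $H$, I would compute $\int_0^\infty v^{\alpha-1}\left(P(R>v)-E\left[\sum_{i=1}^N\Indicator(C_iR>v)\right]\right)dv$ using $R\stackrel{\mathcal{D}}{=}R^*$ and Fubini, as in the linear proof: pull the integral inside the expectation (justified by the integrability established above, now using that $\Indicator(R^*>v)-\sum_i\Indicator(C_iR_i>v)$ is dominated in absolute value by the sum of the two nonnegative quantities just bounded), evaluate $\int_0^{R^*}v^{\alpha-1}\,dv-\sum_{i=1}^N\int_0^{C_iR_i}v^{\alpha-1}\,dv=\alpha^{-1}\left((R^*)^\alpha-\sum_{i=1}^N(C_iR_i)^\alpha\right)$, and recall $R^*=\left(\bigvee_{i=1}^N C_iR_i\right)\vee Q$ so that $(R^*)^\alpha=\left(\bigvee_{i=1}^N C_iR_i\right)^\alpha\vee Q^\alpha$. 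Dividing by $E\left[\sum_{i=1}^N C_i^\alpha\log C_i\right]$ gives the stated formula. The main obstacle I anticipate is purely the bookkeeping in the $0<\alpha\le1$ regime when justifying the a.s.\ finiteness of $\sum_{i=1}^N(C_iR_i)^\alpha$ and the Fubini interchange — the $\alpha>1$ case is cleaner because one can split $(R^*)^\alpha\ge\sum_i(C_iR_i)^\alpha$ and handle the two resulting nonnegative pieces separately, whereas for $0<\alpha\le1$ one instead uses $(R^*)^\alpha\le Q^\alpha+\sum_i(C_iR_i)^\alpha$ via \eqref{eq:concaveSum}, so the same $E[Q^\alpha]<\infty$ term reappears and the argument closes. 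Apart from that, everything is a direct transcription of the linear-recursion proof with sums replaced by maxima where appropriate, and the absence of a cross-term in the maximum (in contrast to $(\sum_i C_iR_i+Q)^\alpha$) actually makes this proof shorter.
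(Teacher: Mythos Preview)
Your proposal is correct and follows essentially the same approach as the paper: verify the moment and $\gamma$-condition hypotheses of Theorem~\ref{T.NewGoldie} exactly as in the linear case, split the Goldie integrand into the ``$R$ vs.\ max'' piece and the ``max vs.\ sum-of-indicators'' piece, control the latter via Lemma~\ref{L.Max_Approx}, and bound the former by $\alpha^{-1}E[Q^\alpha]$. The only cosmetic difference is that you bound the integrand directly by $P(Q>t)$ before integrating, whereas the paper first applies Fubini to obtain $\alpha^{-1}E\bigl[(\bigvee_i C_iR_i)^\alpha\vee Q^\alpha-(\bigvee_i C_iR_i)^\alpha\bigr]$ and then uses $a\vee b-a\le b$; the resulting bound is identical, and the derivation of the second expression for $H$ is the same Fubini computation you outline.
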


\begin{proof}
By Lemma \ref{L.Moments_R_Max} we know that $E[R^\beta] < \infty$ for any $0 < \beta < \alpha$. The same arguments used in the proof of Theorem \ref{T.LinearRecursion} give that $E\left[ \sum_{i=1}^N C_i^\gamma \right] < \infty$ for some $0 \leq \gamma <  \alpha$. The statement of the theorem with the first expression for $H$ will follow from Theorem \ref{T.NewGoldie} once we prove that condition \eqref{eq:Goldie_condition} holds.  Define
$$R^* = \left( \bigvee_{i=1}^N C_i R_i \right) \vee Q.$$
Then, 
\begin{align*}
\left| P(R>t) - E\left[ \sum_{i=1}^N \Indicator(C_iR_i > t ) \right] \right|  &\leq \left| P(R > t) - P\left( \max_{1\leq i \leq N} C_i R_i > t \right) \right|   \\
&\hspace{5mm} + \left| P\left( \max_{1\leq i \leq N} C_i R_i > t  \right)   - E\left[ \sum_{i=1}^N \Indicator(C_iR_i > t ) \right]  \right|.
\end{align*}
Since $R \stackrel{\mathcal{D}}{=} R^* \geq \max_{1\leq i\leq N} C_i R_i$, the first absolute value disappears. The integral corresponding to the second term is finite by Lemma \ref{L.Max_Approx}, just as in the proof of Theorem~\ref{T.LinearRecursion}. To see that the integral corresponding to the first term, 
$$\int_0^\infty  \left( P(R > t) - P\left( \max_{1\leq i \leq N} C_i R_i > t \right) \right) t^{\alpha-1} \, dt, $$ 
is finite we proceed as in the proof of Theorem \ref{T.LinearRecursion}. First we use Fubini's Theorem to obtain that
\begin{align*}
&\int_0^\infty  \left( P(R > t) - P\left( \max_{1\leq i \leq N} C_i R_i > t \right) \right) t^{\alpha-1} \, dt  \\
&= \frac{1}{\alpha} E \left[ (R^*)^\alpha - \left( \max_{1\leq i \leq N} C_i R_i \right)^\alpha   \right]  \\
&= \frac{1}{\alpha} E \left[ \left( \bigvee_{i=1}^N C_i R_i \right)^\alpha \vee Q^\alpha - \left( \bigvee_{i=1}^N C_i R_i \right)^\alpha   \right] \\
&\leq \frac{E[Q^\alpha]}{\alpha}.
\end{align*}

Now, applying Theorem \ref{T.NewGoldie} gives 
$$P(R > t) \sim H t^{-\alpha},$$
where $H = \left( E\left[ \sum_{j=1}^N C_j^\alpha \log C_j  \right] \right)^{-1} \int_{0}^\infty v^{\alpha-1} \left( P(R > v) - E\left[ \sum_{j=1}^{N} \Indicator(C_{j} R > v ) \right]    \right) dv$. 

The same steps used in the proof of Theorem \ref{T.LinearRecursion} give the second expression for $H$.
\end{proof}


\section{Other recursions and concluding remarks}

As an illustration of the generality of the methodology presented in this paper, we mention in this section other recursions that fall within its scope. One example that is closely related to the recursions from Sections~\ref{S.LinearRec} and \ref{S.MaxRec} is the following
\begin{equation} \label{eq:Max_Sum_Rec}
R \stackrel{\mathcal{D}}{=} \left( \bigvee_{i=1}^N C_i R_i \right) + Q,
\end{equation}
where $(Q, N, C_1, C_2, \dots)$ is a nonnegative vector with $N \in \mathbb{N} \cup \{\infty\}$, $P(Q > 0) > 0$, and $\{R_i \}_{i \in \mathbb{N}}$ is a sequence of iid random variables independent of $(Q, N, C_1, C_2, \dots)$ having the same distribution as $R$.  Recursion \eqref{eq:Max_Sum_Rec} was termed ``discounted tree sums" in \cite{Aldo_Band_05}; for additional details on the existence and uniqueness of its solution see Section 4.4 in \cite{Aldo_Band_05}.

Similarly as in \cite{Goldie_91}, it appears that one could study other non-linear recursions on trees using implicit renewal theory. For example, one could analyze the solution to the equation 
$$R \stackrel{\mathcal{D}}{=} \sum_{i=1}^N \left( C_i R_i + B_i \sqrt{R_i} \right) + Q,$$
where $(Q, N, C_1, C_2, \dots)$ is a nonnegative vector with $N \in \mathbb{N} \cup \{\infty\}$, $P(Q > 0) > 0$, and $\{R, R_i \}_{i \geq 1}$ is a sequence of iid random variables independent of $(Q, N, C_1, C_2, \dots)$. Here, the primary difficulty would be in establishing the existence and uniqueness of the solution as well as the finiteness of moments.

\section{Proofs} \label{S.Proofs}

\subsection{Implicit renewal theorem on trees} \label{SS.ImplicitProofs}

We give in this section the proof of Lemma \ref{L.RenewalMeasure}. 

\begin{proof}[Proof of Lemma \ref{L.RenewalMeasure}]
Observe that the measure $E\left[ \sum_{i=1}^N \Indicator(\log C_i \in du, C_i>0)  \right]$ is nonarithmetic (nonlattice) by our assumption
since, if we assume to the contrary that it is lattice on a lattice set $L$, then on the complement $L^c$ of this set we have 
$$0=E\left[ \sum_{i=1}^N \Indicator(\log C_i \in L^c, C_i>0 )  \right]\ge P(\log C_j \in L^c, C_j>0,N \ge j)>0,$$ 
which is a contradiction.  Hence, $\eta$ is nonarithmetic as well, and it places no mass at $-\infty$ due to the exponential term $e^{\alpha u}$.
To see that it is a probability measure note that 
\begin{align*}
\int_{-\infty}^{\infty} \eta(du) &=  \int_{-\infty}^\infty e^{\alpha u}E\left[ \sum_{j=1}^N \Indicator(\log C_j \in du )  \right] \\
&= E\left[ \sum_{j=1}^N \int_{-\infty}^\infty e^{\alpha u} \Indicator(\log C_j \in du )  \right] \qquad \text{(by Fubini's Theorem)} \\
&=  E\left[ \sum_{j=1}^N C_j^\alpha \right] = 1.
\end{align*}  
Similarly, its mean is given by
$$\int_{-\infty}^\infty u \eta(du) = E\left[ \sum_{j=1}^N C_j^\alpha \log C_j  \right] .$$

To show that $\mu_n = \eta^{*n}$ we proceed by induction. Let $\mathcal{F}_n$ denote the $\sigma$-algebra generated by $\left\{ (N_{\bf i}, C_{({\bf i}, 1)}, C_{({\bf i}, 2)}, \dots) : {\bf i} \in A_j, 0 \leq j \leq n-1 \right\}$, $\mathcal{F}_0 = \sigma(\emptyset, \Omega)$, and for each ${\bf i} \in A_n$ set $V_{{\bf i}} = \log \boldC_{{\bf i}}$.  Hence, using this notation we derive
\begin{align*}
\mu_{n+1}((-\infty,t]) &= \int_{-\infty}^t e^{\alpha u} E\left[  \sum_{{\bf i} \in A_{n}} \sum_{j=1}^{N_{{\bf i}}} \Indicator(V_{{\bf i}} + \log C_{({\bf i},j)} \in du )   \right] \\
&= \int_{-\infty}^t e^{\alpha u} E\left[  \sum_{{\bf i} \in A_{n}} E\left[ \left. \sum_{j=1}^{N_{{\bf i}}} \Indicator(V_{{\bf i}} + \log C_{({\bf i},j)} \in du ) \right| \mathcal{F}_n \right]  \right] \\
&=  E\left[  \sum_{{\bf i} \in A_{n}} e^{\alpha V_{\bf i}} \int_{-\infty}^t e^{\alpha (u- V_{\bf i})} E\left[ \left. \sum_{j=1}^{N_{{\bf i}}} \Indicator( \log C_{({\bf i},j)} \in du - V_{\bf i} ) \right| \mathcal{F}_n \right]  \right] \\
&=E\left[  \sum_{{\bf i} \in A_{n}} e^{\alpha V_{\bf i}} \eta((-\infty, t - V_{{\bf i}}])  \right] \\
&= \int_{-\infty}^\infty  \eta((-\infty,t-v])  \mu_n(dv),
\end{align*}
where in the fourth equality we used the independence of $(N_{{\bf i}}, C_{({\bf i}, 1)}, C_{({\bf i}, 2)}, \dots)$ from $\mathcal{F}_n$. Therefore $\mu_{n+1}(dt) = (\eta*\mu_n)(dt)$ and the induction hypothesis gives the result.
\end{proof}


\subsection{Moments of $W_n$} \label{SS.MomentsProofs}

In this section we prove Lemmas \ref{L.Alpha_Moments}, \ref{L.MomentSmaller_1} and \ref{L.GeneralMoment}.  We also include a result that provides bounds for $E[W_n^p]$ for integer $p$, which will be used in the proof of Lemma \ref{L.GeneralMoment}. 

\begin{proof}[Proof of Lemma \ref{L.Alpha_Moments}]
Let $p = \lceil \beta \rceil \in \{2,3,\dots\}$ and $\gamma = \beta/p \in (\beta/(\beta+1), 1]$. Suppose first that $k \in \mathbb{N}$ and define  $A_p(k) = \{ (j_1, \dots, j_k) \in \mathbb{N}^k: j_1 + \dots + j_k = p, 0 \leq j_i < p\}$. Then, for any sequence of nonnegative numbers $\{ y_i \}_{i \geq 1}$ we have
\begin{align}
\left( \sum_{i=1}^k y_i \right)^\beta &= \left( \sum_{i=1}^k y_i \right)^{p \gamma} \notag \\
&= \left( \sum_{i=1}^k y_i^p + \sum_{(j_1,\dots,j_k) \in A_p(k)} \binom{p}{j_1,\dots,j_k} y_1^{j_1} \cdots y_k^{j_k} \right)^\gamma \notag  \\
&\leq \sum_{i=1}^k y_i^{p\gamma} + \left( \sum_{(j_1,\dots,j_k) \in A_p(k)} \binom{p}{j_1,\dots,j_k} y_1^{j_1} \cdots y_k^{j_k} \right)^\gamma, \label{eq:scalarBound}
\end{align}
where for the last step we used the well known inequality $\left( \sum_{i=1}^k x_i \right)^\gamma \leq \sum_{i=1}^k x_i^\gamma$ for $0 < \gamma \leq 1$ and $x_i \geq 0$. We now use the conditional Jensen's inequality to obtain 
\begin{align*}
&E\left[ \left( \sum_{i=1}^k C_i Y_i \right)^\beta - \sum_{i=1}^k (C_iY_i)^{\beta} \right] \\
&\leq E\left[  \left( \sum_{(j_1,\dots,j_k) \in A_p(k)} \binom{p}{j_1,\dots,j_k} (C_1Y_1)^{j_1} \cdots (C_k Y_k)^{j_k} \right)^\gamma \right] \qquad \text{(by \eqref{eq:scalarBound})} \\
&\leq    E\left[ \left(  E\left[ \left. \sum_{(j_1,\dots,j_k) \in A_p(k)} \binom{p}{j_1,\dots,j_k} (C_1Y_1)^{j_1} \cdots (C_k Y_k)^{j_k} \right| C_1,\dots, C_k \right] \right)^\gamma \right]  \\
&= E \left[ \left(  \sum_{(j_1,\dots,j_k) \in A_p(k)} \binom{p}{j_1,\dots,j_k} C_1^{j_1} \cdots C_k^{j_k} E\left[ \left. Y_1^{j_1} \cdots Y_k^{j_k} \right| C_1,\dots, C_k \right] \right)^\gamma  \right].
\end{align*}
Since $\{Y_i\}$ is a sequence of iid random variables having the same distribution as $Y$, independent of the $C_i$'s we have that
$$E\left[ \left. Y_1^{j_1} \cdots Y_k^{j_k} \right| C_1,\dots, C_k \right] = E\left[ Y_1^{j_1} \cdots Y_k^{j_k} \right]  = || Y ||_{j_1}^{j_1}  \cdots || Y ||_{j_k}^{j_k},$$
where $|| Y ||_\kappa = \left( E[|Y|^\kappa] \right)^{1/\kappa}$ for $\kappa \geq 1$ and $|| Y ||_0 \triangleq 1$. Since $|| Y ||_\kappa$ is increasing for $\kappa \geq 1$ it follows that $|| Y ||_{j_i}^{j_i} \leq || Y ||_{p-1}^{j_i}$. Hence
$$|| Y ||_{j_1}^{j_1} \cdots || Y ||_{j_k}^{j_k} \leq || Y ||_{p-1}^p,$$
which in turn implies that
\begin{align*}
E\left[ \left( \sum_{i=1}^k C_iY_i \right)^\beta - \sum_{i=1}^k (C_i Y_i)^{\beta} \right] &\leq E \left[ \left(  \sum_{(j_1,\dots,j_k) \in A_p(k)} \binom{p}{j_1,\dots,j_k} C_1^{j_1} \cdots C_k^{j_k} || Y ||_{p-1}^p \right)^\gamma  \right] \\
&= || Y ||_{p-1}^\beta E\left[ \left( \left(\sum_{i=1}^k C_i \right)^p - \sum_{i=1}^k C_i^p \right)^\gamma  \right]  \\
&\leq || Y ||_{p-1}^\beta E\left[ \left(\sum_{i=1}^k C_i \right)^\beta  \right].
\end{align*}
This completes the proof for $k$ finite. 

When $k = \infty$, first note that from the well known inequality $\left(  x_1 + x_2 \right)^\beta \geq x_1^\beta + x_2^\beta$ for $x_1, x_2 \geq 0$ and $\beta > 1$ we obtain the monotonicity in $k$ of the following difference 
$$\left( \sum_{i=1}^{k+1} C_i Y_i \right)^\beta - \sum_{i=1}^{k+1} (C_i Y_i)^\beta \geq  \left( \sum_{i=1}^{k} C_i Y_i \right)^\beta - \sum_{i=1}^k (C_i Y_i)^\beta \geq 0.$$
Hence,
\begin{align}
&E\left[ \left( \sum_{i=1}^\infty C_iY_i \right)^\beta - \sum_{i=1}^\infty (C_i Y_i)^{\beta} \right] \notag \\
&= \lim_{k\to \infty} E\left[  \left( \left( \sum_{i=1}^k C_iY_i \right)^\beta - \sum_{i=1}^k (C_i Y_i)^{\beta} \right) \right] \label{eq:Exchange1} \\
&\leq \lim_{k\to \infty} E\left[  \left( \sum_{(j_1,\dots,j_k) \in A_p(k)} \binom{p}{j_1,\dots,j_k} (C_1Y_1)^{j_1} \cdots (C_k Y_k)^{j_k} \right)^\gamma  \right]  \notag \\
&\leq \lim_{k \to \infty} || Y ||_{p-1}^\beta  E\left[ \left(\sum_{i=1}^k C_i \right)^\beta  \right]  \notag \\
&= || Y ||_{p-1}^\beta  E\left[ \left(\sum_{i=1}^\infty C_i \right)^\beta  \right] , \label{eq:Exchange2}
\end{align}
where \eqref{eq:Exchange1} and \eqref{eq:Exchange2} are justified by monotone convergence. 
\end{proof}

\begin{proof}[Proof of Lemma \ref{L.MomentSmaller_1}]
We use the well known inequality $\left( \sum_{i=1}^k y_i \right)^\beta \leq \sum_{i=1}^k y_i^\beta$ for $0 < \beta \leq 1$, $y_i \geq 0$, $k \leq \infty$, to obtain
\begin{align}
E[W_n^\beta] &= E\left[ \left( \sum_{i=1}^N C_i W_{(n-1),i} \right)^\beta \right] \notag \\
&\leq E\left[ \sum_{i=1}^N C_i^\beta  W_{(n-1),i}^\beta \right] \notag \\
&= E[W_{n-1}^\beta] \rho_\beta \qquad \text{(by conditioning on $N, C_i$ and Fubini's theorem)} \notag \\
&\leq \rho_\beta^{n} E[W_0^\beta] \qquad \text{(iterating $n$ times)} \notag \\
&= \rho_\beta^{n} E[Q^\beta] . 
\end{align}
\end{proof}

Before proving the moment inequality for  general $\beta > 1$, we will show first the corresponding result for integer moments.

\begin{lem} \label{L.IntegerMoment}
Let $p \in \{2, 3, \dots\}$ and recall that $\rho_p = E\left[ \sum_{i=1}^N C_i^p \right]$, $\rho \equiv \rho_1$. Suppose $E[Q^p]< \infty$, $E\left[ \left( \sum_{i=1}^N C_i \right)^p \right] < \infty$, and $\rho \vee \rho_p < 1$. Then, there exists a constant $K_p > 0$ such that
$$E[ W_n^p ] \leq K_p \left( \rho \vee \rho_p \right)^n $$
for all $n \geq 0$.  
\end{lem}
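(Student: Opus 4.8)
The plan is to prove the statement by induction on the integer $p$, using the distributional recursion \eqref{eq:WnRec}, i.e. $W_n \stackrel{\mathcal{D}}{=} \sum_{k=1}^N C_k W_{(n-1),k}$ with $\{W_{(n-1),k}\}$ iid copies of $W_{n-1}$ independent of $(N,C_1,C_2,\dots)$, together with a multinomial expansion. The base case $p=1$ is exactly Lemma~\ref{L.MomentSmaller_1}, which gives $E[W_n]\le E[Q]\rho^n=K_1(\rho\vee\rho_1)^n$ with $K_1=E[Q]$. For the inductive step I first record three consequences of the hypotheses: Lyapunov's inequality gives $E[Q^j]\le (E[Q^p])^{j/p}<\infty$ and $E[(\sum_{i=1}^N C_i)^j]\le (E[(\sum_{i=1}^N C_i)^p])^{j/p}<\infty$ for $1\le j\le p$; since $\theta\mapsto \sum_{i=1}^N C_i^\theta$ is convex for each realization, $\theta\mapsto\rho_\theta$ is a finite convex function on $[1,p]$, hence $\rho_j\le \rho_1\vee\rho_p=\rho\vee\rho_p=:r$ for every $j\in[1,p]$; consequently the hypotheses of the lemma hold at every index $j\in\{1,\dots,p-1\}$, so by the induction hypothesis (and Lemma~\ref{L.MomentSmaller_1} for $j=1$) there exist constants $K_j>0$ with $E[W_m^j]\le K_j r^m$ for all $m\ge0$. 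We may assume $r\in(0,1)$, since $r=0$ forces $\sum_{i=1}^N C_i=0$ a.s. and hence $W_n=0$ a.s. for $n\ge1$, making the claim trivial.

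Next, conditioning on $\mathcal{F}:=\sigma(N,C_1,C_2,\dots)$ and expanding (by monotone convergence when $N=\infty$, all terms being nonnegative) gives
\begin{align*}
E[W_n^p\mid\mathcal{F}] &= \Big(\sum_{k=1}^N C_k^p\Big)E[W_{n-1}^p]+S_n, \\
S_n &:= \sum_{\substack{j_1+\cdots+j_N=p\\ 0\le j_k\le p-1}} \binom{p}{j_1,\dots,j_N}\prod_k C_k^{j_k}\prod_{k:\,j_k\ge1}E[W_{n-1}^{j_k}],
\end{align*}
where the first term collects the ``diagonal'' index vectors ($j_k=p$ for one $k$) and $S_n$ the remainder; note every index vector contributing to $S_n$ has at least two nonzero components, and the lower-order moments appearing in $S_n$ are all finite. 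Taking expectations yields the identity $a_n=\rho_p\,a_{n-1}+E[S_n]$, where $a_n:=E[W_n^p]$ and $a_0=E[Q^p]<\infty$. To control $E[S_n]$, use $E[W_{n-1}^{j_k}]\le K_{j_k}r^{n-1}$ and the crude bound $r^{\#\{k:\,j_k\ge1\}}\le r^2$ (valid since $r<1$ and there are at least two nonzero components) to get $\prod_{k:\,j_k\ge1}E[W_{n-1}^{j_k}]\le \tilde K\,r^{2(n-1)}$ with $\tilde K:=(1\vee\max_{1\le j\le p-1}K_j)^p$; factoring this uniform quantity out and completing the remaining sum over index vectors to the full multinomial sum,
$$S_n\le \tilde K\,r^{2(n-1)}\sum_{j_1+\cdots+j_N=p}\binom{p}{j_1,\dots,j_N}\prod_k C_k^{j_k}=\tilde K\,r^{2(n-1)}\Big(\sum_{k=1}^N C_k\Big)^p,$$
whence $E[S_n]\le \tilde K B\,r^{2(n-1)}$ with $B:=E[(\sum_{i=1}^N C_i)^p]<\infty$.

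Finally, since $a_0<\infty$ and $E[S_n]<\infty$, the identity $a_n=\rho_p a_{n-1}+E[S_n]$ shows inductively that $a_n<\infty$ for all $n$; iterating it and using $\rho_p\le r<1$,
$$a_n=\rho_p^{\,n}a_0+\sum_{k=0}^{n-1}\rho_p^{\,n-1-k}E[S_{k+1}]\le r^n E[Q^p]+\tilde K B\sum_{k=0}^{n-1}r^{n-1-k}r^{2k}\le r^n\Big(E[Q^p]+\frac{\tilde K B}{r(1-r)}\Big),$$
which is the asserted bound with $K_p:=E[Q^p]+\tilde K B/(r(1-r))>0$. The only genuinely delicate points are the justification of the multinomial expansion together with the interchange of summation and conditional expectation when $N$ may be infinite (handled by monotone convergence, since everything is nonnegative), and the verification that the intermediate exponents $1\le j\le p-1$ fall under the hypotheses of the lemma, which is exactly where the convexity of $\theta\mapsto\rho_\theta$ is needed; the remaining estimates are elementary.
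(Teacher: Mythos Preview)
Your proof is correct and follows the same overall scheme as the paper's: induction on $p$, separation of the diagonal term $\rho_p\,E[W_{n-1}^p]$ from the off-diagonal contribution in the recursion $W_n\stackrel{\mathcal D}{=}\sum_k C_k W_{(n-1),k}$, a geometric bound on the off-diagonal part, and iteration of the resulting one-step inequality. Where you differ is in how the off-diagonal piece is controlled. The paper does not expand the multinomial directly; instead it invokes Lemma~\ref{L.Alpha_Moments} (a conditional-Jensen argument) to obtain
\[
E\Big[\Big(\sum_i C_i W_{(n-1),i}\Big)^p-\sum_i C_i^p W_{(n-1),i}^p\Big]\le \big(E[W_{n-1}^{p-1}]\big)^{p/(p-1)}E\Big[\Big(\sum_i C_i\Big)^p\Big],
\]
so only the $(p-1)$st moment (ordinary induction) enters, and the exponent governing the off-diagonal is $(\rho\vee\rho_{p-1})^{(n-1)p/(p-1)}$. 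You instead carry out the full multinomial expansion, bound each lower-order factor by the strong induction hypothesis $E[W_{n-1}^{j}]\le K_j r^{n-1}$, and use the fact that at least two factors are present to extract $r^{2(n-1)}$. Your route is more elementary (it bypasses Lemma~\ref{L.Alpha_Moments} entirely) and handles all $p\ge 2$ uniformly, at the cost of strong rather than ordinary induction and a cruder constant $\tilde K$. The paper's route is tidier and reuses the general moment inequality it needs elsewhere. Both land on the same recursion-and-iteration endgame; your justification of the $N=\infty$ expansion via monotone convergence and of $\rho_j\le r$ via convexity of $\theta\mapsto\rho_\theta$ is sound.
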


\begin{proof}
We will give an induction proof in $p$. For $p = 2$ we have
\begin{align*}
E[W_n^2] &= E\left[ \left( \sum_{i=1}^N C_i W_{(n-1),i} \right)^2  \right] \\
&= E\left[ \sum_{i=1}^N C_i^2 W_{(n-1),i}^2 + \sum_{i \neq j} C_i W_{(n-1),i} C_j W_{(n-1),j}  \right] \\
&= E[W_{n-1}^2] E\left[ \sum_{i=1}^N C_i^2 \right] + \left( E[W_{n-1}] \right)^2 E\left[ \sum_{i \neq j} C_i  C_j  \right] \\
&\hspace{4.5cm} \text{(by conditioning on $N, C_i$ and Fubini's theorem)} \\
&\leq \rho_2 E[W_{n-1}^2] + E\left[ \left( \sum_{i=1}^N C_i \right)^2 \right] \left( E[W_{n-1}] \right)^2 .
\end{align*}

Using the preceding recursion and noting that,
$$E[W_{n-1}] = \rho^{n-1} E[Q],$$
we obtain
\begin{equation} \label{eq:2_recur}
E[W_n^2] \leq \rho_2 E[W_{n-1}^2] + K \rho^{2(n-1)},
\end{equation}
where $K =E\left[ \left( \sum_{i=1}^N C_i \right)^2 \right] \left( E[Q] \right)^2$. Now, iterating \eqref{eq:2_recur} gives
\begin{align*}
E[W_n^2] &\leq \rho_2 \left( \rho_2 E[W_{n-2}^2] +  K \rho^{2(n-2)} \right) +  K \rho^{2(n-1)} \\
&\leq \rho_2^{n-1} \left( \rho_2 E[W_{0}^2] + K  \right) + K \sum_{i=0}^{n-2} \rho_2^i \, \rho^{2(n-1-i)} \\
&= \rho_2^n E[Q^2] + K  \sum_{i=0}^{n-1} \rho_2^i \, \rho^{2(n-1-i)}  \\
&\leq (\rho_2 \vee \rho)^n E[Q^2] + K (\rho_2 \vee \rho)^n \sum_{i=0}^{n-1} (\rho_2 \vee \rho)^{n-2 - i  }  \\
&\leq \left( E[Q^2] + \frac{K}{\rho_2 \vee \rho} \sum_{j=0}^{\infty} (\rho_2 \vee \rho)^{j}  \right) (\rho_2 \vee \rho)^n \\
&= K_2 (\rho_2 \vee \rho)^n ,
\end{align*}
which completes the case $p = 2$. 

Suppose now that there exists a constant $K_{p-1} > 0$ such that
\begin{equation} \label{eq:Induction_p}
E[W_n^{p-1}] \leq K_{p-1} \left( \rho_{p-1} \vee \rho \right)^n
\end{equation}
for all $n \geq 0$. Then, by Lemmas \ref{L.Alpha_Moments} and \ref{L.MomentSmaller_1}, we have
\begin{align*}
E[W_n^p] &= E\left[ \left( \sum_{i=1}^N C_i W_{(n-1),i} \right)^p - \sum_{i=1}^N C_i^p W_{(n-1),i}^p \right] + E\left[  \sum_{i=1}^N C_i^p W_{(n-1),i}^p \right] \\
&\leq \left( E\left[ W_{n-1}^{p-1} \right] \right)^{p/(p-1)} E\left[ \left( \sum_{i=1}^N C_i \right)^p \right] + E\left[  \sum_{i=1}^N C_i^p W_{(n-1),i}^p \right] \\
&= \left( E\left[ W_{n-1}^{p-1} \right] \right)^{p/(p-1)} E\left[ \left( \sum_{i=1}^N C_i \right)^p \right] + \rho_p E\left[ W_{n-1}^{p} \right] \\
&\leq  E\left[ \left( \sum_{i=1}^N C_i \right)^p \right]  (K_{p-1})^{p/(p-1)} (\rho_{p-1} \vee \rho)^{(n-1)p/(p-1)} + \rho_p E[W_{n-1}^p],
\end{align*}
where in the second equality we conditioned on $N, C_i$ and used Fubini's theorem, and the last inequality corresponds to the induction hypothesis. We then obtain the recursion
\begin{equation} \label{eq:p_recur}
E[W_n^p] \leq \rho_p E[W_{n-1}^p] + K (\rho_{p-1} \vee \rho)^{\frac{(n-1)p}{p-1}},
\end{equation}
where $K = E\left[ \left( \sum_{i=1}^N C_i \right)^p \right]  (K_{p-1})^{p/(p-1)}$. Iterating \eqref{eq:p_recur} as for the case $p=2$ gives
\begin{align}
E[W_n^p] &\leq \rho_p^n E[Q^p] + K  \sum_{i=0}^{n-1} \rho_p^i \, (\rho_{p-1} \vee \rho)^{(n-1-i)p/(p-1)} \notag \\
&\leq (\rho_p \vee \rho)^n E[Q^p] + K \sum_{i=0}^{n-1} (\rho_p \vee \rho)^{((n-1)p -i)/(p-1) } \label{eq:MGFconvexity}  \\
\phantom{E[W_n^p]}&= (\rho_p \vee \rho)^n E[Q^p] + K (\rho_p \vee \rho)^{n-1}   \sum_{i=0}^{n-1} (\rho_p \vee \rho)^{(n- i - 1)/(p-1)}  \notag  \\
&\leq  \left( E[Q^p] + K (\rho_p \vee \rho)^{-1}   \sum_{j=0}^{\infty} (\rho_p \vee \rho)^{\frac{j}{p-1}} \right) (\rho_p \vee \rho)^n \notag \\
&= K_p (\rho_p \vee \rho)^n, \notag
\end{align}
where in \eqref{eq:MGFconvexity} we used the convexity of $\varphi(\beta) = \rho_\beta$, i.e. $\rho_{p-1} = \varphi(p-1) \leq \varphi(1) \vee \varphi(p) = \rho \vee \rho_p$. 
\end{proof}

The proof for the general $\beta$-moment, $\beta > 1$, is given below.

\begin{proof}[Proof of Lemma \ref{L.GeneralMoment}]
Set $p = \lceil \beta \rceil \geq \beta > 1$. Then, by Lemmas \ref{L.Alpha_Moments} and \ref{L.MomentSmaller_1}, 
\begin{align*}
E[W_n^\beta] &= E\left[ \left( \sum_{i=1}^N C_i W_{(n-1),i} \right)^\beta - \sum_{i=1}^N C_i^\beta W_{(n-1),i}^\beta \right] + E\left[  \sum_{i=1}^N C_i^\beta W_{(n-1),i}^\beta \right] \\
&\leq \left( E\left[ W_{n-1}^{p-1} \right] \right)^{\beta/(p-1)} E\left[ \left( \sum_{i=1}^N C_i \right)^\beta \right] + E\left[  \sum_{i=1}^N C_i^\beta W_{(n-1),i}^\beta \right] \\
&= \left( E\left[ W_{n-1}^{p-1} \right] \right)^{\beta/(p-1)} E\left[ \left( \sum_{i=1}^N C_i \right)^\beta \right] + \rho_\beta E\left[ W_{n-1}^{\beta} \right]   .
\end{align*}
By Lemma \ref{L.IntegerMoment}, 
\begin{align*}
E[W_n^\beta] &\leq  \rho_\beta E[W_{n-1}^\beta] + E\left[ \left( \sum_{i=1}^N C_i \right)^\beta \right]  (K_{p-1})^{\beta/(p-1)} (\rho_{p-1} \vee \rho)^{(n-1)\beta/(p-1)} \\
&= \rho_\beta E[ W_{n-1}^\beta] + K (\rho_{p-1} \vee \rho)^{(n-1)\gamma},
\end{align*}
where $\gamma = \beta/(p-1) > 1$. Finally, iterating the preceding bound $n-1$ times gives
\begin{align*}
E[W_n^\beta] &\leq \rho_\beta^n E[W_0^\beta] + K \sum_{i=0}^{n-1} \rho_\beta^i (\rho \vee \rho_{p-1})^{\gamma(n-1-i)} \\
&\leq E[W_0^\beta]  (\rho \vee \rho_\beta)^n + K  \sum_{i=0}^{n-1} (\rho \vee \rho_\beta)^{\gamma(n-1-i) + i} \qquad \text{(by convexity of $\varphi(\beta) = \rho_\beta$)} \\
&= E[Q^\beta] (\rho \vee \rho_\beta)^n + K (\rho \vee \rho_\beta)^{n-1} \sum_{i=0}^{n-1} (\rho \vee \rho_\beta)^{(\gamma-1) i} \\
&\leq K_\beta (\rho \vee \rho_\beta)^n .
\end{align*}
This completes the proof. 
\end{proof}

\subsection{Linear nonhomogeneous recursion} \label{SS.LinearProofs}

In this section we give the proofs of the technical Lemmas \ref{L.Max_Approx} and \ref{L.ExtraQ} for the linear recursion. 

\begin{proof}[Proof of Lemma \ref{L.Max_Approx}]
Note that the integral is positive since 
\begin{align*}
P\left( \max_{1\leq i \leq N} C_i R_i > t \right) = E\left[   \Indicator\left(  \max_{1\leq i \leq N} C_i R_i > t \right)  \right] &\leq E\left[   \sum_{i=1}^N \Indicator\left( C_i R_i > t  \right)  \right] .
\end{align*}

To see that the integral is equal to the expectation involving the $\alpha$-moments note that
\begin{align*}
&\int_{0}^\infty \left( E\left[ \sum_{i=1}^N \Indicator(C_i R_i > t) \right] - P\left( \max_{1\leq i \leq N} C_i R_i > t \right) \right)  t^{\alpha -1} \, dt \\
&= \int_0^\infty E\left[ \sum_{i=1}^N \Indicator(C_i R_i > t) - \Indicator\left(\max_{1\leq i \leq N} C_i R_i > t \right)   \right]  t^{\alpha -1} \, dt \\
&= E\left[  \int_0^\infty \left(   \sum_{i=1}^N  \Indicator(C_i R_i > t)  - \Indicator\left( \max_{1\leq i \leq N} C_i R_i > t \right) \right)  t^{\alpha -1} \, dt   \right] \qquad \text{(by Fubini's theorem)} \\
&= E\left[   \sum_{i=1}^N  \frac{1}{\alpha} (C_i R_i)^{\alpha}  - \frac{1}{\alpha} \left(\max_{1\leq i \leq N} C_i R_i \right)^{\alpha}  \right] ,
\end{align*}
where the last equality is justified by the assumption that $\sum_{i=1}^N (C_i R_i)^\alpha < \infty$ a.s.

It now remains to show that the integral (expectation) is finite. To do this let ${\bf X} = (N, C_1, C_2, \dots)$. Similar arguments to those used above give
\begin{align*}
&\int_{0}^\infty \left( E\left[ \sum_{i=1}^N \Indicator(C_i R_i > t ) \right] - P\left( \max_{1\leq i \leq N} C_i R_i > t \right) \right)  t^{\alpha -1} \, dt \\
&= \int_0^\infty E\left[ E\left[ \left. \sum_{i=1}^N \Indicator(C_i R_i > t) - \Indicator\left(\max_{1\leq i \leq N} C_i R_i > t \right) \right| {\bf X} \right]  \right]   t^{\alpha -1} \, dt \\
&= E\left[  \int_0^\infty E\left[ \left. \sum_{i=1}^N \Indicator(C_i R_i > t) - \Indicator\left(\max_{1\leq i \leq N} C_i R_i > t \right) \right| {\bf X} \right]     t^{\alpha -1} \, dt    \right], 
\end{align*}
where in the last step we used Fubini's theorem. Furthermore,
\begin{align*}
&E\left[ \left. \sum_{i=1}^N \Indicator(C_i R_i > t) - \Indicator\left(\max_{1\leq i \leq N} C_i R_i > t \right) \right| {\bf X} \right]   \\
&= E\left[ \left. \Indicator\left( \max_{1\leq i \leq N} C_i R_i \leq t \right) \right|  {\bf X} \right] - 1 + \sum_{i=1}^N  E\left[ \Indicator(C_i R_i > t) | {\bf X}\right]. 
\end{align*}

Note that given ${\bf X}$, the random variables $C_i R_i$ are independent (since the $R$'s are), so if we let $\overline{F}(t) = P(R > t)$, then
$$E\left[ \left. \Indicator\left( \max_{1\leq i \leq N} C_i R_i \leq t \right) \right|  {\bf X} \right] = \prod_{i=1}^N E\left[ \left.  \Indicator\left( C_i R_i \leq t \right) \right|  {\bf X} \right] = \prod_{i=1}^N \left(1 - \overline{F}(t/C_i) \right).$$
We now use the inequality $1 - x \leq e^{-x}$ for $x \geq 0$ to obtain
$$\prod_{i=1}^N \left(1 - \overline{F}(t/C_i) \right) \leq e^{-\sum_{i=1}^N \overline{F}(t/C_i)}.$$
Next, let $\delta = \alpha\epsilon/(1+\epsilon)$ and set $\beta = \alpha-\delta$. By Markov's inequality,
$$\sum_{i=1}^N \overline{F}(t/C_i) \leq \sum_{i=1}^N E[(C_i R)^\beta | C_i] t^{-\beta} = t^{-\beta} E[R^\beta] \sum_{i=1}^N C_i^\beta.$$

Now, define the function $g(x) = e^{-x} - 1+ x$ and note that $g(x)$ is increasing for $x \geq 0$. Therefore, 
$$g\left( \sum_{i=1}^N \overline{F}(t/C_i) \right) \leq g\left(t^{-\beta} E[R^\beta] \sum_{i=1}^N C_i^\beta \right).$$
This observation combined with the previous derivations gives
\begin{align*}
&\int_0^\infty  E\left[ \left. \sum_{i=1}^N \Indicator(C_i R_i > t) - \Indicator\left(\max_{1\leq i \leq N} C_i R_i > t \right) \right| {\bf X} \right]    t^{\alpha -1} \, dt  \\
&\leq \int_0^\infty \left( e^{-r S_\beta t^{-\beta}} - 1 + r S_\beta t^{-\beta}  \right) t^{\alpha-1} dt,
\end{align*}
where $S_\beta = \sum_{i=1}^N C_i^\beta$ and $r = E[R^\beta] < \infty$. Hence, using the change of variables $u =r S_\beta t^{-\beta}$ gives
\begin{align*}
\int_{0}^\infty \left( e^{-rS_\beta t^{-\beta}}  - 1 + r S_\beta t^{-\beta} \right)  t^{\alpha -1} \, dt &= \beta^{-1} (r S_\beta)^{\alpha/\beta} \int_0^\infty \left( e^{-u} - 1 + u \right) u^{-\alpha/\beta  -1} \, du.
\end{align*}
Our choice of $\beta = \alpha-\delta$ guarantees that $1 < \alpha/\beta = 1+\epsilon < 2$. To see that the (non-random) integral is finite note that
$e^{-x} -1 + x \leq x^2/2$ and $e^{-x} - 1 \leq 0$ for any $x \geq 0$, implying
\begin{align*}
\int_0^\infty \left( e^{-u} - 1 + u \right) u^{-\alpha/\beta  -1} \, du &\leq \frac{1}{2} \int_0^1  u^{1-\alpha/\beta} \, du + \int_1^\infty  u^{-\alpha/\beta } \, du \\
&= \frac{1}{2(2-\alpha/\beta)} + \frac{1}{\alpha/\beta-1} \triangleq K \beta < \infty.
\end{align*}
Now, it follows that
\begin{align*}
&\int_{0}^\infty \left( E\left[ \sum_{i=1}^N \Indicator(C_i R_i > t) \right] - P\left( \max_{1\leq i \leq N} C_i R_i > t \right) \right)  t^{\alpha -1} \, dt  \\
&\leq E\left[  K (r S_\beta)^{\alpha/\beta}  \right] = K  r^{\alpha/\beta} E\left[ \left( \sum_{i=1}^N C_i^\beta  \right)^{\alpha/\beta}  \right] .
\end{align*}
The last expectation is finite by assumption ($\alpha/\beta = 1 + \epsilon$), which completes the proof. 
\end{proof}

\begin{proof}[Proof of Lemma \ref{L.ExtraQ}]
Let $S = \sum_{i=1}^N  C_i R_i < \infty$ a.s., $p = \lceil \alpha \rceil$ and note that $1 \leq p-1 < \alpha$. Then, since $(S+Q)^\alpha - S^\alpha \geq 0$ and $S^\alpha - \sum_{i=1}^N (C_i R_i)^\alpha \geq 0$, we can break the expectation as follows
\begin{align*}
E \left[ (S+Q)^\alpha - \sum_{i=1}^N  \left(C_i R_i \right)^\alpha   \right] &= E\left[ (S+Q)^\alpha - S^\alpha \right] + E\left[ \left( \sum_{i=1}^N C_i R_i \right)^\alpha - \sum_{i=1}^N (C_iR_i)^\alpha \right] \\
&\leq E\left[ (S+Q)^\alpha - S^\alpha \right]  + \left( E[ R^{p-1} ] \right)^{\alpha/(p-1)}  E\left[ \left(\sum_{i=1}^N C_i \right)^\alpha  \right] ,
\end{align*}
where the inequality is justified by Lemma \ref{L.Alpha_Moments}.  The second expectation is finite since by assumption $E[R^\beta] < \infty$ for any $0 <\beta < \alpha$. For the first expectation we use the inequality 
$$(x+t)^\kappa \leq \begin{cases}
x^\kappa + t^\kappa, & 0 < \kappa \leq 1, \\
x^\kappa + \kappa (x+t)^{\kappa-1} t, & \kappa > 1,
\end{cases}$$
for any $x,t \geq 0$. We apply the second inequality $p-1$ times and then the first one to obtain 
\begin{align*}
(x+t)^\alpha \leq x^\alpha + \alpha (x+t)^{\alpha-1} t  \leq \dots &\leq x^\alpha + \sum_{i=1}^{p-2} \alpha^i x^{\alpha-i} t^i + \alpha^{p-1}  (x+t)^{\alpha-p+1} t^{p-1}  \\
&\leq x^\alpha + \alpha^p t^\alpha + \alpha^p\sum_{i=1}^{p-1} x^{\alpha-i} t^i.
\end{align*}
Hence, it follows that
\begin{equation} \label{eq:Alpha_diff}
E\left[(S+Q)^\alpha - S^\alpha\right] \leq \alpha^p E[Q^\alpha] + \alpha^p\sum_{i=1}^{p-1} E[S^{\alpha-i} Q^i].
\end{equation}

To see that each of the expectations involving a product of $S$ and $Q$ is finite let ${\bf X} = (Q, N, C_1, C_2, \dots)$ and note that for $i = p-1$,
\begin{align} 
&E[S^{\alpha-p+1} Q^{p-1}] \\
&= E\left[ E\left[ \left. \left(  Q^{(p-1)/(\alpha-p+1)} \sum_{j=1}^N C_j R_j \right)^{\alpha-p+1}  \right|  {\bf X} \right]  \right] \notag \\
&\leq E\left[  \left(  E\left[ \left. Q^{(p-1)/(\alpha-p+1)} \sum_{j=1}^N C_j R_j   \right|  {\bf X} \right]  \right)^{\alpha-p+1} \right] \quad \text{(by Jensen's inequality)} \notag \\
&= (E[ R ])^{\alpha-p+1} E\left[  Q^{p-1} \left( \sum_{j=1}^N C_j    \right)^{\alpha-p+1} \right] , \label{eq:ProdMoments_1}
\end{align}
where the last equality was obtained by using the independence of $\{ R_j\}$ and ${\bf X}$. 

For $1 \leq i \leq p-2$ let $q_i = \lceil \alpha-i \rceil$ and condition on $Q$ and ${\bf X}$, respectively, to obtain
\begin{align}
E[S^{\alpha-i} Q^i] &= E\left[ \left( S^{\alpha-i} - \sum_{j=1}^N (C_j R_j)^{\alpha-i} \right) Q^i \right] + E\left[ Q^i \sum_{j=1}^N (C_j R_j)^{\alpha-i}  \right] \notag \\
&= E\left[ Q^i E\left[ \left. S^{\alpha-i} - \sum_{j=1}^N (C_j R_j)^{\alpha-i} \right| Q \right] \right] + E[ R^{\alpha-i}] E\left[ Q^i \sum_{j=1}^N C_j^{\alpha-i}  \right] \notag 
\end{align}
\begin{align}
&\leq E\left[ Q^i \left( E[R^{q_i - 1} | Q] \right)^{\frac{\alpha-i}{q_i - 1}} E\left[ \left. \left( \sum_{j=1}^N C_j \right)^{\alpha-i} \right| Q \right] \right] \\
&\hspace{5mm} + E[ R^{\alpha-i}] E\left[ Q^i \left( \sum_{j=1}^N C_j \right)^{\alpha-i}  \right] \notag \\
&= \left( \left( E[R^{q_i - 1}] \right)^{\frac{\alpha-i}{q_i - 1}} + E[R^{\alpha-i}] \right) E\left[    Q^i \left( \sum_{j=1}^N C_j \right)^{\alpha-i}  \right] , \label{eq:ProdMoments_2}
\end{align}
where for the inequality we used Lemma \ref{L.Alpha_Moments} ($\alpha-i > 1$) and the inequality $\sum_{i=1}^k y_i^\beta \leq \left( \sum_{i=1}^k y_i \right)^\beta$ for any $\beta \geq 1$ and $y_i \geq 0$.  Note that all the expectations involving $R$ in \eqref{eq:ProdMoments_1} and \eqref{eq:ProdMoments_2} are finite since $E[R^\beta] < \infty$ for all $0 < \beta < \alpha$ by assumption. Next, observe that all the other expectations are of the form $E\left[ Q^i \left( \sum_{j=1}^N C_j \right)^{\alpha-i} \right]$ for $1 \leq i \leq p-1$.  To see that these are finite use H\"older's inequality with $q = \alpha/(\alpha-i)$ and $r = \alpha/i$ to obtain
\begin{align*}
E\left[ Q^i \left( \sum_{j=1}^N C_j \right)^{\alpha-i} \right] &\leq \left|\left|\left( \sum_{j=1}^N C_j \right)^{\alpha-i} \right|\right|_q ||Q^i||_r \\
&= \left( E\left[ \left( \sum_{j=1}^N C_j \right)^{\alpha}   \right] \right)^{1/q} \left( E \left[ Q^\alpha \right] \right)^{1/r} < \infty.
\end{align*}
\end{proof}

\section*{Acknowledgements}
We would like to thank an anonymous referee and Matthias Meiners for their helpful comments.

\end{document}